\def\abs#1{|#1|}
\def\shuffle{\mathop{_{^{\sqcup\!\sqcup}}}}
\def\adots{\mathinner{\mkern2mu\raise1pt\hbox{.}
\mkern3mu\raise4pt\hbox{.}\mkern1mu\raise7pt\hbox{.}}}
\def\up#1{\raise 1ex\hbox{\footnotesize#1}}
\def\H{\mathcal{H}}
\def\span{\mathop\mathrm{span}\nolimits}
\def\path{\rightsquigarrow}
\def\Lyn{{\mathcal Lyn}}
\def\N{{\mathbb N}}
\def\C{{\mathbb C}}
\def\R{{\mathbb R}}
\def\Z{{\mathbb Z}}
\def\Q{{\mathbb Q}}
\def\Q{{\mathbb Q}}
\def\L{\mathrm{L}}
\def\H{\mathrm{H}}
\def\CONV{\mathrm{\tiny CONV }}
\def\gDIV{\mathrm{\tiny gDIV }}
\renewcommand{\Z}{{\mathbb Z}}
\renewcommand{\Q}{{\mathbb Q}}
\renewcommand{\R}{{\mathbb R}}
\renewcommand{\C}{{\mathbb C}}
\newcommand{\calD}{{\mathcal D}}
\newcommand{\calH}{{\mathcal H}}
\newcommand{\calZ}{{\mathcal Z}}
\newcommand{\calL}{{\mathcal L}}
\newcommand{\calM}{{\mathcal M}}
\newcommand{\calX}{{\mathcal X}}
\newcommand{\calU}{{\mathcal U}}
\newcommand{\Li}{\operatorname{Li}}
\def\QX{\mathbb{Q}\langle X \rangle}
\def\pol#1{\langle #1 \rangle}
\def\Lyn{\mathcal Lyn}
\def\calZ{\mathcal{Z}}
\def\L{\mathrm{L}}
\def\H{\mathrm{H}}
\def\abs#1{|#1|}
\def\scal#1#2{\langle #1\mid#2 \rangle}
\def\ncp#1#2{#1\langle #2\rangle}
\def\ncs#1#2{#1\langle \!\langle #2\rangle \!\rangle}
\def\path{\rightsquigarrow}
\def\pol#1{\langle #1 \rangle}
\def\shuffle{\mathop{_{^{\sqcup\!\sqcup}}}} 
\def\conc{\mathop{\tt conc}} 
\def\calH{{\mathcal H}}
\def\calL{{\mathcal L}}
\def\calP{{\mathcal P}}
\def\calQ{{\mathcal Q}}
\def\calR{{\mathcal R}}
\def\calX{{\mathcal X}}
\def\calZ{{\mathcal Z}}
\def\scal#1#2{\langle #1 | #2 \rangle}
\def\ncs#1#2{#1\langle\langle #2\rangle\rangle}
\def\ncp#1#2{#1\langle #2\rangle}
\def\Li{\mathrm{Li}}
\gdef\stuffle{\;%
  \setlength{\unitlength}{0.0125cm}%
  \begin{picture}(20,10)(220,580)
  \thinlines
  \put(220,592){\line( 0,-1){ 10}}
  \put(220,582){\line( 1, 0){ 20}}
  \put(240,582){\line( 0, 1){ 10}}
  \put(230,592){\line( 0,-1){ 10}}
  \put(225,587){\line( 1, 0){ 10}}
  \end{picture}\;
}
\def\pol#1{\langle #1 \rangle}
\newcommand\rsmraise[1]{%
  \ifx#1\displaystyle .8\else
    \ifx#1\textstyle .8\else
      \ifx#1\scriptstyle .6\else
        .45%
      \fi
    \fi
  \fi}
\def\2#1{\ifnum#1<10 0\fi\the#1}
\xdef\isodayandtime{
{\2\day-\2\month-\the\year\space\2{\count0}:%
\2{\count2}}}
\newcommand{\hooklongtwoheadrightarrow}{\lhook\joinrel\longtwoheadrightarrow}
\newcommand*{\longtwoheadrightarrow}{\ensuremath{\joinrel\relbar\joinrel\twoheadrightarrow}}
\newcommand{\xhookdoubleheadrightarrow}[2][]{%
  \lhook\joinrel
  \ext@arrow 0359\rightarrowfill@ {#1}{#2}%
  \mathrel{\mspace{-15mu}}\rightarrow
}
\def\2#1{\ifnum#1<10 0\fi\the#1}
\xdef\isodayandtime{
{\2\day-\2\month-\the\year\space\2{\count0}:%
\2{\count2}}}
\newcounter{per1}
\definecolor{MyDarkBlue}{rgb}{0,0.08,0.4}
\begin{document}
\title{On the Algebraic Bases of Polyzetas}
%
%
\author{V. Hoang Ngoc Minh}
\authorrunning{V. Hoang Ngoc Minh}
%
\institute{University of Lille, 1 Place D\'eliot, 59024 Lille, France,
\email{vincel.hoang-ngoc-minh@univ-lille.fr}}
\maketitle              
\begin{abstract}
Two confluent rewriting systems in noncommutatives polynomials are constructed using the equations allowing the identification of the local coordinates (of second kind) of the graphs of the $\zeta$ polymorphism as being (shuffle or quasi-shuffle) characters and bridging two algebraic structures of polyzetas.
In each system, the left side of each rule corresponds to the leading monomial of the associated homogeneous in weight noncommutative polynomial while the right side is canonically represented on the $\Q$-algebra generated by irreducible terms:
\begin{itemize}
\item these polynomials, being totally lexicographically ordered, generate the kernels of the \textit{surjective} $\zeta$ polymorphism,

\item the images of these irreducible terms, by this $\zeta$ polymorphism, constitute $\Q$-algebraic bases of the $\Q$-algebra of polyzetas.
\end{itemize}
This means that the $\Q$-free algebra of polyzetas is graded and the irreducible polyzetas are transcendental numbers, $\Q$-algebraically independent, and then $\pi^2$ is $\Q$-algebraically independent on odd zeta values $\{\zeta(2q+1)\}_{1\le q\le 5}$ (so does $\pi$).

\keywords{Polylogarithms, Hamonic Sums, Polyzetas, Rewritting Systems.}
\end{abstract}

\section{Introduction}\label{introduction}
For any $r\ge1$ and $(s_1,\ldots,s_r)\in\N_{\ge1}$, for any $z\in\widetilde{\C\setminus\{0,1\}}$ and $n\ge1$, let
\begin{eqnarray}\label{def1def2}
\Li_{s_1,\ldots,s_r}(z):=\sum_{n_1>\ldots>n_r>0}\frac{z^{n_1}}{n_1^{s_1}\ldots n_r^{s_r}}&\mbox{and}&
\H_{s_1,\ldots,s_r}(n):=\sum_{n\ge n_1>\ldots>n_r>0}\frac1{n_1^{s_1}\ldots n_r^{s_r}}.
\end{eqnarray}
which are respectively called \textit{(multiple) polylogarithm} and\footnote{The harmonic sums are viewed as arithmetic function $\H_{s_1,\ldots,s_r}:\N\longmapsto\Q$. For any $n\ge0,r\ge1$ and $(s_1,\ldots,s_r)\in\N_{\ge1}$, the harmonic sums $\H_{s_1,\ldots,s_r}(n)$ is also called finte polyzeta.} \textit{(multiple) harmonic sum}. They lie by the following identity
\begin{eqnarray}\label{gs}
(1-z)^{-1}\Li_{s_1,\ldots,s_r}(z)=\sum_{n\ge1}\H_{s_1,\ldots,s_r}(n)z^n.
\end{eqnarray}
Let $\calH_r$ be $\{(s_1,\ldots,s_r)\in\N_{\ge1}^r,s_1>1\}$.
Then, for any $(s_1,\ldots,s_r)\in\calH_r$ the following real limits exist and coincid, by a Abel's theorem, with the so-called \textit{polyzetas}\footnote{Polyzeta is the contraction of polymorphism and of zeta (see Corollary \ref{zeta} below). See also Definition \ref{formalzetavalues} below, in which \textit{formal} and \textit{real} polyzetas are differentiated.} \cite{acta,VJM,CM}
\begin{eqnarray}\label{zetavalues}
\zeta(s_1,\ldots,s_r)
:=\lim_{{z\rightarrow1}}\Li_{s_1,\ldots,s_r}(z)
=\lim_{{n\rightarrow+\infty}}\H_{s_1,\ldots,s_r}(n)
=\sum_{n_1>\ldots>n_r>0}{n_1^{-s_1}\ldots n_r^{-s_r}}.
\end{eqnarray}
The numbers\footnote{If $s_1=1$ then $\zeta(s_1,\ldots,s_r)$ is divergent (see also \eqref{y_1^kw}--\eqref{y_1^k} and Example \ref{generalizedgamma} bellow).} $r$ and $s_1+\ldots+s_r$ are, respectively, the \textit{depth} and \textit{weight} of $\zeta(s_1,\ldots,s_r)$.

\goodbreak

These polyzetas involved as the coefficients of the monodromies, of the functional equations of polylogarithms (see \cite{FPSAC98,FPSAC99,SLC43}) and of the singular (resp. asymptotic) expansions (see \cite{Daresbury,JSC}) of polylogarithms (resp. hamonic sums).
 
With the conventions $\Li_{\emptyset}=1_{\widetilde{\C\setminus\{0,1\}}},\H_{\emptyset}=1$ and $\zeta(\emptyset)=1$, the objets introduced in \eqref{def1def2} and \eqref{zetavalues} do appear then three (total or partial) functions, \textit{that is} the functions $\Li_{\bullet},\H_{\bullet}$ and $\zeta$, on the free monoid $(({\mathbb N}_{\ge1})^*,\emptyset)$, generated by the alphabet ${\mathbb N}_{\ge1}$, with values in the subrings of the rings, respectively, of holomorphic functions on $\widetilde{\C\setminus\{0,1\}}$, rational-valued arithmetic functions and real numbers.

Euler earlier studied polyzetas \cite{euler2}, in classic analysis and in particular, the single zeta values $\{\zeta(s)\}_{s\ge1}$ and the double zeta values $\{\zeta(s_1,s_2)\}_{s_1>1,s_2\ge1}$. For the single zeta values, using the bernoulli numbers, $\{B_k\}_{k\ge0}$, he established the following identities \begin{eqnarray}\label{paire}
\frac{\zeta(2k)}{(2\pi)^{2k}}=\frac{B_{2k}}{2(2k)!}.
&\mbox{for}&k\ge1.
\end{eqnarray}

The rational ratios in \eqref{paire} and the transcendence of $\pi$ \cite{Lindemann} led to the trancendence of $\{\zeta(2k)\}_{k\ge1}$ (see \cite{Waldschmidt}). Ap\'ery proved that $\zeta(3)$ is irrational number \cite{apery}. In this same vain, Ball and Rivoal proved the irrationality of an infinite number of values of the $\zeta$ function at odd integers \cite{ballrivoal,rivoal} and Zudilin proved that at least one of the four numbers $\zeta(5),\zeta(7),\zeta(9)$ and $\zeta(11)$ is irrational \cite{zudilin}.

\begin{remark}\label{zeta(2)}
The linear integer relation $\pi^2=6\zeta(2)$, as a particular case of \eqref{paire}, allowed Euler to prove his earlier discovery of the usual relation, by numerical approximations, of the irrational numbers $\pi^2$ and $\zeta(2)$.
\end{remark}

For the double zeta values, Euler stated that $\zeta(6,2)$ can not be expressed on $\zeta(2),...,\allowbreak\zeta(8)$ and proved
\begin{eqnarray}\label{zeta(s,1)}
\zeta(2,1)=\zeta(3)
&\mbox{and}&
\zeta(s,1)=\frac12\Big(s\zeta(s+1)-\sum_{j=1}^{s-2}\zeta(j+1)\zeta(s-j)\Big),s>1.
\end{eqnarray}
He introduced the following identity (being an origin of the quasi-shuffle product,
denoted by $\stuffle$ and recursively defined in \eqref{stuffle} below \cite{euler2}
\begin{eqnarray}\label{Euler}
\zeta(s_1)\zeta(s_2)=\zeta(s_1,s_2)+\zeta(s_2,s_1)+\zeta(s_1+s_2).
\end{eqnarray}
In this same vain, Cartier also gave another identities concerning double zeta values using their commutative generating series \cite{Cartier_double}.

For any $s$ and $k\ge1$, denoting $\zeta(k)$ and $\zeta(s,\overbrace{1,\ldots,1}^{k\;\mathrm{times}})$ by $s_k$ and $c_{s,k}$, respectively, and hoping but doing not succeed to find a formula similar to \eqref{paire} for $\zeta(2k+1)$ (see Corollary \ref{transcendent} below), Nielsen extended \eqref{Euler} and, for any $n>0$ and $p,p'>1$, proved the following identities \cite{lewinlivre,nielsenlivre}
\begin{eqnarray}
s_{p'}s_{p}&=&s_{p'+p}+c_{p',p}+c_{p,p'},\label{reflexion}\\
c_{n,p}&=&(-1)^n\sum_{\nu=0}^{p-2}{n+\nu-1\choose n-1}c_{n+\nu,p-\nu}
+\sum_{\nu=0}^{n-2}(-1)^{\nu}{p+\nu-1\choose p-1}s_{n-\nu}s_{p+\nu}\nonumber\\
&&-(-1)^n{p+n-2\choose p-1}(s_{p+n}+c_{1,p+n-1}).\label{reduction}
\end{eqnarray}
He stated that $c_{n,p}$ is homogeneous polynomials of degree $n+k$ with rational coefficients on $\zeta(2),\ldots,\zeta(s+k)$ and deduced, for any $p>1$ and $p'\ge1$, that
\begin{eqnarray}\label{Nielsenrelations}
s_{p+1}=\sum_{\nu=1}^{p-1}c_{\nu,p-\nu+1}&\mbox{and}&
c_{1,p'}=\frac{p'}2s_{p'+1}-\frac12\sum_{\nu=2}^{p'-1}s_{\nu}s_{p'+1-\nu}.
\end{eqnarray}
He earlier used iterated integrals \cite{chen} (see \eqref{iteratedintegral} below) to represent the following particular polylogarithms
\begin{eqnarray}
\Li_1(z)=\log\frac1{1-z}&\mbox{and}&\Li_s(z)=\int_0^z\omega_0(t)\Li_{s-1}(t),\\
\Li_{0,\underbrace{1,\ldots,1}_{k\;\mathrm{times}}}(z)=\frac1{k!}\log^k\frac1{1-z}
&\mbox{and}&\Li_{s,\underbrace{1,\ldots,1}_{k\;\mathrm{times}}}(z)
=\int_0^z\omega_0(t)\Li_{s-1,\underbrace{1,\ldots,1}_{k\;\mathrm{times}}}(t),
\end{eqnarray}
where $\omega_0$ and $\omega_1$ are the following differential $1$-forms
\begin{eqnarray}\label{differentialforms}
\omega_0(z)=z^{-1}{dz}&\mbox{and}&\omega_1(z)=(1-z)^{-1}{dz},
\end{eqnarray}

More generaly, the polylogarithm in \eqref{def1def2} can be also defined as an iterated integral, of $\omega_0$ and $\omega_1$ introduced in \eqref{differentialforms} and along the path $0\path z$ over $\widetilde{\C\setminus\{0,1\}}$:
\begin{eqnarray}\label{iteratedintegral}
\Li_{s_1,\ldots,s_r}(z)=\left\{\begin{array}{lcl}
\displaystyle\int_0^z\omega_1(s)\Li_{s_2,\ldots,s_r}(s)&\mbox{if}&s_1=1,\cr
\displaystyle\int_0^z\omega_0(s)\Li_{s_1-1,\ldots,s_r}(s)&\mbox{if}&s_1>1.
\end{array}\right.
\end{eqnarray}
These iterated integrals satisfies the shuffle product \cite{chen}, denoted by $\shuffle$ and recursively defined in \eqref{shuffle} below. It follows also that
\begin{eqnarray}\label{differentiation}
\frac{d}{dz}\Li_{s_1,\ldots,s_r}(z)=\left\{\begin{array}{lcl}
\omega_1(z)\Li_{s_2,\ldots,s_r}(z)&\mbox{if}&s_1=1,\cr
\omega_0(z)\Li_{s_1-1,\ldots,s_r}(z)&\mbox{if}&s_1>1.
\end{array}\right.
\end{eqnarray}

The $\{\zeta(s_1,\ldots,s_r)\}^{r\ge1}_{s_1>1,s_2,\ldots,s_r\ge1}$ are also called \textit{multi zeta values} (MZV for short) \cite{zagier} or \textit{multiple harmonic series} \cite{Hoffman,hoffman2} or \textit{Euler-Zagier sums} \cite{borwein4}. One can found in their biographies some recent applications of these special values in algebraic geometry, knots invariants of Vassiliev-Kontsevich, modular forms, quantum electrodynamic, \ldots.

More recently, extending the Euler's numerical appoximation method, mentioned previously in Remark \ref{zeta(2)}, many new linear integer relations for polyzetas are also detected in hight performance computing \cite{blumlein,borwein2,borwein4} using the algorithms of\footnote{LLL is a contraction of A. Lenstra, H. Lenstra and L. Lov\'asz.} LLL type and the truncations at order $n$ of $\{\zeta(s_1,\ldots,s_r)\}^{r\ge1}_{s_1>1,s_2,\ldots,s_r\ge1}$, \textit{i.e.} $\{\H_{s_1,\ldots,s_r}(n)\}^{r\ge1}_{s_1>1,s_2,\ldots,s_r\ge1}$ satisfying the following difference equation and satisfying the $\stuffle$ product as already said above (see also Theorem \ref{structure1} below)
\begin{eqnarray}
\H_{s_1,\ldots,s_r}(n)-\H_{s_1,\ldots,s_r}(n-1)={n^{-s_1}}.
\end{eqnarray}

In this numerical appoximation approach, the main problem is to detect with near certainty which polyzetas can not be expressed on $\{\zeta(2),\ldots,\zeta(s+k)\}$ and are qualified as \textit{new constants} \cite{borwein2,borwein4} (as for the
Euler's result on $\zeta(6,2)$). Then such polyzetas could be $\Q$-algebraically
independent on $\{\zeta(2),\ldots,\zeta(s+k)\}$ (also see Example \ref{E} below) and these polyzetas could be transcendental numbers (see the proof of Corollary
\ref{transcendent} below).

Checking $\Q$-linear relations on $\{\zeta(s_1,\ldots,s_r)\}^{r\ge1}_{s_1>1,s_2,\ldots,s_r\ge1\atop2\le s_1+\ldots+s_r\le12}$ by LLL type programs, Zagier states that the $\Q$-module generated by MZV is apparently graded (see Theorem \ref{graded} below for a proof) and then guess

\begin{conjecture}[\cite{zagier}]\label{Zconj}
For any $k\ge1$, let
\begin{eqnarray*}
\calZ_k:=\mathrm{span}_{\Q}\{\zeta(s_1,\ldots,s_r)\}^{r\ge1}_{s_1>1,s_2,\ldots,s_r\ge1\atop s_1+\ldots+s_r=k}&\mbox{and}&d_k:=\dim\calZ_k.
\end{eqnarray*}
Then $\{d_k\}_{k\ge0}$ satistfies the recursion\footnote{See also \cite{racinet,FPSAC97,kaneko} for other algebraic checks.} $d_k=d_{k-3}+d_{k-2}$ ($k\ge4$) and $d_1=0,d_2=d_3=1$.
\end{conjecture}

These conjecture then each module $\calZ_k$ is $\Q$-linearly generated by $d_k$ transcendental zeta values and there is no $\Q$-linear relation among elements of different $\calZ_k$. In this same vain, intensive tests, via linear relations among  the convergent polyzetas, are also done in \cite{blumlein,ecalle,flajoletsalvy,Furusho,racinet,kaneko}
and several estimations of these dimensions, via \textit{motivic} structures, are also obtained by Deligne and Goncharov \cite{delignegoncharov}, Terasoma \cite{Terasoma}. Even better, with the \textit{motivic} polyzetas, in \cite{brown}, Brown showed that the polyzetas indexed by $2$ and $3$ are generators for the $\Q$-vector space of the polyzetas, \textit{i.e.}
\begin{eqnarray}\label{Image}
\calZ:=\mathrm{span}_{\Q}\{\zeta(s_1,\ldots,s_r)\}^{r\ge1}_{s_1>1,s_2,\ldots,s_r\ge1}.
\end{eqnarray}

As already said, the analytic properties, as monodromies and functional equations of polylogarithms, are studied in \cite{FPSAC98,FPSAC99,SLC43}, singular (resp. asymptotic) expansions of polylogarithms (resp. harmonic sums), for $z\to1$ (resp. $n\to+\infty$) in the comparison scale $\{(1-z)^a\log^b((1-z)^{-1})\}_{a\in\Z\atop b\in\N}$ (resp. $\{n^a\log^b(n)\}_{a\in\Z\atop b\in\N}$ or $\{n^a\H_1^b(n)\}_{a\in\Z\atop b\in\N}$), are realized in \cite{Daresbury,JSC} leading to the constants (see also the finite parts in \eqref{gamma_bullet} and \eqref{regularization1}-\eqref{regularization2} below)\footnote{The coefficients of these expansions can be different if one changes the comparison scales.} given in \eqref{zetavalues}. Now, studying Conjecture \ref{Zconj}, in continuation with the works carried out in \cite{Bui,JSC,FPSAC97} and in a symbolic approach (as with the manipulation of the symbols of the polyzetas appearing in \eqref{zetavalues}--\eqref{Nielsenrelations} and being typped as the real numbers), this work provides more explanations regarding the algorithm {\bf LocalCoordinateIdentification}, partially implemented in \cite{Bui} and briefly described in \cite{LT12,Tokyo} as an application of an Abel type theorem \cite{Daresbury,JSC} (see Theorem \ref{renormalization1} below) concerning the noncommutative generating series (see \eqref{Zshuffle}--\eqref{Zstuffle} below) of $\{\Li_{s_1,\ldots,s_r}\}^{r\ge1}_{s_1,\ldots,s_r\ge1}$ (resp. $\{\H_{s_1,\ldots,s_r}\}^{r\ge1}_{s_1,\ldots,s_r\ge1}$), over $X=\{x_0,x_1\}$ and $Y=\{y_k\}_{k\ge1}$ generating the free monoids $(X^*,1_{X^*})$ and $(Y^*,1_{Y^*})$, with respect to the concatenation (denoted by $\tt conc$ and omitted if there is no ambiguity \cite{berstel,lothaire}).
This Abel theorem exploits the following one-to-one correspondences
\begin{eqnarray}\label{piXY1}
x_0^{{s_1}-1}x_1\ldots x_0^{{s_r}-1}x_1\in X^*x_1
\mathop{\rightleftharpoons}\limits_{\pi_X}^{\pi_Y}y_{s_1}\ldots y_{s_r}\in Y^*
\leftrightarrow(s_1,\ldots,s_r)\in\N_{\ge1}^*,
\end{eqnarray}
where $\pi_X$ and $\pi_Y$ are the following morphisms of monoids
\begin{eqnarray}
\left\{\begin{array}{rcl}
\pi_X:(Y^*,1_{Y^*})&\longrightarrow&(X^*x_1,1_{X^*})\cr
y_k&\longmapsto&x_0^{k-1}x_1
\end{array}\right\}
\mbox{ and }
\left\{\begin{array}{rcl}
\pi_Y:(X^*x_1,1_{X^*})&\longrightarrow&(Y^*,1_{Y^*})\cr
x_0^{k-1}x_1&\longmapsto&y_k\label{piXY2}
\end{array}\right\}.
\end{eqnarray}
It uses in practice the indexations of polylogarithms (resp. harmonic sums) by words over $X$ (resp. $Y$) meaning that \cite{CM} (see Theorem \ref{structure1} below)
\begin{enumerate}
\item $\Li_{\bullet}$ (resp. $\H_{\bullet}$) is also a function on the monoid $(X^*,1_{X^*})$ (resp. $(Y^*,1_{Y^*})$) with values in the subrings of the ring of holomorphic functions on $\widetilde{\C\setminus\{0,1\}}$ (resp. rational-valued arithmetic functions), \textit{i.e.} one puts $\Li_{1_{Y^*}}=1_{\widetilde{\C\setminus\{0,1\}}}$ (resp. $\H_{1_{Y^*}}=1$), $\Li_{x_0}(z)=\log(z)$
and, for any $x_0^{s_1-1}x_1\ldots x_0^{s_r-1}x_1\in X^*x_1$ (resp. $y_{s_1}\ldots y_{s_r}\in Y^*$),
\begin{eqnarray}\label{indexation0}
\Li_{x_0^{s_1-1}x_1\ldots x_0^{s_r-1}x_1}=\Li_{s_1,\ldots,s_r}
&(\mbox{resp.}&\H_{y_{s_1}\ldots y_{s_r}}=\H_{s_1,\ldots,s_r}).
\end{eqnarray}

\item $\zeta$ becomes then a partial function on $X^*$ and $Y^*$, with values in the subring of the ring of real numbers (see Corollary \ref{zeta} below), \textit{i.e.} $\zeta(1_{X^*})=\zeta(1_{Y^*})=1$ and, for any $x_0^{s_1-1}x_1\ldots x_0^{s_r-1}x_1\in x_0X^*x_1$ and $y_{s_1}\ldots y_{s_r}\in (Y\setminus\{y_1\})Y^*$,  as the real limits in \eqref{zetavalues},
\begin{eqnarray}\label{indexation}
\left\{\begin{array}{rcl}
\zeta(x_0^{s_1-1}x_1\ldots x_0^{s_r-1}x_1)&:=&
\lim\limits_{z\to1}\Li_{x_0^{s_1-1}x_1\ldots x_0^{s_r-1}x_1}(z)\\
\zeta(y_{s_1}\ldots y_{s_r})&:=&
\lim\limits_{n\to\infty}\H_{y_{s_1}\ldots y_{s_r}}(n)
\end{array}\right\}
=\zeta(s_1,\ldots,s_r).
\end{eqnarray}
\end{enumerate}

These functions on monoids lead to consider the isomorphisms $\Li_{\bullet}$ and $\H_{\bullet}$ \cite{CM} (see Theorem \ref{structure1} below) and then, as a consequence, the $\zeta$ polymorphism partially defined \cite{CM} (see Corollary \ref{zeta} below), both from the $\Q$-algebras of noncommutative polynomials, respectively, over $X$ and $Y$, equipped the shuffle product (denoted by $\shuffle$) and the quasi-shuffle (denoted by $\stuffle$) product, \textit{i.e.}
\begin{eqnarray}\label{polynomials}
(\ncp{\Q}{X},\shuffle,1_{X^*})&\mbox{and}&
(\ncp{\Q}{Y},\stuffle,1_{Y^*}),
\end{eqnarray}
with values, respectively, in the $\Q$-algebras of polylogarithms and harmonic sums, \textit{i.e.}
\begin{eqnarray}\label{Qalgebas}
(\mathrm{span}_{\Q}\{\Li_w\}_{w\in X^*},\times,1_{\widetilde{\C\setminus\{0,1\}}})&\mbox{and}&(\mathrm{span}_{\Q}\{\H_w\}_{w\in Y^*},\times,1).
\end{eqnarray}
and in the $\Q$-algebra of the polyzetas, \textit{i.e.} $(\calZ,\times,1)$ being a subalgebra of $(\R,\times,1)$.

The $\Q$-algebras in \eqref{polynomials} admit totally ordered pure transcendence bases, as the sets of Lyndon words $\Lyn X$ and $\Lyn Y$ (or $\{S_l\}_{l\in\Lyn X}$ \cite{lothaire,reutenauer} and $\{\Sigma_l\}_{l\in\Lyn Y}$ \cite{VJM,CM}), for which various $\Q$-algebraic bases for $(\calZ,\times,1)$ will be constructed (see Example \ref{E1} and Proposition \ref{directsum} below). Moreover, the $\Q$-algebraic bases $\{S_l\}_{l\in\Lyn X}$ and $\{\Sigma_l\}_{l\in\Lyn Y}$ admit the dual bases, \textit{i.e.} $\{P_l\}_{l\in\Lyn X}$ and $\{\Pi_l\}_{l\in\Lyn Y}$, being totally ordered bases of the Lie algebras of the primitive elements, of the $\Q$-bialgbras respectively (see \eqref{Hopfshuffle} and \eqref{Hopfstuffle}, Examples \ref{P_lS_l}--\ref{Pi_lSigma_l} below)
\begin{eqnarray}\label{bialgbras}
(\ncp{\Q}{Y},\conc,1_{Y^*},\Delta_{\stuffle})&\mbox{and}&
(\ncp{\Q}{X},\conc,1_{X^*},\Delta_{\shuffle}).
\end{eqnarray}

The equations on the graphs of this $\zeta$ polymorphism, which are the following grouplike series for the coproducts $\Delta_{\shuffle}$ and $\Delta_{\stuffle}$ (see \eqref{Zshuffle}--\eqref{Zgamma} below)\footnote{$\gamma$ is the Euler's gamma constant. See also Remark \ref{associator} below.} \cite{acta,VJM,CM}
\begin{eqnarray}\label{gplkseries}
Z_{\shuffle}=\prod_{l\in\Lyn X\setminus X}^{\searrow}e^{\zeta(S_l)P_l},
&Z_{\gamma}=e^{\gamma y_1}Z_{\stuffle},
&Z_{\stuffle}=
\prod_{l\in\Lyn Y\setminus\{y_1\}}^{\searrow}e^{\zeta(\Sigma_l)\Pi_l},
\end{eqnarray}
in which $\{\zeta(S_l)\}_{l\in\Lyn X\setminus X},\{\gamma\}\cup\{\{\zeta(\Sigma_l)\}_{l\in\Lyn Y\setminus\{y_1\}}\}$ and $\{\zeta(\Sigma_l)\}_{l\in\Lyn Y\setminus\{y_1\}}$ are their respective locale coordinates of second kind on the group of grouplike series, belonging to the $\Q$-algebra $(\calZ,\times,1)$ being a sub algebra of $(\R,\times,1)$. These equations, \textit{i.e.}
\begin{eqnarray}\label{bridge}
Z_{\gamma}=e^{\gamma y_1-\sum\limits_{k\ge2}\zeta(k){(-y_1)^k}/k}\pi_Y(Z_{\shuffle})&\mbox{and}&
Z_{\stuffle}=e^{-\sum\limits_{k\ge2}{\zeta(k)}{(-y_1)^k}/k}\pi_Y(Z_{\shuffle}),
\end{eqnarray}
bridge the algebraic structures of the polyzetas (see Corollaries \ref{zeta} below) and provide the kernels of the $\zeta$ polymorphism \cite{acta,VJM,CM} which are the following ideals of the $\Q$-algebras of noncommutative polynomials in \eqref{polynomials} (see Corollary \ref{pont} below)
\begin{eqnarray}
\shuffle-\mbox{ideal }\calR_X&\mbox{and}&\stuffle-\mbox{ideal }\calR_Y.
\end{eqnarray}

These kernels are obtained by identifying the local coordinates in the bridge equations (see \eqref{bridge}) providing the $\Q$-algebraic relations\footnote{See Examples \ref{Pi_lSigma_lsuite}--\ref{P_lS_lsuite} and \ref{E1} below for some expressions of the polyzetas $\{\zeta(S_l)\}_{l\in\Lyn X\setminus X}$ and $\{\zeta(\Sigma_w)\}_{w\in(Y\setminus\{y_1\})Y^*}$ and their $\Q$-algebraic relations.} among $\{\zeta(S_l)\}_{l\Lyn X\setminus X}$ and among $\{\zeta(\Sigma_l)\}_{l\Lyn Y\setminus{y_1}}$ (\textit{i.e.} the coordinates of the grouplike series $\{Z_{\shuffle},Z_{\gamma},Z_{\stuffle}\}$ satistfy $\Q$-algebraic relations) whereas the algorithm presented in \cite{FPSAC97}, based on the so-called \textit{regularized double shuffle relations} (recalled in Remark \ref{regularizations} below) provides the $\Q$-algebraic relations among $\{\zeta(l)\}_{l\Lyn X\setminus X}$, or equivalently $\{\zeta(l)\}_{l\Lyn Y\setminus{y_1}}$. These kernels provides then homogeneous in weight polynomials\footnote{See Example \ref{E3} below for some homogeneous in weight polynomials generating inside $\ker\zeta$.}, belonging to $\calQ_X$ or to $\calQ_Y$, constituting the ideals $\calR_X$ and $\calR_Y$ (see \eqref{Q_XQ_Y}--\eqref{R_XR_Y} below) and being viewed as confluent rewriting systems, in which each rewriting rule\footnote{See Example \ref{E0} below for some rewriting rules.} belonging to $\calR^X_{irr}$ or to $\calR^Y_{irr}$ (see \eqref{Rirr} below) admits
\begin{enumerate}
\item the left side as being the leading term of the associated homogeneous polynomial,

\item the right side as being canonically represented on the graded $\Q$-algebra generated by irreducible terms (see also Remark \ref{bijection} and Definition \ref{formalzetavalues} below)\footnote{See Example \ref{E0} below for some elements of $\calL^{X,\infty}_{irr}$ or to $\calL^{Y,\infty}_{irr}$.}, belonging to
\begin{eqnarray}\label{formalirre}
\calL^{X,\infty}_{irr}\subsetneq\{S_l\}_{l\Lyn X}&\mbox{or to}&
\calL^{Y,\infty}_{irr}\subsetneq\{\Sigma_l\}_{l\in\Lyn Y},
\end{eqnarray}
which are totally ordered and encode the algebraic generators of the $\Q$-free algebra of polyzetas (see Proposition \ref{directsum} and Theorem \ref{graded} below) \cite{acta,VJM,CM}.
\end{enumerate}

It follows that the irreducible polyzetas belonging to (see \eqref{restriction}--\eqref{Lirr} below)\footnote{See Example \ref{E1} below for some elements of $\calZ^{X,\infty}_{irr}$ or to $\calZ^{Y,\infty}_{irr}$.}
\begin{eqnarray}
\calZ^{X,\infty}_{irr}=\{\zeta(p)\}_{p\in\calL^{X,\infty}_{irr}}
&\mbox{or to}&
\calZ^{Y,\infty}_{irr}=\{\zeta(p)\}_{p\in\calL^{Y,\infty}_{irr}},
\end{eqnarray}
are $\Q$-algebraically free and are transcendental numbers (see Corollary \ref{transcendent} below), for which the elements of $\calL^{X,\infty}_{irr}$ and $\calL^{Y,\infty}_{irr}$ are viewed as the \textit{formal} irreducible polyzetas (see Definition \ref{formalzetavalues} below). Moreover, $\calL^{X,\infty}_{irr}$ and $\calL^{Y,\infty}_{irr}$ are images of $\calZ^{X,\infty}_{irr}$ and $\calZ^{Y,\infty}_{irr}$ by a section of \textit{surjective} $\zeta$ polymorphism (described in Corollary \ref{zeta} below) such that the following \textit{restrictions} of the $\zeta$ polymorphism are \textit{bijective} (see \eqref{restriction} below)
\begin{eqnarray}\label{bijective}
\zeta:(\Q[\calL^{X,\infty}_{irr}],\shuffle,1_{X^*})&\hooklongtwoheadrightarrow&(\Q[\calZ^{X,\infty}_{irr}],\times,1),\\
\zeta:(\Q[\calL^{Y,\infty}_{irr}],\stuffle,1_{Y^*})&\hooklongtwoheadrightarrow&(\Q[\calZ^{Y,\infty}_{irr}],\times,1).
\end{eqnarray}

By \eqref{formalirre}, the sets $\calL^{X,\infty}_{irr}$ and $\calL^{Y,\infty}_{irr}$, of irreducible terms, are $\Q$-algebraicly free and then, by the bijections in \eqref{bijective}, the sets $\calZ^{X,\infty}_{irr}$ and $\calZ^{Y,\infty}_{irr}$, of irreducible polyzetas, constitute algebraic bases for the $\Q$-algebra of polyzetas $(\calZ,\times,1)$ being a subalgebra of $(\R,\times,1)$ (see also Definition \ref{formalzetavalues}).

Therefore, through Example \ref{E} below up to weight $12$,
\begin{enumerate}
\item $\pi$ is $\Q$-algebraically free on the odd zeta values\footnote{\label{ratio}The readers are also invited to have a look at \cite{PMB} for a proof, with the present symbolic approach, of certains rational ratios $\zeta(s_1,\cdots,s_r)/\pi^{s_1+\cdots+s_r}$, proving the transcendence of the corresponding $\zeta(s_1,\cdots,s_r)$ using the transcendence of $\pi$ \cite{Lindemann}.} $\{\zeta(2q+1)\}_{1\le q\le 5}$ (see also \cite{VJM}).

\item Conjecture \ref{Zconj} held (see also \cite{racinet,FPSAC97,kaneko}) and the odd zeta values $\{\zeta(3),\zeta(5),\zeta(7),\allowbreak\zeta(9),\zeta(11)\}$ are irreducible then are $\Q$-algebraicly independent (see also \cite{FPSAC97}).
\end{enumerate}

The organization of this paper is follows
\begin{enumerate}
\item[(a)] In Section \ref{Combinatorics}, the isomorphy between the following shuffle and quasi-shuffle $\Q$-bialgebras will be recalled (Theorem \ref{isomorphy} below)
\begin{eqnarray}
(\ncp{\Q}{Y},\conc,1_{Y^*},\Delta_{\stuffle})&\mbox{and}&
(\ncp{\Q}{Y},\conc,1_{Y^*},\Delta_{\shuffle}),
\end{eqnarray}
for which, by the Cartier-Quillen-Milnor-Moore theorem (CQMM  for short) and Poincar\'e-Birkhoff-Witt theorem (PBW for short), various dual bases are constructed, in \eqref{P_w} and \eqref{Pi_w} below, for factorizing in the M\'elan\c{c}on-Reute\-nauer-Sch\"utzen\-berger form (MRS for short), the diagonal series $\calD_X$ and $\calD_Y$ (see \eqref{diagonalX} and \eqref{diagonalY} below)
\begin{eqnarray}\label{diagonal}
\calD_X:=\sum_{w\in X^*}w\otimes w=\prod_{l\in\Lyn X}^{\searrow}e^{S_l\otimes P_l}
&\mbox{and}&
\calD_Y:=\sum_{w\in Y^*}w\otimes w=\prod_{l\in\Lyn Y}^{\searrow}e^{\Sigma_l\otimes\Pi_l}.
\end{eqnarray}

\item[(b)] In Section \ref{Indexation}, the isomorphies of the $\Q$-algebras $\Li_{\bullet}$ and $\H_{\bullet}$, described in \eqref{def1def2}--\eqref{gs}, from the shuffle and quasi-shuffle $\Q$-algebras to the $\Q$-algebras of polylogarithms and harmonic sums (see \eqref{polynomials}--\eqref{Qalgebas}) will be explained (Theorem \ref{structure1}).

On the one hand, these isomorphies algebraically put, by \eqref{diagonal}, the noncommutative generating series of polylogarithms and of harmonic sums in the MRS form (see \eqref{NCGS}--\eqref{Zstuffle} below)
\begin{eqnarray}
\L=(\Li_{\bullet}\otimes\mathrm{Id})\calD_X=\prod_{l\in\Lyn }^{\searrow}e^{\Li_{S_l}P_l}
&\mbox{and}&
\H=(\H_{\bullet}\otimes\mathrm{Id})\calD_Y=\prod_{l\in\Lyn Y}^{\searrow}e^{\H_{\Sigma_l}\Pi_l}
\end{eqnarray}
and, on the other hand, analytically establish an Abel like theorem (Theorem \ref{renormalization1}) and identities bridging algebraic structures of poyzetas (Corollary \ref{pont}).

\item[(c)] In Section \ref{Algorithm}, the algorithm {\bf LocalCoordinateIdentification}, bringing more achievements than in \cite{Bui}, will provide two confluent rewriting systems\footnote{See also Remarks \ref{ordered1}--\ref{ordered2} below.} (see \eqref{Rirr} below)
\begin{eqnarray}
(\Q1_{Y^*}\oplus(Y\setminus\{y_1\})\ncp{\Q}{Y},\calR^Y_{irr})&\mbox{and}&(\Q1_{X^*}\oplus x_0\ncp{\Q}{X}x_1,\calR^X_{irr})
\end{eqnarray}
(without critical pairs and admitting, respectively, $\calL^{Y,\infty}_{irr}$ and $\calL^{X,\infty}_{irr}$ as the sets of irreducible terms).

These rewriting systems provide also the image and the kernels of the $\zeta$ polymorphism (Proposition \ref{directsum}) and then the algebraic structures and the arithmetical natures of polyzetas (Theorem \ref{graded} and Corollaries \ref{transcendent}--\ref{independence}).
\end{enumerate}

\section{Algebraic combinatorial framework}\label{Combinatorics}
Throughout this work, coefficients belong to a commutative ring\footnote{although
some of the properties already hold for general commutative semiring \cite{berstel}.} $A$ containing $\Q$ and, unless explicitly stated, all tensor products will be considered over the ambient ring (or field).

For all matters concerning finite or infinite alphabets, $X$ ($=\{x_0,x_1\}$ and similar) or $Y$ ($=\{y_k\}_{k\ge1}$ and similar), we use a generic model noted $\calX$ in order to state their common combinatorial features (see also Theorem \ref{isomorphy} below).

In the Section \ref{Indexation}, to manipulate convergent and divergent indexed by words polyzetas (see \eqref{indexation}), the following notations will be also adapted, for greater convenience,
\begin{eqnarray}\label{convenience}
\begin{array}{llrllll}
\mbox{if}&\calX=X&\mbox{then let}&\gDIV:=X&\mbox{and}&\CONV:=x_0X^*x_1\\
&&\mbox{else let}&\gDIV:=\{y_1\}&\mbox{and}&\CONV:=(Y\setminus\{y_1\})Y^*.
\end{array}
\end{eqnarray}
It follows that $\Lyn\calX\setminus\gDIV\subset\CONV$.

Once $\calX$ has been totally ordered ($x_0\prec x_1$ or $y_1\succ y_2\succ\ldots$), let $\Lyn\calX$ be the set of Lyndon words over $\calX$ (generating the free monoid $(\calX,1_{\calX^*})$)\footnote{Let $\mu$ denote the M\"obius function. Then the number of Lyndon words of length $k$, over $X$, is given by a Witt's formula \cite{lothaire}, \textit{i.e.}
\begin{eqnarray*}
\psi_2(k)=k^{-1}\sum_{d\vert k}\mu(d)2^{k/d}.
\end{eqnarray*}
Hence, with the notations in Conjecture \ref{Zconj}, one also has $d_k\le \psi_2(k)-\psi_2(k-1)$.}.
Using Notations in \eqref{piXY1}--\eqref{piXY2}, one has \cite{lothaire}
\begin{eqnarray}\label{perrin}
l\in\Lyn X\setminus\{x_0\}\iff\pi_Y(l)\in\Lyn Y.
\end{eqnarray}

Any pair of Lyndon words $(l_1,l_2)$ is called the standard factorization of $l\in\Lyn\calX$, and is denoted by $st(l)$, if $l=l_1l_2$ and $l_2$ is the longest nontrivial proper right factor of $l$ or, equivalently, its smallest such, for lexicographic ordering (see \cite{lothaire} for proofs). 

The $A$-module $\ncp{A}{\calX}$ is endowed with the unital associative concatenation product and the unital associative commutative shuffle product. The latter is recursively defined,
for any $x,y\in\calX$ and $u,v,w\in\calX^*$, by \cite{fliess1}
\begin{eqnarray}\label{shuffle}
w\shuffle 1_{\calX^*}=1_{\calX^*}\shuffle w=w&\mbox{and}&
xu\shuffle yv=x(u\shuffle yv)+y(xu\shuffle v).
\end{eqnarray}
The coproducts $\Delta_{\conc}$ and $\Delta_{\shuffle}$ are morphisms for concatenation and are defined by
\begin{eqnarray}\label{coproducts}
\forall x\in\calX,&\Delta_{\conc}(x)=\Delta_{\shuffle}(x)=1_{\calX^*}\otimes x+x\otimes1_{\calX^*}.
\end{eqnarray}

By a Radford's theorem \cite{radford}, $\Lyn\calX$ forms a pure transcendence basis of the shuffle algebra $(\ncp{A}{\calX},\shuffle,1_{\calX^*})$.
Letting $\{P_l\}_{l\in\Lyn\calX}$ be a homogeneous basis of the Lie algebra $\ncp{\calL ie_{A}}{\calX}$, the enveloping algebra $\mathcal{U}(\ncp{\calL ie_{A}}{\calX})$ is classically endowed with the graded linear basis $\{P_w\}_{w\in\calX^*}$ expanded, by PBW theorem \cite{Lie7,lothaire,reutenauer}, after $\{P_l\}_{l\in\Lyn\calX}$ and it is isomorphic, by CQMM theorem \cite{Cartier2}, to the following connected, graded and co-commutative bialgebra
\begin{eqnarray}\label{Hopfshuffle}
\calH_{\shuffle}(\calX):=(\ncp{A}{\calX},\conc,1_{\calX^*},\Delta_{\shuffle})
\end{eqnarray}

\begin{remark}\label{length}
For ${\calX}=X$ or $Y$ the corresponding monoids are equipped with length functions, for $X$ we consider the length of words (\textit{i.e.} $(w)=\ell(w)=\abs{w}$) and for $Y$ the length is given by the weight (\textit{i.e.} $(w)=\ell(y_{i_1}\ldots y_{i_n})=i_1+\ldots+i_n$). This naturally induces a grading of $\ncp{A}{{\calX}}$ and $\ncp{\calL ie_{A}}{{\calX}}$ in free modules of finite dimensions. Hence, with the notations used in \eqref{perrin}, for any $\lambda\in\Lyn X\setminus\{x_0\}$, one also has $\abs{\lambda}=(\pi_Y(\lambda))$.
\end{remark}

Moreover, using the following pairing
\begin{eqnarray}\label{pairing}
\ncs{A}{\calX}\otimes\ncp{A}{\calX}\longrightarrow A,
T\otimes P\longmapsto\scal{T}{P}:=\sum_{w\in\calX^*}\scal{T}{w}\scal{P}{w},
\end{eqnarray}
let the graded dual basis of $\{P_w\}_{w\in\calX^*}$ be $\{S_w\}_{w\in\calX^*}$, containing the transcendence basis $\{S_l\}_{l\in\Lyn\calX}$ of the shuffle algebra, and let the graded dual of $\calH_{\shuffle}(\calX)$ be
\begin{eqnarray}
\calH_{\shuffle}^{\vee}(\calX):=(\ncp{A}{\calX},{\shuffle},1_{\calX^*}\Delta_{\conc}).
\end{eqnarray}
Then one also has the following factorization of the diagonal series \cite{reutenauer}
\begin{eqnarray}\label{diagonalX}
{\calD}_{\calX}:=\sum_{w\in\calX^*}w\otimes w=\sum_{w\in\calX^*}S_w\otimes P_w
=\prod_{l\in\Lyn\calX}^{\searrow}e^{S_l\otimes P_l}.
\end{eqnarray}

The dual polynomial bases $\{P_w\}_{w\in\calX^*}$ and $\{S_w\}_{w\in\calX^*}$, homogeneous in weight (\textit{i.e.} length of each $w$), can be constructed recursively by (see \cite{Bui}, for examples by computer)
\begin{eqnarray}\label{P_w}
\left\{\begin{array}{rclrclll}
P_x&=&x,&S_x&=&x,
&\mbox{for }x\in\calX,\\
P_l&=&[P_{l_1},P_{l_2}],&S_l&=&yS_{l'},
&\mbox{for }l=yl'\in\Lyn\calX\setminus\calX,\atop st(l)=(l_1,l_2),\\
P_w&=&P_{l_1}^{i_1}\ldots P_{l_k}^{i_k},&S_w&=&\displaystyle\frac{S_{l_1}^{\shuffle i_1}\shuffle\ldots\shuffle S_{l_k}^{\shuffle i_k}}{i_1!\ldots i_k!},
&{\displaystyle\mbox{for }w=l_1^{i_1}\ldots l_k^{i_k},\mbox{ with }l_1,\ldots,\atop\displaystyle l_k\in\Lyn\calX,l_1\succ\ldots\succ l_k,}
\end{array}\right.
\end{eqnarray}

Let us recall also that the polynomials $\{S_l\}_{l\in\Lyn X}$ (resp. $\{P_l\}_{l\in\Lyn X}$), homogeneous in weight (\textit{i.e.} length of each $l$), are lower (resp. upper) triangular, \textit{i.e.} for any $l\in\Lyn X$ and $l'\in\Lyn Y$, one has (see \cite{reutenauer})
\begin{eqnarray}\label{triangular1}
S_l=l+\sum_{v\succ l,\abs{v}=\abs{l}}a_vv,\ a_v\in A&\mbox{and}&
P_l=l+\sum_{v\prec l,\abs{v}=\abs{l}}c_vv,\ c_v\in A.
\end{eqnarray}

\begin{remark}\label{ordered1}
\begin{enumerate}
\item The ordering over $\Lyn X$ elicits the one over $\{S_l\}_{l\in\Lyn X}$ and $\{P_l\}_{l\in\Lyn X}$.

\item By \eqref{triangular1}, the following assertions are equivalent
\begin{enumerate}
\item $\Lyn X$ is a pure transcendence basis of $(\ncp{A}{X},\shuffle,1_{X^*})$ 
(\textit{i.e.} the Radforf theorem \cite{radford,reutenauer}),
\item $\{S_l\}_{l\in\Lyn X}$  is a pure transcendence basis of $(\ncp{A}{X},\shuffle,1_{X^*})$.
\end{enumerate}

\item One also has
\begin{eqnarray*}
(A1_{X^*}\oplus x_0\ncp{A}{X}x_1,\shuffle,1_{X^*})\cong&A[\Lyn X\setminus X]&=A[\{S_l\}_{l\in\Lyn X\setminus X}].
\end{eqnarray*}
\end{enumerate}
\end{remark}

\begin{example}[\cite{hoangjacoboussous}]\label{P_lS_l}
For $X=\{x_0,x_1\}$, one has
$$\begin{array}{|c|c|c|}
\hline
l&P_l&S_l\\
\hline
x_0&x_0&x_0\\
x_1&x_1&x_1\\
x_0x_1&[x_0,x_1]&x_0x_1\\
x_0^2x_1&[x_0,[x_0,x_1]]&x_0^2x_1\\
x_0x_1^2&[[x_0,x_1],x_1]&x_0x_1^2\\
x_0^3x_1&[x_0,[x_0,[x_0,x_1]]]&x_0^3x_1\\
x_0^2x_1^2&[x_0,[[x_0,x_1],x_1]]&x_0^2x_1^2\\
x_0x_1^3&[[[x_0,x_1],x_1],x_1]&x_0x_1^3\\
x_0^4x_1&[x_0,[x_0,[x_0,[x_0,x_1]]]]&x_0^4x_1\\
x_0^3x_1^2&[x_0,[x_0,[[x_0,x_1],x_1]]]&x_0^3x_1^2\\
x_0^2x_1x_0x_1&[[x_0,[x_0,x_1]],[x_0,x_1]]&2x_0^3x_1^2+x_0^2x_1x_0x_1\\
x_0^2x_1^3&[x_0,[[[x_0,x_1],x_1],x_1]]&x_0^2x_1^3\\
x_0x_1x_0x_1^2&[[x_0,x_1],[[x_0,x_1],x_1]]&3x_0^2x_1^3+x_0x_1x_0x_1^2\\
x_0x_1^4&[[[[x_0,x_1],x_1],x_1],x_1]&x_0x_1^4\\
{x_0^5}x_1&[x_0,[x_0,[x_0,[x_0,[x_0,x_1]]]]]&x_0^5x_1\\\
x_0^4x_1^2&[x_0,[x_0,[x_0,[[x_0,x_1],x_1]]]]&x_0^4x_1^2\\
x_0^3x_1x_0x_1&[x_0,[[x_0,[x_0,x_1]],[x_0,x_1]]]&2x_0^4x_1^2+x_0^3x_1x_0x_1\\
x_0^3x_1^3&[x_0,[x_0,[[[x_0,x_1],x_1],x_1]]]&x_0^3x_1^3\\
x_0^2x_1x_0x_1^2&[x_0,[[x_0,x_1],[[x_0,x_1],x_1]]]&3x_0^3x_1^3+x_0^2x_1x_0x_1^2\\
x_0^2x_1^2x_0x_1&[[x_0,[[x_0,x_1],x_1]],[x_0,x_1]]&6x_0^3x_1^3+3x_0^2x_1x_0x_1^2+x_0^2x_1^2x_0x_1\\
x_0^2x_1^4&[x_0,[[[[x_0,x_1],x_1],x_1],x_1]]&x_0^2x_1^4\\
x_0x_1x_0x_1^3&[[x_0,x_1],[[[x_0,x_1],x_1],x_1]]&4x_0^2x_1^4+x_0x_1x_0x_1^3\\
x_0x_1^5&[[[[[x_0,x_1],x_1],x_1],x_1],x_1]&x_0x_1^5\\
\hline
\end{array}$$
\end{example}

\begin{remark}\label{weightX}
Using the notations in \eqref{piXY1}--\eqref{perrin} and extending Remark \ref{length}, since
\begin{eqnarray*}
\forall \lambda=x_{i_1}\cdots x_{i_r}\in\Lyn X\setminus\{x_0\},&
\abs{\lambda}=r
\end{eqnarray*}
and, by linear extension and by homogeneity in weight, that is length, of the polynomials $\{S_l\}_{l\in\Lyn X}$ and $\{P_l\}_{l\in\Lyn X}$, one has
\begin{eqnarray*}
\abs{S_{\lambda}}=r&\mbox{and}&\abs{P_{\lambda}}=r
\end{eqnarray*}
then the depth of $\zeta(S_{\lambda})$ is the depth of $\zeta(\lambda)$, that is $r$, as previously introduced in Section \ref{introduction}.
\end{remark}

In addition, $\ncp{A}{Y}$ is equipped with the unital associative commutative quasi-shuffle product recursively defined, for any $u,v,w\in Y^*$ and $y_i,y_j\in Y$, by \cite{SLC43,FPSAC97}
\begin{eqnarray}\label{stuffle}
w\stuffle 1_{Y^*}=1_{Y^*}\stuffle w=w&\mbox{and}&
y_iu\stuffle y_jv=y_i(u\stuffle y_jv)+y_j(y_iu\stuffle v)+y_{i+j}(u\stuffle v)
\end{eqnarray}
and its dual law, being a $\conc$-morphism, is given by
\begin{eqnarray}\label{Dstuffle}
\forall y_k\in Y,&\Delta_{\stuffle}(y_k)=y_k\otimes 1_{Y^*}+1_{Y^*}\otimes y_k+\displaystyle\sum_{i+j=k}y_i\otimes y_j.
\end{eqnarray}
Letting $\mathrm{Prim}(\calH_{\stuffle}(Y))=\span_{A}\{\pi_1(w)\}_{w\in Y^*}$ and $\pi_1$ be the eulerian projector defined\footnote{By \eqref{coproducts}, any letter $x\in\calX$ is primitive, for $\Delta_{\conc}$ and $\Delta_{\shuffle}$. By \eqref{Dstuffle}, the polynomials $\{\pi_1(y_k)\}_{k\ge2}$ and only the letter $y_1$ are primitive, for $\Delta_{\stuffle}$.}, for any $w\in Y^*$, by \cite{CM}
\begin{eqnarray}\label{piii_1}
\pi_1(w)=w+\sum_{k=2}^{(w)}\frac{(-1)^{k-1}}k\sum_{u_1,\ldots,u_k\in Y^+}\scal{w}{u_1\stuffle\ldots\stuffle u_k}u_1\ldots u_k.
\end{eqnarray}
these yield another connected, graded and co-commutative bialgebra being isomorphic to the enveloping algebra of the Lie algebra of primitive elements, \textit{i.e.}
\begin{eqnarray}\label{Hopfstuffle}
\calH_{\stuffle}(Y):=(\ncp{A}{Y},\conc,1_{Y^*},\Delta_{\stuffle})\cong\mathcal{U}(\mathrm{Prim}(\calH_{\stuffle}(Y))).
\end{eqnarray}

Let $\{\Pi_w\}_{w\in Y^*}$ be a linear basis, homogeneous in weight (\textit{i.e.} weight of each $w$), expanded by decreasing PBW after a basis $\{\Pi_l\}_{l\in \Lyn Y}$ of $\mathrm{Prim}(\calH_{\stuffle}(Y))$ and let $\{\Sigma_w\}_{w\in Y^*}$ (homogeneous in weight) be its dual basis containing the transcendence basis $\{\Sigma_l\}_{l\in\Lyn Y}$ of the quasi-shuffle algebra. The graded dual of $\calH_{\stuffle}(Y)$ is denoted as follows
\begin{eqnarray}
\calH_{\stuffle}^{\vee}(Y)=(\ncp{A}{Y},{\stuffle},1_{Y^*},\Delta_{\conc}).
\end{eqnarray}
The diagonal series ${\calD}_Y$, on $\calH_{\stuffle}(Y)$ is also factorized as follows \cite{CM} 
\begin{eqnarray}\label{diagonalY}
{\calD}_Y:=\sum_{w\in Y^*}w\otimes w=
\sum_{w\in Y^*}\Sigma_w\otimes\Pi_w
=\prod_{l\in\Lyn Y}^{\searrow}e^{\Sigma_l\otimes\Pi_l}.
\end{eqnarray}

\begin{theorem}[\cite{CM}]\label{isomorphy}
Let $\varphi_{\pi_1}:(\ncp{A}{Y},\conc,1_{Y^*})\longrightarrow(\ncp{A}{Y},\conc,1_{Y^*})$ maps
$y_k$ to $\pi_1(y_k)$. It is an automorphism of $\ncp{A}{Y}$ realizing an isomorphism of bialgebras
between $\calH_{\shuffle}(Y)$ and $\calH_{\stuffle}(Y)$. The following diagram commutes
\begin{center}
\begin{tikzcd}[column sep=3em]
\ncp{A}{Y}\ar[hook]{r}{\Delta_{\shuffle}}\ar[swap]{d}{\varphi_{\pi_1}}& 
\ncp{A}{Y}\otimes\ncp{A}{Y}\ar[]{d}{\varphi_{\pi_1}\otimes\varphi_{\pi_1}}\\
\ncp{A}{Y}
\ar[hook]{r}{\Delta_{\stuffle}}& 
\ncp{A}{Y}\otimes\ncp{A}{Y} 
\end{tikzcd}
\end{center}
and $\{\Pi_w\}_{w\in Y^*}$ (resp. $\{\Sigma_w\}_{w\in Y^*}$) is image of $\{P_w\}_{w\in Y^*}$ (resp. $\{S_w\}_{w\in Y^*}$) by
$\varphi_{\pi_1}$ (resp. $\check\varphi_{\pi_1}^{-1}$), where $\check\varphi_{\pi_1}$ is the adjoint of $\varphi_{\pi_1}$.
\end{theorem}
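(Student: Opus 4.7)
The plan is to verify the three assertions in order: (i) $\varphi_{\pi_1}$ is an automorphism of the concatenation algebra $(\ncp{A}{Y},\conc,1_{Y^*})$; (ii) the diagram commutes, so $\varphi_{\pi_1}$ is a morphism of bialgebras $\calH_\shuffle(Y)\to\calH_\stuffle(Y)$; (iii) $\varphi_{\pi_1}$ sends $P_w$ to $\Pi_w$ and its transpose inverse sends $S_w$ to $\Sigma_w$.

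For (i), since $\ncp{A}{Y}$ is the free associative algebra on $Y$, the assignment $y_k\mapsto\pi_1(y_k)$ extends uniquely to a concatenation morphism $\varphi_{\pi_1}$. I would establish invertibility by exploiting the length–triangularity visible in \eqref{piii_1}: writing out $\pi_1(y_k)$ one sees that all monomials $y_{i_1}\ldots y_{i_{k'}}$ appearing have weight $k$ with $k'\ge 1$, and the unique length-$1$ term is $y_k$ itself with coefficient $1$. Hence $\pi_1(y_k)=y_k+R_k$ where each monomial of $R_k$ has conc-length $\ge 2$ and factors $y_{i_j}$ with $i_j<k$. This lets me recover $y_k$ recursively on $k$ as a concatenation polynomial in $\{\pi_1(y_j)\}_{j\le k}$, giving an explicit two-sided inverse.

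For (ii), I observe that both $\Delta_\shuffle\colon\calH_\shuffle(Y)\to\calH_\shuffle(Y)^{\otimes 2}$ and $\Delta_\stuffle\colon\calH_\stuffle(Y)\to\calH_\stuffle(Y)^{\otimes 2}$ are morphisms for concatenation (by \eqref{coproducts} and the definition of $\Delta_\stuffle$ as a $\conc$-morphism), and $\varphi_{\pi_1}\otimes\varphi_{\pi_1}$ is by construction a $\conc\otimes\conc$-morphism. Consequently both sides of the diagram are $\conc$-morphisms from $\ncp{A}{Y}$ to $\ncp{A}{Y}\otimes\ncp{A}{Y}$, so equality reduces to checking it on the generators $y_k$. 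On a letter,
\[
(\varphi_{\pi_1}\otimes\varphi_{\pi_1})\Delta_\shuffle(y_k)=\pi_1(y_k)\otimes 1_{Y^*}+1_{Y^*}\otimes\pi_1(y_k),
\]
while the RHS is $\Delta_\stuffle(\pi_1(y_k))$. These coincide precisely because the eulerian projector $\pi_1$ lands in $\mathrm{Prim}(\calH_\stuffle(Y))$; this primitivity is exactly what \eqref{piii_1} is designed to guarantee and is the key input I would verify carefully (it is the main technical point). Combined with (i), this promotes $\varphi_{\pi_1}$ to a bialgebra isomorphism.

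For (iii), the recursion \eqref{P_w} for $\{P_w\}_{w\in Y^*}$ uses only the concatenation bracket and concatenation products, and the parallel recursion defining $\{\Pi_w\}_{w\in Y^*}$ in $\calH_\stuffle(Y)$ starts from $\Pi_{y_k}=\pi_1(y_k)=\varphi_{\pi_1}(P_{y_k})$ and follows exactly the same bracketing/PBW pattern on Lyndon words. Since $\varphi_{\pi_1}$ is an algebra automorphism for concatenation, a straightforward induction on Lyndon words and then on PBW-ordered products gives $\varphi_{\pi_1}(P_w)=\Pi_w$ for every $w\in Y^*$. For the dual statement, from $\scal{S_u}{P_w}=\delta_{u,w}$ and the definition of the adjoint $\check\varphi_{\pi_1}$,
\[
\scal{\check\varphi_{\pi_1}^{-1}(S_u)}{\Pi_w}=\scal{S_u}{\varphi_{\pi_1}^{-1}(\Pi_w)}=\scal{S_u}{P_w}=\delta_{u,w},
\]
so $\check\varphi_{\pi_1}^{-1}(S_w)=\Sigma_w$ by uniqueness of the dual basis. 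The main obstacle is (ii): the verification that $\pi_1(y_k)$ is primitive for $\Delta_\stuffle$, which is the non-formal content of the whole theorem; everything else is an algebraic unwinding.
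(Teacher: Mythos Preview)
The paper does not prove Theorem~\ref{isomorphy}; it is imported from \cite{CM} and simply stated, with the subsequent display \eqref{Pi_w} offered as an algorithmic by-product rather than a proof. There is therefore nothing in the present paper to compare your argument against.

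That said, your plan is correct and follows the standard route one finds in \cite{CM} and related sources. A couple of remarks. In (i), the triangularity you use is by \emph{weight} grading together with length filtration: each $\pi_1(y_k)$ is weight-homogeneous of weight $k$ with leading length-$1$ term $y_k$; this also guarantees that $\varphi_{\pi_1}$ is graded, which you need so that the adjoint $\check\varphi_{\pi_1}$ is well defined on each finite-dimensional weight component before you invoke duality in (iii). In (ii), you have identified the real content: the primitivity $\Delta_\stuffle(\pi_1(y_k))=\pi_1(y_k)\otimes1+1\otimes\pi_1(y_k)$. In a connected graded cocommutative bialgebra this is the defining property of the first Eulerian idempotent, so once one accepts the CQMM framework set up just before the theorem this step is not mysterious; but it is, as you say, where the substance lies. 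In (iii), your induction works because the recursions for $P_w$ and $\Pi_w$ in \eqref{P_w} and \eqref{Pi_w} are the \emph{same} Lie-bracket/PBW recursion on the $\conc$ side, differing only in the seed $P_{y_k}=y_k$ versus $\Pi_{y_k}=\pi_1(y_k)$; the apparently different recursion for $\Sigma_l$ in \eqref{Pi_w} is irrelevant to your argument, since you deduce $\Sigma_w=\check\varphi_{\pi_1}^{-1}(S_w)$ purely by duality.
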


Algorithmically, the bases of homogeneous in weight polynomials $\{P_w\}_{w\in Y^*}$ and
$\{S_w\}_{w\in Y^*}$ of, respectively, $\calU(\mathrm{Prim}(\calH_{\stuffle}(Y)))$
and $(\ncp{A}{Y},\shuffle,1_{Y^*})$ can be directly and recursively constructed as follows
(see \cite{Bui}, for examples by computer)
\begin{eqnarray}\label{Pi_w}
\left\{\begin{array}{rclrclll}
\Pi_{y_s}&=&\pi_1(y_s),&\Sigma_{y_s}&=&y_s
&\mbox{for }y_s\in Y,\\
\Pi_{l}&=&[\Pi_{l_1},\Pi_{l_2}],&\Sigma_l&=&\sum_{(*)}\frac{y_{s_{k_1}+\ldots+s_{k_i}}}{i!}\Sigma_{l_1\ldots l_n},
&\mbox{for }l\in\Lyn Y\setminus Y,\atop st(l)=(l_1,l_2),\\
\Pi_{w}&=&\Pi_{l_1}^{i_1}\ldots\Pi_{l_k}^{i_k},&\Sigma_w&=&\displaystyle\frac{\Sigma_{l_1}^{\stuffle i_1}\stuffle\ldots\stuffle\Sigma_{l_k}^{\stuffle i_k}}{i_1!\ldots i_k!},
&{\displaystyle\mbox{for }w=l_1^{i_1}\ldots l_k^{i_k},\mbox{ with }l_1,\ldots,\atop\displaystyle l_k\in\Lyn Y,l_1\succ\ldots\succ l_k,}
\end{array}\right.
\end{eqnarray}
In $(*)$, the sum is taken over all $\{k_1,\ldots,k_i\}\subset\{1,\ldots,k\}$
and $l_1\succeq\ldots\succeq l_n$ such that $(y_{s_1},\ldots,y_{s_k})$ is derived from $(y_{s_{k_1}},\ldots,y_{s_{k_i}},l_1,\ldots,l_n)$ by transitive closure of the relations on standard sequences \cite{SLC74,reutenauer}.

\begin{remark}\label{ordered2}
\begin{enumerate}
\item For any $n\ge1$, one has
\begin{eqnarray*}
(A1_{Y^*}\oplus(Y\setminus\{y_1\})\ncp{A}{Y},\stuffle,1_{X^*})\cong&
A[\Lyn Y\setminus\{y_1\}]&=A[\{\Sigma_l\}_{l\in\Lyn Y\setminus\{y_1\}}].
\end{eqnarray*}

\item The ordering over $\Lyn Y$ elicits the ordering over$\{\Sigma_l\}_{l\in\Lyn Y}$ and $\{\Pi_l\}_{l\in\Lyn Y}$.

\item By \eqref{triangular2}, the following assertions are equivalent
\begin{enumerate}
\item $\Lyn Y$ is a pure transcendence basis of $(\ncp{A}{Y},\stuffle,1_{Y^*})$ (\textit{i.e.} an extended Radforf theorem \cite{acta,VJM,hoffman2}),
\item $\{\Sigma_l\}_{l\in\Lyn Y}$)is a pure transcendence basis of $(\ncp{A}{Y},\stuffle,1_{Y^*})$.
\end{enumerate}
\end{enumerate}
\end{remark}

\begin{example}[\cite{VJM}]\label{Pi_lSigma_l}
For $Y=\{y_k\}_{k\ge1}$, one has
$$\begin{array}{|c|c|c|}
\hline
w&\Pi_w&\Sigma_w\\
\hline
y_2&y_2-\frac{1}{2}y_1^2&y_2\\
y_1^2&y_1^2&\frac{1}{2}y_2+y_1^2\\
y_3&y_3-\frac{1}{2}y_1y_2-\frac{1}{2}y_2y_1+\frac{1}{3}y_1^3&y_3\\
y_2y_1&y_2y_1-y_2y_1&\frac{1}{2}y_3+y_2y_1\\
y_1y_2&y_2y_1-\frac{1}{2}y_1^3&y_2y_1+\frac{1}{2}y_3\\
y_1^3&y_1^3&\frac{1}{6}y_3+\frac{1}{2}y_2y_1+\frac{1}{2}y_1y_2+y_1^3\\
y_4&y_4-\frac{1}{2}y_1y_3-\frac{1}{2}y_2^2-\frac{1}{2}y_3y_1&y_4\\
&+\frac{1}{3}y_1^2y_2+\frac{1}{3}y_1y_2y_1+\frac{1}{3}y_2y_1^2-\frac{1}{4}y_1^4&\\
y_3y_1&y_3y_1-\frac{1}{2}y_2y_1^2-y_1y_3+\frac{1}{2}y_1^2y_2&\frac{1}{2}y_4+y_3y_1\\
y_2^2&y_2^2-\frac{1}{2}y_2y_1^2-\frac{1}{2}y_1^2y_2+\frac{1}{4}y_1^4&\frac{1}{2}y_4+y_2^2\\
y_2y_1^2&y_2y_1^2-2\,y_1y_2y_1+y_1^2y_2&\frac{1}{6}y_4+\frac{1}{2}y_3y_1+\frac{1}{2}y_2^2+y_2y_1^2\\
y_1y_3&y_1y_3-\frac{1}{2}y_1^2y_2-\frac{1}{2}y_1y_2y_1+\frac{1}{3}y_1^4&y_4+y_3y_1+y_1y_3\\
y_1y_2y_1&y_1y_2y_1-y_1^2y_2&\frac{1}{2}y_4+\frac{1}{2}y_3y_1+y_2^2\\
&&+y_2y_1^2+\frac{1}{2}y_1y_3+y_1y_2y_1\\
y_1^2y_2&y_1^2y_2-\frac{1}{2}y_1^4&\frac{1}{2}y_4+y_3y_1+y_2^2+y_2y_1^2\\
&&+y_1y_3+y_1y_2y_1+y_1^2y_2\\
y_1^4&y_1^4&\frac{1}{24}y_4+\frac{1}{6}y_3y_1+\frac{1}{4}y_2^2+\frac{1}{2}y_2y_1^2\\
&&+\frac{1}{6}y_1y_3+\frac{1}{2}y_1y_2y_1+\frac{1}{2}y_1^2y_2+y_1^4\\
\hline
\end{array}$$
\end{example}

The homogeneous in weight polynomials $\{\Sigma_l\}_{l\in\Lyn Y}$ (resp. $\{\Pi_l\}_{l\in\Lyn Y}$) are also lower (resp. upper) triangular, \textit{i.e.} for any $l\in\Lyn X$ and $l'\in\Lyn Y$, one has \cite{acta,VJM}
\begin{eqnarray}\label{triangular2}
\Sigma_{l'}=l'+\sum_{v\succ l',(v)=(l')}b_vv,\ b_v\in A&\mbox{and}&
\Pi_{l'}=l'+\sum_{v\prec l',(v)=(l')}d_vv,\ d_v\in A.
\end{eqnarray}

\begin{remark}\label{weightY}
Extending Remark \ref{length} and using the notations in \eqref{piXY1}--\eqref{perrin}, since
\begin{eqnarray*}
\forall \lambda=y_{i_1}\cdots y_{i_r}\in\Lyn Y,&(\lambda)=\abs{\pi_X(\lambda)}=i_1+\cdots+i_r
\end{eqnarray*}
and, by linear extension and by homogeneity (in weight) of the polynomials $\{\Sigma_l\}_{l\in\Lyn Y}$ and $\{\Pi_l\}_{l\in\Lyn Y}$, one has
\begin{eqnarray*}
(\Sigma_{\lambda})=\abs{\pi_X(\lambda)}=i_1+\cdots+i_r,
&\mbox{and}&(\Pi_{\lambda})=\abs{\pi_X(\lambda)}=i_1+\cdots+i_r
\end{eqnarray*}
then the weight of $\zeta(\Sigma_{\lambda})$ is the weight of $\zeta(\lambda)$, that is $i_1+\cdots+i_r$, as previously introduced in Section \ref{introduction}.
\end{remark}

\section{Polylogarithms, harmonic sums and polyzetas viewed as functions on free monoids and their graphs}\label{Indexation}
\begin{theorem}[\cite{CM}]\label{structure1}
With the notations given in \eqref{def1def2} and \eqref{indexation}, the following morphisms of algebras are \textit{bijective}
\begin{eqnarray*}
\H_{\bullet}:(\Q\pol{Y},\stuffle,1_{Y^*})&\hooklongtwoheadrightarrow&
(\mathrm{span}_{\Q}\{\H_w\}_{w\in Y^*},\times,1),\\
y_{s_1}\cdots y_{s_k}&\longmapsto&\H_{s_1,\ldots,s_r},\\
\Li_{\bullet}:(\QX,\shuffle,1_{X^*})&\hooklongtwoheadrightarrow&
(\mathrm{span}_{\Q}\{\Li_w\}_{w\in X^*},\times,1_{\widetilde{\C\setminus\{0,1\}}}),\\
x_0&\longmapsto&\log z,\\
x_0x_1^{s_1-1}\ldots x_0x_1^{s_k-1}&\longmapsto&\Li_{s_1,\ldots,s_r}.
\end{eqnarray*}
Or equivalently, $\{\H_w\}_{w\in Y^*}$ (resp. $\{\H_{\Sigma_l}\}_{l\in\Lyn Y}$ and $\{\H_l\}_{l\in\Lyn Y}$) and $\{\Li_w\}_{w\in X^*}$
(resp. $\{\Li_{S_l}\}_{l\in\Lyn X}$ and $\{\Li_l\}_{l\in\Lyn X}$) are linearly (resp. algebraically) independent over $\Q$.
\end{theorem}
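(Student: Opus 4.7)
The plan is to split the proof into three stages: verify that $\Li_\bullet$ and $\H_\bullet$ are indeed morphisms of algebras, establish linear independence of the word-indexed families $\{\Li_w\}_{w\in X^*}$ and $\{\H_w\}_{w\in Y^*}$ (which is the equivalent formulation of injectivity), and derive the algebraic independence of the Lyndon-indexed families by combining Radford's theorem with the triangular relations \eqref{triangular1} and \eqref{triangular2}.

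For the morphism property, for $\H_\bullet$ I would proceed by induction on $|u|+|v|$, matching the three-term recursion \eqref{stuffle} against the obvious decomposition of $\H_u(n)\H_v(n)$ obtained by splitting pairs of outermost summation indices into the cases $a_1>b_1$, $a_1<b_1$, or $a_1=b_1$, the equality case being responsible for the collision term $y_{i+j}(u\stuffle v)$. For $\Li_\bullet$, I would appeal to Chen's iterated-integral shuffle formula: setting $\omega_0=dz/z$ and $\omega_1=dz/(1-z)$, each $\Li_w(z)$ with $w\in X^*$ is (extending the convention $\Li_{x_0}(z)=\log z$ to $\Li_{x_0^k}(z)=(\log z)^k/k!$) the Chen iterated integral along $[0,z]$ with the differential forms prescribed by $w$, and the product of two such integrals expands precisely as the shuffle.

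The injectivity of $\Li_\bullet$ is the genuinely analytic input. The cleanest route goes via the Chen series $L(z)=\sum_{w\in X^*}\Li_w(z)\,w$, the unique grouplike solution of the noncommutative KZ equation $dL=(x_0\omega_0+x_1\omega_1)L$ with boundary behavior $L(z)\sim e^{x_0\log z}$ as $z\to 0^+$. A hypothetical relation $\sum_w c_w\Li_w=0$ exhibits a nonzero linear form annihilating $L$, but the expansion near $z=0$ is visibly triangular: the leading $x_0$-block of $w$ controls the exponent of $\log z$, and the remaining letters control an analytic power series with distinct first nonzero monomial, so all $c_w$ must vanish. Injectivity of $\H_\bullet$ then follows from \eqref{gs}: a linear relation $\sum_w c_w\H_w=0$ among arithmetic functions amounts, via generating series, to $\sum_w c_w\Li_w(z)=0$ on the convergent words $\pi_X(Y^*)\subset X^*$, and the already established injectivity of $\Li_\bullet$ gives $c_w=0$.

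The algebraic independence of the Lyndon-indexed families is then formal. Radford's theorem identifies $(\Q\pol{X},\shuffle,1_{X^*})$ with the free commutative $\Q$-algebra on the transcendence basis $\{S_l\}_{l\in\Lyn X}$; an injective morphism of commutative algebras sends a transcendence basis to an algebraically independent family, yielding the algebraic independence of $\{\Li_{S_l}\}_{l\in\Lyn X}$. Transfer to $\{\Li_l\}_{l\in\Lyn X}$ uses \eqref{triangular1}, which gives a unitriangular change of coordinates (among Lyndon words of equal weight) between $\{S_l\}$ and $\{l\}$ in the Lyndon-indexed polynomial algebra; the same argument with \eqref{triangular2} handles $\{\H_{\Sigma_l}\}_{l\in\Lyn Y}$ and $\{\H_l\}_{l\in\Lyn Y}$. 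The main obstacle is thus concentrated in the analytic injectivity of $\Li_\bullet$: all other steps are formal consequences of the algebraic framework established in Section \ref{Combinatorics}, while this one rests on Chen's linear independence theorem for iterated integrals of linearly independent $1$-forms.
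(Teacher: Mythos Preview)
The paper does not prove Theorem~\ref{structure1}; it is quoted from \cite{CM} and used as a black box, so there is no in-text argument to compare your proposal against. Judged on its own, your outline is correct and follows the standard route to this result.

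A few remarks. Your reduction of the injectivity of $\H_\bullet$ to that of $\Li_\bullet$ via \eqref{gs} is valid, but note that $\pi_X(Y^*)=\{1_{X^*}\}\cup X^*x_1$ covers \emph{all} words of $Y^*$, not just the ``convergent'' ones (those with $s_1\ge2$); the identity \eqref{gs} holds formally for every $(s_1,\ldots,s_r)\in\N_{\ge1}^r$. For the analytic core, your asymptotic-at-$0$ sketch can be made rigorous using the factorization $\L(z)=e^{x_0\log z}\tilde\L(z)$ with $\tilde\L$ holomorphic at $0$ and $\tilde\L(0)=1_{X^*}$ (this is exactly \eqref{behaviorsL}); separating by powers of $\log z$ reduces to the linear independence of the Taylor series $\{\langle\tilde\L,v\rangle\}_{v\in\{1_{X^*}\}\cup x_1X^*}$, which is an exercise in comparing leading monomials. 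Alternatively, the differential descent you allude to (differentiate a hypothetical relation, use that $\omega_0,\omega_1$ have poles at distinct points to split, induct on length) is shorter once one observes that $(1-z)A(z)+zB(z)=0$ with $A,B$ in the span of shorter $\Li_u$'s forces $A=B=0$ by evaluating the original identity along loops around $0$ and $1$ (monodromy). Either way this is Chen's linear-independence theorem for iterated integrals of the $\C$-independent forms $\omega_0,\omega_1$, as you say.

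The passage from linear independence of $\{\Li_w\}_{w\in X^*}$ and $\{\H_w\}_{w\in Y^*}$ to algebraic independence of the Lyndon-indexed families via Radford and the triangularities \eqref{triangular1}--\eqref{triangular2} is purely formal and exactly right.
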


\begin{corollary}[\cite{CM}]\label{zeta}
Hence, with the notations given in \eqref{Image} and \eqref{convenience}, the following polymorphism is \textit{surjective}
\begin{eqnarray*}
\zeta:\left\{{(\Q1_{Y^*}\oplus(Y\setminus\{y_1\})\Q\pol{Y},\stuffle,1_{Y^*})\atop
(\Q1_{X^*}\oplus x_0\Q\pol{X}x_1,\shuffle,1_{X^*})}\right\}
&\longtwoheadrightarrow&(\calZ,\times,1),\\
\left\{{y_{s_1}\ldots y_{s_k}\atop x_0x_1^{s_1-1}\ldots x_0x_1^{s_k-1}}\right\}
&\longmapsto&\zeta(s_1,\ldots,s_r),
\end{eqnarray*}

Moreover, the $\Q$-algebra $(\calZ,\times,1)$ is a subalgebra of $(\R,\times,1)$ and is generated by $\{\zeta(l)\}_{l\in\Lyn\calX\setminus\gDIV}$, or equivalently by $\{\zeta(\Sigma_l)\}_{l\in\Lyn Y\setminus\{y_1\}}$ or by $\{\zeta(S_l)\}_{l\in\Lyn X\setminus X}$. 
\end{corollary}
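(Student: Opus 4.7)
The plan is to verify that the stated domains are subalgebras on which the defining limit of $\zeta$ exists, to deduce the morphism property by passing to the limit in Theorem \ref{structure1}, and then to translate the generator assertion via Remark \ref{ordered}. First I would check that if $u=x_0u'x_1$ and $v=x_0v'x_1$, the recursion \eqref{shuffle} produces only words beginning with $x_0$ and ending with $x_1$, so $\Q1_{X^*}\oplus x_0\QX x_1$ is a $\shuffle$-subalgebra. Symmetrically, if $u,v$ begin with letters in $Y\setminus\{y_1\}$, the recursion \eqref{stuffle} shows that every word in $u\stuffle v$ begins with some $y_i$ with $i\geq 2$ (either a leading index of $u$ or of $v$, or the sum of two such), so $\Q1_{Y^*}\oplus(Y\setminus\{y_1\})\QY$ is a $\stuffle$-subalgebra. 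These subalgebras are precisely the convergence domains for \eqref{zetavalues}; an Abel-type continuity argument applied to the morphisms of Theorem \ref{structure1} then yields $\zeta(u\shuffle v)=\zeta(u)\zeta(v)$ on the $X$-side and $\zeta(u\stuffle v)=\zeta(u)\zeta(v)$ on the $Y$-side, the two limits coinciding on their common image by \eqref{zetavalues}. Surjectivity onto $\calZ$ is then immediate from the definition $\calZ=\mathrm{span}_{\Q}\{\zeta(w)\}_{w\in\CONV}$.

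For the generator claim, Remark \ref{ordered} identifies the two convergent subalgebras with the polynomial $\Q$-algebras $\Q[\{S_l\}_{l\in\Lyn X\setminus X}]$ and $\Q[\{\Sigma_l\}_{l\in\Lyn Y\setminus\{y_1\}}]$, respectively. Applying the surjective algebra morphism $\zeta$ to these generating families, and using the triangularity relations \eqref{triangular1} and \eqref{triangular2} to pass freely between the Lyndon basis and the $S_l$ (resp.\ $\Sigma_l$) basis, yields the three equivalent generating systems $\{\zeta(l)\}_{l\in\Lyn\calX\setminus\gDIV}$, $\{\zeta(S_l)\}_{l\in\Lyn X\setminus X}$ and $\{\zeta(\Sigma_l)\}_{l\in\Lyn Y\setminus\{y_1\}}$ for $\calZ$.

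To rule out algebraic independence over $\Q$ of these generators, I would exhibit one concrete nontrivial polynomial relation. Euler's identity \eqref{zeta(s,1)} with $s=2$ reads $\zeta(2,1)=\zeta(3)$. Both $x_0x_1^2$ and $x_0^2x_1$ are Lyndon (each has leading letter $x_0$ and is therefore strictly smaller than every proper suffix, which begins with $x_1$), so $\zeta(x_0x_1^2)-\zeta(x_0^2x_1)=0$ is a nontrivial $\Q$-linear, hence algebraic, dependence among $\{\zeta(l)\}_{l\in\Lyn X\setminus X}$. The main delicate point in the whole proof is the Abel-type exchange of limit and sum needed to transfer the multiplicative morphism property of Theorem \ref{structure1} down to $\zeta$; all remaining assertions reduce to direct applications of that theorem, Remark \ref{ordered}, and the explicit relation \eqref{zeta(s,1)}.
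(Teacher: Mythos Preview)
Your argument is correct and matches how the paper frames the result: the corollary is stated without an in-paper proof (citing \cite{CM}) as an immediate consequence of Theorem \ref{structure1}. Your derivation---closing the convergent subalgebras under $\shuffle$ and $\stuffle$ via the recursions \eqref{shuffle} and \eqref{stuffle}, passing to the limit through \eqref{zetavalues}, invoking Remark \ref{ordered} for the generators, and exhibiting the Euler relation $\zeta(2,1)=\zeta(3)$ from \eqref{zeta(s,1)} for non-independence---is precisely the chain of in-paper facts that makes it a corollary.
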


Using the notations in Conjecture \ref{Zconj}, it follows also that
\begin{eqnarray}\label{prod}
\forall k,k'\ge1,&\calZ_k\calZ_{k'}\subset\calZ_{k+k'}.
\end{eqnarray}

\begin{remark}
The polyzetas $\{\zeta(l)\}_{l\in\Lyn\calX\setminus\gDIV}$ are not necessary algebraically independent over $\Q$ (see the Euler's relations in \eqref{zeta(s,1)}), contrary to $\{\H_l\}_{l\in\Lyn Y}$ and $\{\Li_l\}_{l\in\Lyn X}$  (resp. $\{\H_{\Sigma_l}\}_{l\in\Lyn Y}$ and $\{\Li_{S_l}\}_{l\in\Lyn X}$). Hence, it is legitimate to determine the $\Q$-algebraic relations among them. So does \cite{FPSAC97}, in which it is conjectured that the $\Q$-algebra $(\calZ,\times,1)$ is $\Q$-free and then the objective of this present work is to construct effectively an algebraic basis for $(\calZ,\times,1)$ by expliciting the \textit{non} $\Q$-algebraic relations among $\{\zeta(\Sigma_l)\}_{l\in\Lyn Y\setminus\{y_1\}}$ and $\{\zeta(S_l)\}_{l\in\Lyn X\setminus X}$) (see Proposition \ref{directsum} below).
\end{remark}

\begin{example}\label{Pi_lSigma_lsuite}
With the expressions of $\{\Sigma_l\}_{l\in\Lyn Y}$ in Example \ref{Pi_lSigma_l}, using the $\zeta$ polymorphism and \eqref{indexation}, one has
\begin{eqnarray*}
\zeta(\Sigma_{y_2})&=&\zeta(y_2)\\
&=&\zeta(2),\cr
\zeta(\Sigma_{y_3})&=&\zeta(y_3)\\
&=&\zeta(3),\cr
\zeta(\Sigma_{y_2y_1})&=&\frac{1}{2}\zeta(y_3)+\zeta(y_2y_1)\\
&=&\frac{1}{2}\zeta(3)+\zeta(2,1),\cr
\zeta(\Sigma_{y_4})&=&\zeta(y_4)\\
&=&\zeta(4),\cr
\zeta(\Sigma_{y_3y_1})&=&\frac{1}{2}\zeta(y_4)+\zeta(y_3y_1)\\
&=&\frac{1}{2}\zeta(4)+\zeta(3,1),\cr
\zeta(\Sigma_{y_2^2})&=&\frac{1}{2}\zeta(y_4)+\zeta(y_2y_2)\\
&=&\frac{1}{2}\zeta(4)+\zeta(2,2),\cr
\zeta(\Sigma_{y_2y_1^2})&=&\frac{1}{6}\zeta(y_4)+\frac{1}{2}\zeta(y_3y_1)+\frac{1}{2}\zeta(y_2^2)+\zeta(y_2y_1^2)\\
&=&\frac{1}{6}\zeta(4)+\frac{1}{2}\zeta(3,1)+\frac{1}{2}\zeta(2,2)+\zeta(2,1,1),\cr
\zeta(\Sigma_{y_1y_3})&=&\zeta(y_4)+\zeta(y_3y_1)+\zeta(y_1y_3)\\
&=&\zeta(4)+\zeta(3,1)+\zeta(1,3),\cr
\zeta(\Sigma_{y_1y_2y_1})&=&\frac{1}{2}\zeta(y_4)+\frac{1}{2}\zeta(y_3y_1)+\zeta(y_2^2)+\zeta(y_2y_1^2)+\frac{1}{2}\zeta(y_1y_3)+\zeta(y_1y_2y_1)\\
&=&\frac{1}{2}\zeta(4)+\frac{1}{2}\zeta(3,1)+\zeta(2,2)+\zeta(2,1,1)+\frac{1}{2}\zeta(1,3)+\zeta(1,2,1).
\end{eqnarray*}
\end{example}

\begin{example}\label{P_lS_lsuite}
With the expressions of $\{S_l\}_{l\in\Lyn X}$ in Example \ref{P_lS_l}, using the $\zeta$ polymorphism and \eqref{indexation}, one has
\begin{eqnarray*}
\zeta(S_{x_0x_1})&=&\zeta(x_0x_1)\\&=&\zeta(2),\cr
\zeta(S_{x_0^2x_1})&=&\zeta(x_0^2x_1)\\&=&\zeta(3),\cr
\zeta(S_{x_0x_1^2})&=&\zeta(x_0x_1^2)\\&=&\zeta(2,1),\cr
\zeta(S_{x_0^3x_1})&=&\zeta(x_0^3x_1)\\&=&\zeta(4),\cr
\zeta(S_{x_0^2x_1^2})&=&\zeta(x_0^2x_1^2\\&=&\zeta(3,1),\cr
\zeta(S_{x_0x_1^3})&=&\zeta(x_0x_1^3)\\&=&\zeta(2,1,1),\cr
\zeta(S_{x_0^4x_1})&=&\zeta(x_0^4x_1)\\&=&\zeta(5),\cr
\zeta(S_{x_0^3x_1^2})&=&\zeta(x_0^3x_1^2)\\&=&\zeta(4,1),\cr
\zeta(S_{x_0^2x_1x_0x_1})&=&2\zeta(x_0^3x_1^2)+\zeta(x_0^2x_1x_0x_1)\\&=&2\zeta(4,1)+\zeta(3,2),\cr
\zeta(S_{x_0^2x_1^3})&=&\zeta(x_0^2x_1^3)\\&=&\zeta(3,1,1),\cr
\zeta(S_{x_0x_1x_0x_1^2})&=&3\zeta(x_0^2x_1^3)+\zeta(x_0x_1x_0x_1^2)\\&=&3\zeta(3,1,1)+\zeta(2,2,1),\cr
\zeta(S_{x_0x_1^4})&=&\zeta(x_0x_1^4)\\&=&\zeta(2,1,1,1),\cr
\zeta(S_{{x_0^5}x_1})&=&\zeta(x_0^5x_1)\\&=&\zeta(6),\cr
\zeta(S_{x_0^4x_1^2})&=&\zeta(x_0^4x_1^2)\\&=&\zeta(5,1),\cr
\zeta(S_{x_0^3x_1x_0x_1})&=&2\zeta(x_0^4x_1^2)+\zeta(x_0^3x_1x_0x_1)\\&=&2\zeta(5,1)+\zeta(4,2),\cr
\zeta(S_{x_0^3x_1^3})&=&\zeta(x_0^3x_1^3)\\&=&\zeta(4,1,1),\cr
\zeta(S_{x_0^2x_1x_0x_1^2})&=&3\zeta(x_0^3x_1^3)+\zeta(x_0^2x_1x_0x_1^2)\\&=&3\zeta(4,1,1)+\zeta(3,2,1),\cr
\zeta(S_{x_0^2x_1^2x_0x_1})&=&6\zeta(x_0^3x_1^3)+3\zeta(x_0^2x_1x_0x_1^2)+\zeta(x_0^2x_1^2x_0x_1)\\&=&6\zeta(4,1,1)+3\zeta(3,2,1)+\zeta(3,1,2),\cr
\zeta(S_{x_0^2x_1^4})&=&\zeta(x_0^2x_1^4)\\&=&\zeta(2,1,1,1),\cr
\zeta(S_{x_0x_1x_0x_1^3})&=&4\zeta(x_0^2x_1^4)+\zeta(x_0x_1x_0x_1^3)\\&=&4\zeta(3,1,1,1)+\zeta(2,2,1,1),\cr
\zeta(S_{x_0x_1^5})&=&\zeta(x_0x_1^5)\\&=&\zeta(2,1,1,1,1).
\end{eqnarray*}
\end{example}

Now, we are in situation to consider the graphs of $\Li_{\bullet}$ \cite{FPSAC98} and of $\H_{\bullet}$ \cite{Daresbury,JSC}, using the diagonal series ${\mathcal D}_X$ and ${\mathcal D}_Y$, as follows
\begin{eqnarray}\label{NCGS}
\L:=\sum_{w\in X}\Li_ww=(\Li_{\bullet}\otimes\mathrm{Id}){\mathcal D}_X
&\mbox{and}&
\H:=\sum_{w\in Y}\H_ww=(\H_{\bullet}\otimes\mathrm{Id}){\mathcal D}_Y.
\end{eqnarray}
Then by \eqref{diagonalX} and \eqref{diagonalY} these graphs are grouplike series respectively, for $\shuffle$ and $\stuffle$, and are put in the MRS forms, leading to the definitions of grouplike series $Z_{\shuffle}$ \cite{FPSAC98} and $Z_{\stuffle}$ \cite{acta,VJM,CM} respectively, for $\shuffle$ and $\stuffle$, by setting
\begin{eqnarray}
\L=\prod_{l\in\Lyn X}^{\searrow}e^{\Li_{S_l}P_l}
&\mbox{and let}&
Z_{\shuffle}:=\prod_{l\in\Lyn X\setminus X}^{\searrow}e^{\zeta(S_l)P_l},\label{Zshuffle}\\
\H=\prod_{l\in\Lyn Y}^{\searrow}e^{\H_{\Sigma_l}\Pi_l}
&\mbox{and let}&
Z_{\stuffle}:=\prod_{l\in\Lyn Y\setminus\{y_1\}}^{\searrow}e^{\zeta(\Sigma_l)\Pi_l}.\label{Zstuffle}
\end{eqnarray}

Let also $Z_{\gamma}$ be the following noncommutative series
\begin{eqnarray}\label{Zgamma}
Z_{\gamma}:=\sum_{w\in Y^*}\gamma_ww,
\end{eqnarray}
where\footnote{$\gamma_{y_1}$ is nothing other than the Euler constant, usual denoted by $\gamma$.} $\gamma_w$ denotes the finite part ($\mathrm{f.p.}$ for shorted)\footnote{The notations in \eqref{gamma_bullet} and \eqref{regularization1}-\eqref{regularization2} are volontary used to justify and to clarify various regularizations discussed in Remark \ref{regularizations} bellow.} of the asymptotic expansion of the hamonic sum $\H_w$ in the comparison scale $\{n^a\log^b(n)\}_{a\in\Z,b\in\N}$, for $n\to+\infty$:
\begin{eqnarray}\label{gamma_bullet}
\scal{Z_{\gamma}}{w}=\gamma_w:=\mathop{\mathrm{f.p.}}_{n\rightarrow+\infty}\H_w(n),\{n^a\log^b(n)\}_{a\in\Z\atop b\in\N}.
\end{eqnarray}
It follows that $Z_{\gamma}$ is the graph of the following $\stuffle$-character \cite{Daresbury,JSC}
\begin{eqnarray}\label{gamma_character}
\gamma_{\bullet}:(\Q\pol{Y},\stuffle,1_{Y^*})&\longrightarrow&({\mathcal Z}[\gamma],\times,1),\\
y_1&\longmapsto&\gamma,\\
\forall l\in\Lyn Y\setminus\{y_1\}&\longmapsto&\gamma_{l}=\zeta(l),
\end{eqnarray}
for which, for any $l_1$ and $l_2\in\Lyn Y$, one also has \cite{Daresbury,JSC}
\begin{eqnarray}
\gamma_{l_1\stuffle l_2}=\gamma_{l_1}\gamma_{l_2}&\mbox{and}&
\gamma_{\Sigma_{l_1}\stuffle\Sigma_{l_2}}=\gamma_{\Sigma_{l_1}}\gamma_{\Sigma_{l_2}}.
\end{eqnarray}
Therefore, $Z_{\gamma}$ is grouplike, for $\Delta_{\stuffle}$, and one has \cite{acta,VJM}
\begin{eqnarray}
Z_{\gamma}=e^{\gamma y_1}Z_{\stuffle}.
\end{eqnarray}

On the other hand, $Z_{\stuffle}$ and $Z_{\shuffle}$ are also viewed as graphs of the following characters, extending the $\zeta$ polymorphism, \textit{i.e.}\footnote{With the comparison scale $\{(1-z)^a\log^b((1-z)^{-1})\}_{a\in\Z,b\in\N}$ (resp. $\{n^a\H_1^b(n)\}_{a\in\Z,b\in\N}$), the finite part of the singular (resp. asymptotic) expansion of $\Li_u$ (resp. $\H_v$) equals $\zeta_{\shuffle}(u)$ (resp. $\zeta_{\stuffle}(v)$), while with the comparison scale $\{n^a\log^b(n)\}_{a\in\Z,b\in\N}$, the finite part of the asymptotic expansion of $\H_w$ equals $\gamma_w$ (see \eqref{Zgamma} above).}
\begin{eqnarray}\label{regularization0}
\zeta_{\stuffle}:(\Q\pol{Y},\stuffle,1_{Y^*})\longrightarrow({\mathcal Z},\times,1),&
\zeta_{\shuffle}:(\QX,\shuffle,1_{X^*})\longrightarrow({\mathcal Z},\times,1),
\end{eqnarray}
and, for any word $w$ in $Y^*$ (resp $X^*$), each polyzeta $\zeta_{\stuffle}(w)$ (resp. $\zeta_{\shuffle}(w)$) represents the finite part of the asymptotic expansion (resp. functional expansion) of the hamonic sum $\H_w$ (resp. polylogarithm $\Li_w$) in the comparison scale $\{n^a\H_1^b(n)\}_{a\in\Z,b\in\N}$ (resp. $\{(1-z)^a\log^b((1-z)^{-1})\}_{a\in\Z,b\in\N}$), for $n\to+\infty$ (resp. $z\to1$):
\begin{eqnarray}
\zeta_{\stuffle}(w)=\scal{Z_{\stuffle}}{w}=
&\mathop{\mathrm{f.p.}}\limits_{n\rightarrow+\infty}\H_w(n),
&\{n^a\H_1^b(n)\}_{a\in\Z\atop b\in\N},\label{regularization1}\\
\zeta_{\shuffle}(w)=\scal{Z_{\shuffle}}{w}=
&\mathop{\mathrm{f.p.}}\limits_{z\rightarrow1}\Li_w(z),
&\{(1-z)^a\log^b((1-z)^{-1})\}_{a\in\Z\atop b\in\N},.\label{regularization2}
\end{eqnarray}
Hence, in virtue of \eqref{zetavalues}, \eqref{perrin} and \eqref{gamma_character}, the following convergent polyzetas equal:
\begin{eqnarray}\label{reg}
\forall l\in\Lyn Y\setminus\{y_1\},&
\zeta_{\stuffle}(l)=\zeta_{\shuffle}(\pi_X(l))=\zeta(l)=\gamma_l
\end{eqnarray}
and, on the other hand, one has for the divergent generators of weight $1$,
\begin{eqnarray}
\zeta_{\shuffle}(x_0)=\Li_{x_0}(1)=\log(1)=0
\end{eqnarray}
and, as consequence of the finte parts in \eqref{regularization1}--\eqref{regularization2}\footnote{Studying the commutative generating series of the double polyzetas $\{\zeta(s_1,s_2)\}_{s_1>1,s_2\ge1}$, Cartier used the convention $\zeta(1)=0$, in \cite{Cartier_double}.},
\begin{eqnarray}\label{simul_regu}
\zeta_{\shuffle}(x_1)=\zeta_{\stuffle}(y_1)=0.
\end{eqnarray}

\begin{proposition}\label{noyeaux}
For any $l_1$ and $l_2\in\Lyn X\setminus\{x_0\}$ (resp. $\Lyn Y$), by \eqref{piXY1}--\eqref{piXY2}, one has\footnote{\label{different} For any $l_1$ and $l_2\in\Lyn X\setminus\{x_0\}$ (resp. $\Lyn Y$), one also respectively has\\
$\begin{array}{c}
\zeta_{\shuffle}(S_{l_1}\shuffle S_{l_2})
=\zeta_{\shuffle}(S_{l_1})\zeta_{\shuffle}(S_{l_2})
=\zeta_{\stuffle}(\pi_Y(S_{l_1}))\zeta_{\stuffle}(\pi_Y(S_{l_2}))
=\zeta_{\stuffle}(\pi_Y(S_{l_1})\stuffle\pi_Y(S_{l_2})),\\
\zeta_{\stuffle}(\Sigma_{l_1}\stuffle\Sigma_{l_2})
=\zeta_{\stuffle}(\Sigma_{l_1})\zeta_{\stuffle}(\Sigma_{l_2})
=\zeta_{\shuffle}(\pi_X(\Sigma_{l_1}))\zeta_{\shuffle}(\pi_X(\Sigma_{l_2}))
=\zeta_{\shuffle}(\pi_X(\Sigma_{l_1})\shuffle\pi_X(\Sigma_{l_2})).
\end{array}$

But, for any $l\in\Lyn X\setminus\{x_0\}$ (resp. $\Lyn Y$), in general and unfortunately, $\pi_Y(S_l)\neq\Sigma_l$ (resp. $\pi_X(\Sigma_l)\neq S_l$) and then $\zeta_{\stuffle}(\pi_Y(S_l))\neq\zeta_{\stuffle}(\Sigma_l)$ (resp. $\zeta_{\shuffle}(\pi_X(\Sigma_l))\neq\zeta_{\shuffle}(S_l)$).}
\begin{eqnarray*}
\zeta_{\shuffle}(l_1\shuffle l_2)=\zeta_{\shuffle}(l_1)\zeta_{\shuffle}(l_2)
=\zeta_{\stuffle}(\pi_Y(l_1))\zeta_{\stuffle}(\pi_Y(l_2))=\zeta_{\stuffle}(\pi_Y(l_1)\stuffle\pi_Y(l_2)),\\
\zeta_{\stuffle}(l_1\stuffle l_2)=\zeta_{\stuffle}(l_1)\zeta_{\stuffle}(l_2)
=\zeta_{\shuffle}(\pi_X(l_1))\zeta_{\shuffle}(\pi_X(l_2))=
\zeta_{\shuffle}(\pi_X(l_1)\shuffle\pi_X(l_2)).
\end{eqnarray*}
\end{proposition}

\begin{proof}
As graphs of the morphisms $\zeta_{\shuffle}$ and $\zeta_{\stuffle}$, the series $Z_{\shuffle}$ and $Z_{\stuffle}$ are grouplike, for $\Delta_{\shuffle}$ and $\Delta_{\stuffle}$, respectively. Then, by the Friedrichs criterions for $\shuffle$ \cite{reutenauer} and $\stuffle$ \cite{VJM} and \eqref{regularization0}--\eqref{regularization2}, for any $u$ and $v\in x_0\ncp{\Q}{X}x_1$ (resp. $(Y\setminus\{y_1\})\ncp{\Q}{Y}$), one repectively has
\begin{eqnarray*}
\zeta_{\shuffle}(u\shuffle v)=\zeta_{\shuffle}(u)\zeta_{\shuffle}(v)
=\zeta_{\stuffle}(\pi_Y(u))\zeta_{\stuffle}(\pi_Y(v))=\zeta_{\stuffle}(\pi_Y(u)\stuffle\pi_Y(v)),\\
\zeta_{\stuffle}(u\stuffle v)=\zeta_{\stuffle}(u)\zeta_{\stuffle}(v)
=\zeta_{\shuffle}(\pi_X(u))\zeta_{\shuffle}(\pi_X(v))=
\zeta_{\shuffle}(\pi_X(u)\shuffle\pi_X(v)).
\end{eqnarray*}
In particular, using the $\Q$-algebraic basis, $\Lyn X\setminus\{x_0\}$ (resp. $\Lyn Y$) of $(\Q1_{X^*}\oplus\ncp{\Q}{X}x_1,\shuffle,1_{X^*})$ (resp. $(\ncp{\Q}{Y},\stuffle,1_{Y^*})$), these are nothing else the expected results.
\end{proof}

\begin{remark}\label{regularizations}
\begin{enumerate}
\item It was conjectured that $\{l_1\shuffle l_2-\pi_X(\pi_Y(l_1)\stuffle\pi_Y(l_2))\}_{l_1,l_2\in\Lyn X\setminus\{x_0\}}\subsetneq x_0\ncp{\Q}{X}x_1$ and $\{l_1\shuffle l_2-\pi_Y(\pi_X(l_1)\shuffle\pi_X(l_2))\}_{l_1,l_2\in\Lyn Y}\subsetneq(Y\setminus\{y_1\})\ncp{\Q}{Y}$, using \eqref{perrin}, $\Q$-algebraicly generate the kernels of the $\zeta$ polymorphism $\ker\zeta$ \cite{SLC43,FPSAC97}.

\item One can consider the \textit{double shuffle relations with simultaneous regularizations to the indeterminate $T$}, \textit{i.e.} $\zeta^T_{\shuffle}(x_1)=\zeta^T_{\stuffle}(y_1)=T$ (for the generators of weight $1$) and $\zeta^T_{\shuffle}(u\shuffle v)=\zeta^T_{\shuffle}(u)\zeta_{\shuffle}(v)$ (resp. $\zeta^T_{\stuffle}(u\stuffle v)=\zeta^T_{\stuffle}(u)\zeta^T_{\stuffle}(v)$), for $u$ and $v\in X^*$ (resp. $Y^*$) \cite{Cartier2,IharaKanekoZagier}. Since $T$ is transcendent then one can specialize $T=0$ or $T=\gamma$. So it was done in \cite{racinet,kaneko} with $T=0$. But the $\Q$-algebra $(\calZ,\times,1)$ is not a $\Q[T]$-algebra (see also Corollary \ref{notalgebraic}.)

\item In particular, for $T=0$, the \textit{regularized} morphisms $\zeta^0_{\shuffle}$ and $\zeta^0_{\stuffle}$ coincid, respectively,  with the morphisms $\zeta_{\shuffle}$ and $\zeta_{\stuffle}$ described in \eqref{regularization0}--\eqref{simul_regu}.

Moreover, \eqref{simul_regu} prove the \textit{existence} of a \textit{simultaneously regularizations} of the divergent polyzetas $\zeta(x_1)$ and $\zeta(y_1)$ to the same \textit{real} value, equal $0$, and then Proposition \ref{noyeaux} leads to  the \textit{regularized double shuffle relations} considered in \cite{racinet,kaneko}.
\end{enumerate}
\end{remark}

Now, using \eqref{gs}, the singularities analysis on $\{\Li_{w}\}_{w\in X^*}$ yield the following behaviors of the noncommutative generating series $\L$, put in the MRS form as in \eqref{Zshuffle} \cite{FPSAC98}
\begin{eqnarray}\label{behaviorsL} 
\L(z)\sim_0e^{x_0\log z}&\mbox{and}&\L(z)\sim_1e^{-x_1\log(1-z)}Z_{\shuffle},\end{eqnarray}
and then by a transfer theorem \cite{flajo_sedg} from $\{\Li_{w}\}_{w\in X^*}$ to $\{\H_{w}\}_{w\in Y^*}$, these lead also to the following behaviors of the noncommutative generating series $\H$ \cite{Daresbury,JSC}
\begin{eqnarray}\label{behaviorsH}
\H(n)\sim_{+\infty}e^{-\sum\limits_{k\ge1}\H_{y_k}(n){(-y_1)^k}/k}\pi_Y(Z_{\shuffle}).
\end{eqnarray}

By term by term differentiations (see \eqref{differentialforms} and \eqref{differentiation}), the noncommutative generating series $\L$ satisfies the following noncommutative differential equation \cite{acta,VJM,CM}
\begin{eqnarray}\label{NCDE}
d\L=({\omega_0}{x_0}+{\omega_1}{x_1})\L,
\end{eqnarray}
in which the Haussdorf group, \textit{i.e.}
$e^{\ncs{\calL ie_{\C}}{X}}=\{e^C\}_{C\in\ncs{\calL ie_{\C}}{X}}$,
plays the r\^ole of the differential Galois group of the grouplike solutions of \eqref{NCDE} and any grouplike series in the form $\L e^C$, where $C\in\ncs{\calL ie_{\C}}{X}$, satisfies \eqref{NCDE} \cite{acta,VJM,CM}.

A well known subgroup of this Haussdorf group is the monodromy group, around $0$ and $1$, of $\L$ \cite{FPSAC98}:
\begin{eqnarray}
\calM_0\L=\L e^{2\mathrm{i}\pi x_0}&\mbox{and}&
\calM_1\L=\L(Z_{\shuffle}e^{-2\mathrm{i}\pi x_1}Z_{\shuffle}^{-1}).
\end{eqnarray}
More generally, since $\Q\subset A\subset\C$ then one also has \cite{acta,CM}\begin{eqnarray}\label{dmA}
dm(A):=\{Z_{\shuffle}e^C\vert C\in\ncs{\calL ie_A}{X},
\scal{e^C}{x_0}=\scal{e^C}{x_1}=0\}\subsetneq e^{\ncs{\calL ie_{\C}}{X}}.
\end{eqnarray}
For any grouplike series, for $\Delta_{\shuffle}$, $\overline{Z}_{\shuffle}:=Z_{\shuffle}e^C\in dm(A)$ with $e^C\in\ncs{\calL ie_A}{X}$, satisfying $\scal{\overline{Z}_{\shuffle}}{x_0}=\scal{\overline{Z}_{\shuffle}}{x_1}=0$, let $\overline{\L}:=\L e^C$ and then
\begin{eqnarray}
\overline{\H}(n):=\sum_{w\in Y^*}\overline{\H}_{w}(n)w,&\mbox{where}&
\overline{\H}_{w}(n)=\scal{\frac{\scal{\overline{\L}(z)}{\pi_X(w)}}{1-z}}{z^n}.
\end{eqnarray}

\begin{remark}\label{associator}
\begin{enumerate}
\item $dm(A)$ contains $DM(A)$ introduced by Cartier in \cite{cartier2}.

\item In \cite{drinfeld2}, Drinfel'd stated that the noncommutative differential equation \eqref{NCDE} admits a unique solution, $G_0$ (resp. $G_1$), satisfying the following asymptotic condition
\begin{eqnarray*}
G_0(z)\sim_0e^{x_0\log(z)}=z^{x_0}&\mbox{(resp. }G_1(z)\sim_1e^{-x_1\log(1-z)}=(1-z)^{-x_1})
\end{eqnarray*}
and there is unique grouplike series $\Phi_{KZ}\in\ncs{\R}{X}$, called Drinfel'd series or Drinfel'd associator \cite{cartier2}, such that $G_0=G_1\Phi_{KZ}$ but neither have been constructed such expressions of $\Phi_{KZ}$ nor have been made explicit $G_0,G_1$.
We can recognize that $Z_{\shuffle}$, operated in \eqref{behaviorsL}, is nothing other than $\Phi_{KZ}$ (whereas $Z_{\gamma}$ and $Z_{\stuffle}$ are absent in \cite{drinfeld2}).
\end{enumerate}
\end{remark}

The behaviors in \eqref{behaviorsL}--\eqref{behaviorsH} yield the following renormalizations.

\begin{theorem}[Abel like theorem, \cite{Daresbury,JSC}]\label{renormalization1}
The following limits exist (see also \eqref{zetavalues})
\begin{eqnarray*}
\lim_{z\rightarrow 1}e^{y_1\log(1-z)}\pi_Y(\L(z))
=\lim_{n\rightarrow\infty}e^{\sum\limits_{k\ge1}\H_{y_k}(n){(-y_1)^k}/k}\H(n)
=\pi_Y(Z_{\shuffle}).
\end{eqnarray*}
\end{theorem}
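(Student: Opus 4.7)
The plan is to extract the two stated limits directly from the asymptotic behaviors \eqref{behaviorsL}--\eqref{behaviorsH} recorded above, interpreting each ``$\sim$'' as coefficient-wise convergence in the corresponding noncommutative series algebra.

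For the first equality, I would begin with the MRS factorization of $\L$ in \eqref{Zshuffle}. Since $\Lyn X$ is totally ordered with $x_1\succ l\succ x_0$ for every $l\in\Lyn X\setminus X$, the decreasing product cleanly splits off the two divergent boundary exponentials:
\begin{equation*}
\L(z)=e^{-x_1\log(1-z)}\,\tilde M(z)\,e^{x_0\log z},\qquad
\tilde M(z):=\prod_{l\in\Lyn X\setminus X}^{\searrow}e^{\Li_{S_l}(z)P_l}.
\end{equation*}
As $z\to 1$, each coefficient $\Li_{S_l}(z)$ with $l\in\Lyn X\setminus X$ tends to $\zeta(S_l)$ by definition of $Z_{\shuffle}$, while the right factor tends to $1_{X^*}$, so $e^{x_1\log(1-z)}\L(z)\to Z_{\shuffle}$ coefficient-wise. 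Applying the projection $\pi_Y$, extended to $\ncs{\C}{X}$ by sending words outside $X^*x_1$ to $0$, and using the elementary identity $\pi_Y(x_1^k w)=y_1^k\pi_Y(w)$ for $w\in X^*x_1$, one obtains
\begin{equation*}
\pi_Y\bigl(e^{x_1\log(1-z)}\L(z)\bigr)=e^{y_1\log(1-z)}\pi_Y(\L(z)),
\end{equation*}
and passing to the limit yields $\lim_{z\to 1}e^{y_1\log(1-z)}\pi_Y(\L(z))=\pi_Y(Z_{\shuffle})$.

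For the second equality, I would simply rearrange \eqref{behaviorsH}, namely $\H(n)\sim_{+\infty}e^{-\sum_{k\ge 1}\H_{y_k}(n)(-y_1)^k/k}\pi_Y(Z_{\shuffle})$, read as a coefficient-wise limit in $\ncs{\C}{Y}$. Left-multiplying by the inverse exponential, a well-defined grouplike series in $y_1$, immediately gives $\lim_{n\to\infty}e^{\sum_{k\ge 1}\H_{y_k}(n)(-y_1)^k/k}\H(n)=\pi_Y(Z_{\shuffle})$, which closes the chain of equalities.

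The main obstacle, in my view, is the rigorous conversion of the asymptotic equivalences \eqref{behaviorsL}--\eqref{behaviorsH} into coefficient-wise limits: one must control every individual coefficient $\Li_w(z)$ and $\H_w(n)$ uniformly inside the MRS factorization, which rests on the boundedness of $\Li_{S_l}(z)$ for $l\in\Lyn X\setminus X$ near $z=1$---precisely what qualifies $Z_{\shuffle}$ as a grouplike series for $\shuffle$---together with the Flajolet--Sedgewick transfer theorem applied term-by-term to the generating relation \eqref{gs}. A secondary subtlety is the interpretation of $\pi_Y(Z_{\shuffle})$: since $Z_{\shuffle}$ has support on all of $X^*$ (including words ending in $x_0$), extending $\pi_Y$ by $0$ on the complement of $X^*x_1$ is the natural convention, and it is harmless because the identity is read off only from coefficients indexed by $X^*x_1$.
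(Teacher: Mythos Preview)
The paper does not supply its own proof of Theorem~\ref{renormalization1}: it cites \cite{Daresbury,JSC} and merely records that ``the behaviors in \eqref{behaviorsL}--\eqref{behaviorsH} yield the following renormalizations.'' Your proposal is exactly the natural unpacking of that one-line indication: peel off the two extreme factors $e^{-x_1\log(1-z)}$ and $e^{x_0\log z}$ in the MRS factorization \eqref{Zshuffle}, let $z\to1$ coefficient-wise, push through $\pi_Y$, and for the discrete side invert the exponential prefactor in \eqref{behaviorsH}. This is correct and matches the intended route; the obstacles you flag (coefficient-wise control of the infinite product, the $0$-extension of $\pi_Y$ to words ending in $x_0$) are the genuine technical points, and your handling of the identity $\pi_Y(e^{cx_1}S)=e^{cy_1}\pi_Y(S)$ is the right mechanism.
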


\begin{corollary}[\cite{acta,VJM}]\label{pont}
Let $B(y_1):=e^{\gamma y_1-\sum\limits_{k\ge2}{\zeta(k)}{(-y_1)^k}/k}$
and\footnote{$B'(y_1)$ corresponds to the Ecalle's \texttt{Mono} mould \cite{ecalle}.} $B'(y_1):=e^{-\sum\limits_{k\ge2}{\zeta(k)}{(-y_1)^k}/k}$. Then
$Z_{\gamma}=B(y_1)\pi_Y({Z}_{\shuffle})$ and, by cancellation, it is equivalent to
$Z_{\stuffle}=B'(y_1)\pi_Y({Z}_{\shuffle})$.
\end{corollary}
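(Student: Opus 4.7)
The plan is to derive the first identity $Z_{\gamma}=B(y_1)\pi_Y(Z_{\shuffle})$ directly from the Abel-like theorem (Theorem~\ref{renormalization1}), and then deduce the second formula by cancellation using the already-stated relation $Z_{\gamma}=e^{\gamma y_1}Z_{\stuffle}$.

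First I would rewrite Theorem~\ref{renormalization1} in the equivalent asymptotic form
$$\H(n)\;=\;e^{-\sum_{k\ge1}\H_{y_k}(n)(-y_1)^k/k}\,\pi_Y(Z_{\shuffle})\;+\;o(1),\qquad n\to\infty,$$
understood coefficientwise on each word. Since every term in the exponent is a scalar multiple of a power of $y_1$, these summands commute pairwise, and the exponential factors as
$$e^{-\sum_{k\ge1}\H_{y_k}(n)(-y_1)^k/k}\;=\;e^{\H_{y_1}(n)\,y_1}\cdot e^{-\sum_{k\ge2}\H_{y_k}(n)(-y_1)^k/k}.$$
Using the standard expansions $\H_{y_1}(n)=\log n+\gamma+o(1)$ and $\H_{y_k}(n)=\zeta(k)+o(n^{1-k})$ for $k\ge 2$, and again the commutativity of these scalar powers of $y_1$, I would obtain
$$\H(n)\;=\;e^{y_1\log n}\,e^{\gamma y_1}\,B'(y_1)\,\pi_Y(Z_{\shuffle})\;+\;o(1)\;=\;e^{y_1\log n}\,B(y_1)\,\pi_Y(Z_{\shuffle})\;+\;o(1),$$
where I have used $B(y_1)=e^{\gamma y_1}B'(y_1)$.

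Next I would extract the finite part coefficientwise in the scale $\{n^a\log^b n\}_{a\in\Z,b\in\N}$. For any $n$-independent series $M$ and any word $w=y_1^a v$ with $v$ not beginning by $y_1$, the coefficient of $w$ in $e^{y_1\log n}M$ equals the polynomial $\sum_{k=0}^{a}(\log^k n/k!)\,\langle M\mid y_1^{a-k}v\rangle$, whose constant term is $\langle M\mid w\rangle$. Hence the finite part of $e^{y_1\log n}M$ in the chosen scale is exactly $M$ itself. Applied to $M=B(y_1)\pi_Y(Z_{\shuffle})$, together with the definition $Z_{\gamma}=\sum_{w\in Y^*}\gamma_w w$ with $\gamma_w=\mathrm{f.p.}_{n\to\infty}\H_w(n)$ in that scale (see \eqref{Zgamma}), this yields $Z_{\gamma}=B(y_1)\pi_Y(Z_{\shuffle})$.

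Finally, to get the equivalent cancellation form, I would substitute the above identity into the relation $Z_{\gamma}=e^{\gamma y_1}Z_{\stuffle}$ (stated just after \eqref{Zgamma}) and cancel the invertible grouplike factor $e^{\gamma y_1}$ using $B(y_1)=e^{\gamma y_1}B'(y_1)$, concluding $Z_{\stuffle}=B'(y_1)\pi_Y(Z_{\shuffle})$. The delicate step is the finite-part extraction: one must check that the coefficientwise $o(1)$ remainder inherited from Theorem~\ref{renormalization1} remains $o(1)$ after left multiplication by the nontrivial polynomial-in-$\log n$ factor $e^{y_1\log n}$. This is harmless because for a fixed word only finitely many coefficients of $e^{y_1\log n}$ enter the computation, so the standard asymptotic book-keeping in the scale $\{n^a\log^b n\}$ applies termwise.
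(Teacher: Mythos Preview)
The paper does not include an explicit proof of this corollary; it is stated with external citations and positioned immediately after Theorem~\ref{renormalization1} as its direct consequence. Your derivation from Theorem~\ref{renormalization1}---factor the exponential, insert the asymptotics $\H_{y_1}(n)=\log n+\gamma+o(1)$ and $\H_{y_k}(n)=\zeta(k)+o(1)$ for $k\ge2$, take coefficientwise finite parts in the scale $\{n^a\log^b n\}$, then cancel $e^{\gamma y_1}$ using $Z_{\gamma}=e^{\gamma y_1}Z_{\stuffle}$---is exactly the intended argument and is correct.

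One small remark on your closing caveat: the passage from $\lim_{n\to\infty}e^{\sum\ldots}\H(n)=\pi_Y(Z_{\shuffle})$ to $\H(n)=e^{-\sum\ldots}\pi_Y(Z_{\shuffle})+o(1)$ indeed requires that the inverse factor, whose coefficients grow polynomially in $\log n$, does not spoil the remainder. This is safe not merely because only finitely many coefficients are involved, but because in this setting the $o(1)$ remainders in the harmonic-sum expansions are genuinely of order $O(n^{-c})$ for some $c>0$ (this is the content of the transfer theorem cited at \eqref{behaviorsH}), so any fixed power of $\log n$ times such a remainder still tends to~$0$ and contributes nothing to the finite part. With that clarification your argument is complete.
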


\begin{example}[\cite{JSC}]
Extracting coefficients on $\L(z)\sim_1e^{-x_1\log(1-z)}Z_{\shuffle}$ then transferring,
\begin{eqnarray*}
\Li_{2,1}(z)&=&\zeta(3)+(1-z)\log(1-z)-(1-z)^{-1}-\frac12(1-z)\log^2(1-z)\\
		&+&(1-z)^2\biggl(-\frac14\log^2(1-z)+\frac14\log(1-z)\biggr)+\cdots,\cr
\H_{2,1}(n)&=&\zeta(3)-\frac1n(\log(n)+1+\gamma)+\frac1{2n}\log(n)+\cdots,\cr
\Li_{1,2}(z)&=&2-2\zeta(3)-\zeta(2)\log(1-z)-2(1-z)\log(1-z)\cr
		&+&(1-z)\log^2(1-z)+(1-z)^2\biggl(\frac12\log^2(1-z)-\frac12\log(1-z)\biggr)+\cdots,\cr
\H_{1,2}(n)&=&\zeta(2)\gamma-2\zeta(3)+\zeta(2)\log(n)+\frac1{2n}(\zeta(2)+2)+\cdots.
\end{eqnarray*}

Hence, in each convergent and divergent cases,
\begin{eqnarray}
&\mathop{\mathrm{f.p.}}\limits_{n\rightarrow+\infty}\H_{2,1}(n)
=\mathop{\mathrm{f.p.}}\limits_{z\rightarrow1}\Li_{2,1}(z)=\zeta(2,1)=\zeta(3),\\
&\mathop{\mathrm{f.p.}}\limits_{n\rightarrow+\infty}\H_{1,2}(n)=\zeta(2)\gamma-2\zeta(3)\neq
\mathop{\mathrm{f.p.}}\limits_{z\rightarrow1}\Li_{1,2}(z)=2-2\zeta(3),
\end{eqnarray}
with $\zeta(2)\gamma=0.94948171111498152454556410223170493364000594947366\cdots$.
\end{example}

Identifying $\{\scal{Z_{\gamma}}{y_1^kw}\}_{k\in\N_{\ge1}\atop w\in\CONV}$ in $Z_{\gamma}=B(y_1)\pi_Y({Z}_{\shuffle})$
(see Corollary \ref{pont}) and using the Bell polynomials $\{b_{n,k}(t_1,\ldots,t_k)\}_{n,k\in\N_+}$, the constants associated to
$\{\H_{y_1^kw}\}_{k\in\N_{\ge1}\atop w\in\CONV\cup\{1_{Y^*}\}}$ (depending on $\gamma$) are expressed as follows \cite{Daresbury,JSC}
\begin{eqnarray}
\gamma_{y_1^kw}&=&\sum_{i=0}^k\frac{\zeta(x_0(-x_1)^{k-i}\shuffle\pi_Xw])}{i!}\Big(\sum_{j=1}^ib_{i,j}(\gamma,-\zeta(2),2\zeta(3),\ldots)\Big),\label{y_1^kw}\\
\gamma_{y_1^k}&=&\sum_{{s_1,\ldots,s_k>0}\atop{s_1+\ldots+ks_k=k}}\frac{(-1)^k}{s_1!\ldots s_k!}
(-\gamma)^{s_1}\Big(-\frac{\zeta(2)}{2}\Big)^{s_2}\ldots\Big(-\frac{\zeta(k)}{k}\Big)^{s_k}.\label{y_1^k}
\end{eqnarray}

\begin{example}[Generalized Euler's gamma constant, \cite{Daresbury,JSC}]\label{generalizedgamma}
\begin{eqnarray*}
\gamma_{1,1}&=&\frac12(\gamma^2-\zeta(2)),\cr
\gamma_{1,1,1}&=&\frac16(\gamma^3-3\zeta(2)\gamma+2\zeta(3)),\cr
\gamma_{1,7}&=&\zeta(7)\gamma+\zeta(3)\zeta(5)-\frac{54}{175}\zeta(2)^4,\cr
\gamma_{1,1,6}&=&\frac{4}{35}\zeta(2)^3\gamma^2+(\zeta(2)\zeta(5)
+\frac25\zeta(3)\zeta(2)^2-4\zeta(7))\gamma+\frac{19}{35}\zeta(2)^4+\frac12\zeta(2)\zeta(3)^2\cr
&+&\zeta(6,2)-4\zeta(3)\zeta(5).
\end{eqnarray*}
\end{example}

But identifying the local coordinates, in $Z_{\gamma}=B(y_1)\pi_Y({Z}_{\shuffle})$ and $Z_{\shuffle}=B(x_1)^{-1}\pi_XZ_{\gamma}$, one obtains the $\Q$-algebraic relations, homogeneous in weight (see Remark \ref{length}) and independent on $\gamma$ (see also Corollary \ref{notalgebraic} below), among $\{\zeta(S_l)\}_{l\in\Lyn X\setminus X}$ and among $\{\zeta(\Sigma_l)\}_{l\in\Lyn Y\setminus\{y_1\}}$ \cite{Bui} (see also Remarks \ref{weightX} and \ref{weightY}).

\begin{example}[Polynomial relations on local coordinates, \cite{Bui}]\label{E1}\\
\small
\center
$\begin{array}{|c|rcl|rcl|}
		\hline
		&\mbox{Relations}\!\!&\text{on}&\!\!\{\zeta(\Sigma_l)\}_{l\in\Lyn Y\setminus\{y_1\}}&\mbox{Relations}\!\!&\text{on}&\!\!\{\zeta(S_l)\}_{l\in\Lyn X\setminus X}\\
		\hline
		3&\zeta(\Sigma_{y_2y_1})&=&\frac{3}{2}\zeta(\Sigma_{y_3})&\zeta(S_{x_0x_1^2})&=&\zeta(S_{x_0^2x_1})\\ 
		\hline
		&\zeta(\Sigma_{y_4})&=&\frac{2}{5}\zeta(\Sigma_{y_2})^2&\zeta(S_{x_0^3x_1})&=&\frac{2}{5}\zeta(S_{x_0x_1})^2\\               
		4&\zeta(\Sigma_{y_3y_1})&=&\frac{3}{10}\zeta(\Sigma_{y_2})^2&\zeta(S_{x_0^2x_1^2})&=&\frac{1}{10}\zeta(S_{x_0x_1})^2\\  
		&\zeta(\Sigma_{y_2y_1^2})&=&\frac{2}{3}\zeta(\Sigma_{y_2})^2&\zeta(S_{x_0x_1^3})&=&\frac{2}{5}\zeta(S_{x_0x_1})^2\\ 
		\hline
		&\zeta(\Sigma_{y_3y_2})&=&3\zeta(\Sigma_{y_3})\zeta(\Sigma_{y_2})-5\zeta(\Sigma_{y_5})&\zeta(S_{x_0^3x_1^2})&=&-\zeta(S_{x_0^2x_1})\zeta(S_{x_0x_1})+2\zeta(S_{x_0^4x_1})\\ 
		&\zeta(\Sigma_{y_4y_1})&=&-\zeta(\Sigma_{y_3})\zeta(\Sigma_{y_2})+\frac{5}{2}\zeta(\Sigma_{y_5})&\zeta(S_{x_0^2x_1x_0x_1})&=&-\frac{3}{2}\zeta(S_{x_0^4x_1})+\zeta(S_{x_0^2x_1})\zeta(S_{x_0x_1})\\   
		5&\zeta(\Sigma_{y_2^2y_1})&=&\frac{3}{2}\zeta(\Sigma_{y_3})\zeta(\Sigma_{y_2})-\frac{25}{12}\zeta(\Sigma_{y_5})&\zeta(S_{x_0^2x_1^3})&=&-\zeta(S_{x_0^2x_1})\zeta(S_{x_0x_1})+2\zeta(S_{x_0^4x_1})\\ 
		&\zeta(\Sigma_{y_3y_1^2})&=&\frac{5}{12}\zeta(\Sigma_{y_5})&\zeta(S_{x_0x_1x_0x_1^2})&=&\frac{1}{2}\zeta(S_{x_0^4x_1})\\ 
		&\zeta(\Sigma_{y_2y_1^3})&=&\frac{1}{4}\zeta(\Sigma_{y_3})\zeta(\Sigma_{y_2})+\frac{5}{4}\zeta(\Sigma_{y_5})&\zeta(S_{x_0x_1^4})&=&\zeta(S_{x_0^4x_1})\\ 
		\hline
		&\zeta(\Sigma_{y_6})&=&\frac{8}{35}\zeta(\Sigma_{y_2})^3& \zeta(S_{x_0^5x_1})&=&\frac{8}{35}\zeta(S_{x_0x_1})^3\\ 
		&\zeta(\Sigma_{y_4y_2})&=&\zeta(\Sigma_{y_3})^2-\frac{4}{21}\zeta(\Sigma_{y_2})^3& \zeta(S_{x_0^4x_1^2})&=&\frac{6}{35}\zeta(S_{x_0x_1})^3-\frac{1}{2}\zeta(S_{x_0^2x_1})^2\\ 
		&\zeta(\Sigma_{y_5y_1})&=&\frac{2}{7}\zeta(\Sigma_{y_2})^3-\frac{1}{2}\zeta(\Sigma_{y_3})^2& \zeta(S_{x_0^3x_1x_0x_1})&=&\frac{4}{105}\zeta(S_{x_0x_1})^3 \\ 
		&\zeta(\Sigma_{y_3y_1y_2})&=&-\frac{17}{30}\zeta(\Sigma_{y_2})^3+\frac{9}{4}\zeta(\Sigma_{y_3})^2&\zeta(S_{x_0^3x_1^3})&=&\frac{23}{70}\zeta(S_{x_0x_1})^3-\zeta(S_{x_0^2x_1})^2\\ 
		6&\zeta(\Sigma_{y_3y_2y_1})&=&3\zeta(\Sigma_{y_3})^2-\frac{9}{10}\zeta(\Sigma_{y_2})^3& \zeta(S_{x_0^2x_1x_0x_1^2})&=&\frac{2}{105}\zeta(S_{x_0x_1})^3\\ 
		&\zeta(\Sigma_{y_4y_1^2})&=&\frac{3}{10}\zeta(\Sigma_{y_2})^3-\frac{3}{4}\zeta(\Sigma_{y_3})^2&\zeta(S_{x_0^2x_1^2x_0x_1})&=&-\frac{89}{210}\zeta(S_{x_0x_1})^3+\frac{3}{2}\zeta(S_{x_0^2x_1})^2 \\ 
		&\zeta(\Sigma_{y_2^2y_1^2})&=&\frac{11}{63}\zeta(\Sigma_{y_2})^3-\frac{1}{4}\zeta(\Sigma_{y_3})^2& \zeta(S_{x_0^2x_1^4})&=&\frac{6}{35}\zeta(S_{x_0x_1})^3-\frac{1}{2}\zeta(S_{x_0^2x_1})^2\\ 
		&\zeta(\Sigma_{y_3y_1^3})&=&\frac{1}{21}\zeta(\Sigma_{y_2})^3&\zeta(S_{x_0x_1x_0x_1^3})&=&\frac{8}{21}\zeta(S_{x_0x_1})^3-\zeta(S_{x_0^2x_1})^2 \\ 
		&\zeta(\Sigma_{y_2y_1^4})&=&\frac{17}{50}\zeta(\Sigma_{y_2})^3+\frac{3}{16}\zeta(\Sigma_{y_3})^2&\zeta(S_{x_0x_1^5})&=&\frac{8}{35}\zeta(S_{x_0x_1})^3\\ 
	\hline
	\end{array}$
\end{example}

As solution of \eqref{NCDE} and, as in \eqref{behaviorsL}--\eqref{behaviorsH}, $\overline{\L}$ has the following behaviors \cite{acta,VJM,CM}
\begin{eqnarray}
\overline{\L}(z)\sim_0e^{x_0\log z}e^C&\mbox{and}&
\overline{\L}(z)\sim_1e^{-x_1\log(1-z)}\overline{Z}_{\shuffle},\label{behaviorsLE}\\
&&\overline{\H}(n)\sim_{+\infty}e^{-\sum\limits_{k\ge1}\H_{y_k}(n){(-y_1)^k}/k}\pi_Y(\overline Z_{\shuffle}).\label{behaviorsLEcons}
\end{eqnarray}
These yield similar results as in Theorem \ref{renormalization1} and in Corollary \ref{pont} \cite{acta,VJM,CM}:
\begin{eqnarray}
\lim_{z\rightarrow 1}e^{y_1\log(1-z)}\pi_Y(\overline{\L}(z))
=&\pi_Y(\overline{Z}_{\shuffle})&=\lim_{n\rightarrow\infty}
e^{\sum\limits_{k\ge1}\H_{y_k}(n){(-y_1)^k}/k}\overline{\H}(n),\\
\overline{Z}_{\gamma}=B(y_1)\pi_Y(\overline{Z}_{\shuffle})&\iff&
\overline{Z}_{\stuffle}=B'(y_1)\pi_Y(\overline{Z}_{\shuffle}).
\end{eqnarray}
Hence, $\L$ is the unique solution of \eqref{NCDE} satisfying the asymptotic conditions in \eqref{behaviorsL}, \textit{i.e.} with $e^C=1_{X^*}$, and then $Z_{\shuffle}$ is unique \cite{CM}. Identifying the local coordinates, in
\begin{eqnarray}
\overline{Z}_{\gamma}=B(y_1)\pi_Y({\overline{Z}}_{\shuffle})&\mbox{and}&
\overline{Z}_{\shuffle}=B(x_1)^{-1}\pi_X\overline{Z}_{\gamma},
\end{eqnarray}
one also obtains algebraic relations among $\{\zeta(w)\}_{w\in\CONV}$, over $A$. Moreover,

\begin{corollary}[\cite{acta,VJM,CM}]\label{notalgebraic}
For any commutative $\Q$-algebra $A$, if there exists $A$-algebraic relations among $\{\zeta(w)\}_{w\in\CONV}$ and $\gamma$ then $\gamma\in A$.
\end{corollary}

\section{From algebraic relations among polyzetas to rewriting systems}\label{Algorithm}
The previous relations among polyzetas obtained in Section \ref{Indexation} provide the polynomials $\{Q_l\}_{l\in\Lyn\calX\setminus\gDIV}$ generating inside $\ker\zeta$ (see \eqref{Q_XQ_Y}--\eqref{R_XR_Y} below). Replacing ``$=$'' by ``$\rightarrow$'', they yield the rewriting rules (see \eqref{R_XR_Y}--\eqref{Rirr} below) endowed with the increasing sets of irreducible polyzetas, $\{\calZ_{irr}^{\calX,\le p}\}_{p\ge2}$, as well as their images, $\{\calL_{irr}^{\calX,\le p}\}_{p\ge2}$, by a section of $\zeta$ such that the following restriction of $\zeta$ polymorphism is \textit{bijective} \cite{VJM,CM}
\begin{eqnarray}
&\zeta:\left\{(\Q[\calL^{X,\infty}_{irr}],\shuffle,1_{X^*})\atop(\Q[\calL^{Y,\infty}_{irr}],\stuffle,1_{Y^*})\right\}\hooklongtwoheadrightarrow
(\Q[\calZ^{\calX,\infty}_{irr}],\times,1)\cong(\calZ,\times,1),&\label{restriction}\\
&\calZ_{irr}^{\calX,\le 2}\subset\cdots\subset\calZ_{irr}^{\calX,\le p}\subset\cdots\subset\calZ_{irr}^{\calX,\infty}:=\mathop{\cup}\limits_{p\ge2}\calZ_{irr}^{\calX,\le p},&\label{Zirr}\\
&\calL_{irr}^{\calX,\le 2}\subset\cdots\subset\calL_{irr}^{\calX,\le p}\subset\cdots\subset\calL_{irr}^{\calX,\infty}:=\mathop{\cup}\limits_{p\ge2}\calL_{irr}^{\calX,\le p}.&\label{Lirr}
\end{eqnarray}

\begin{remark}\label{bijection}
By the restriction in \eqref{restriction}--\eqref{Lirr}, one has (see Examples \ref{E}--\ref{E0} below)
\begin{eqnarray*}
S_l\in\calL^{X,\infty}_{irr}\subsetneq\{S_l\}_{l\Lyn X\setminus X}&\iff&
\zeta(S_l)\in\calZ^{X,\infty}_{irr}\subsetneq\{\zeta(S_l)\}_{S_l\in\Lyn X\setminus X},\\
\Sigma_l\in\calL^{Y,\infty}_{irr}\subsetneq\{\Sigma_l\}_{l\Lyn Y\setminus\{y_1\}}&\iff&
\zeta(\Sigma_l)\in\calZ^{X,\infty}_{irr}\subsetneq\{\zeta(\Sigma_l)\}_{l\in\Lyn Y\setminus\{y_1\}},
\end{eqnarray*}
meaning that the elements of $\calZ_{irr}^{X,\infty}$ (resp. $\calZ_{irr}^{Y,\infty}$) are $\Q$-algebraic generators of $\calZ$ (see also Proposition \ref{directsum} below). 
\end{remark}

\begin{example}[Homogeneous in weight polynomials\footnote{This table is ``handly'' obtained from the one of Example \ref{E1} in \cite{Bui}.} generating inside $\ker\zeta$]\label{E3}
\small
\center
$\begin{array}{|c|r|r|}
	\hline
		&\{Q_l\}_{l\in\Lyn Y\setminus\{y_1\}}&\{Q_l\}_{l\in\Lyn X\setminus X}\\
		\hline
		3&\zeta({\Sigma_{y_2y_1}-\frac{3}{2}\Sigma_{y_3}})=0&\zeta({S_{x_0x_1^2}-S_{x_0^2x_1}})=0\\ 
		\hline
		&\zeta({\Sigma_{y_4}-\frac{2}{5}\Sigma_{y_2}^{\stuffle2}})=0&\zeta({S_{x_0^3x_1}-\frac{2}{5}S_{x_0x_1}^{\shuffle2}})=0\\          
		4&\zeta({\Sigma_{y_3y_1}-\frac{3}{10}\Sigma_{y_2}^{\stuffle2}})=0&\zeta({S_{x_0^2x_1^2}-\frac{1}{10}S_{x_0x_1}^{\shuffle2}})=0\\ 
		&\zeta({\Sigma_{y_2y_1^2}-\frac{2}{3}\Sigma_{y_2}^{\stuffle2}})=0&\zeta({S_{x_0x_1^3}-\frac{2}{5}S_{x_0x_1}^{\shuffle2}})=0\\ 
		\hline
		&\zeta({\Sigma_{y_3y_2}-3\Sigma_{y_3}\stuffle\Sigma_{y_2}-5\Sigma_{y_5}})=0&\zeta({S_{x_0^3x_1^2}-S_{x_0^2x_1}\shuffle S_{x_0x_1}+2S_{x_0^4x_1}})=0\\ 
		&\zeta({\Sigma_{y_4y_1}-\Sigma_{y_3}\stuffle\Sigma_{y_2})+\frac{5}{2}\Sigma_{y_5}})=0&\zeta({S_{x_0^2x_1x_0x_1}-\frac{3}{2}S_{x_0^4x_1}+S_{x_0^2x_1}\shuffle S_{x_0x_1}})=0\\ 
		5&\zeta({\Sigma_{y_2^2y_1}-\frac{3}{2}\Sigma_{y_3}\stuffle\Sigma_{y_2}-\frac{25}{12}\Sigma_{y_5}})=0&\zeta({S_{x_0^2x_1^3}-S_{x_0^2x_1}\shuffle S_{x_0x_1}+2S_{x_0^4x_1}})=0\\ 
		&\zeta({\Sigma_{y_3y_1^2}-\frac{5}{12}\Sigma_{y_5}})=0&\zeta({S_{x_0x_1x_0x_1^2}-\frac{1}{2}S_{x_0^4x_1}})=0\\ 
		&\zeta({\Sigma_{y_2y_1^3}-\frac{1}{4}\Sigma_{y_3}\stuffle\Sigma_{y_2})+\frac{5}{4}\Sigma_{y_5}})=0&\zeta({S_{x_0x_1^4}-S_{x_0^4x_1}})=0\\ 
		\hline
		&\zeta({\Sigma_{y_6}-\frac{8}{35}\Sigma_{y_2}^{\stuffle3}})=0&\zeta({S_{x_0^5x_1}-\frac{8}{35}S_{x_0x_1}^{\shuffle3}})=0\\ 
		&\zeta({\Sigma_{y_4y_2}-\Sigma_{y_3}^{\stuffle2}-\frac{4}{21}\Sigma_{y_2}^{\stuffle3}})=0&\zeta({S_{x_0^4x_1^2}-\frac{6}{35}S_{x_0x_1}^{\shuffle3}-\frac{1}{2}S_{x_0^2x_1}^{\shuffle2}})=0\\ 
		&\zeta({\Sigma_{y_5y_1}-\frac{2}{7}\Sigma_{y_2}^{\stuffle3}-\frac{1}{2}\Sigma_{y_3}^{\stuffle2}})=0&\zeta({S_{x_0^3x_1x_0x_1}-\frac{4}{105}S_{x_0x_1}^{\shuffle3}})=0\\ 
		&\zeta({\Sigma_{y_3y_1y_2}-\frac{17}{30}\Sigma_{y_2}^{\stuffle3}+\frac{9}{4}\Sigma_{y_3}^{\stuffle2}})=0&\zeta({S_{x_0^3x_1^3}-\frac{23}{70}S_{x_0x_1}^{\shuffle3}-S_{x_0^2x_1}^{\shuffle2}})=0\\ 
		6&\zeta({\Sigma_{y_3y_2y_1}-3\Sigma_{y_3}^{\stuffle2}-\frac{9}{10}\Sigma_{y_2}^{\stuffle3}})=0&\zeta({S_{x_0^2x_1x_0x_1^2}-\frac{2}{105}S_{x_0x_1}^{\shuffle3}})=0\\ 
		&\zeta({\Sigma_{y_4y_1^2}-\frac{3}{10}\Sigma_{y_2}^{\stuffle2}-\frac{3}{4}\Sigma_{y_3}^{\stuffle2}})=0&\zeta({S_{x_0^2x_1^2x_0x_1}-\frac{89}{210}S_{x_0x_1}^{\shuffle3}+\frac{3}{2}S_{x_0^2x_1}^{\shuffle2}})=0\\ 
		&\zeta({\Sigma_{y_2^2y_1^2}-\frac{11}{63}\Sigma_{y_2}^{\stuffle2}-\frac{1}{4}\Sigma_{y_3}^{\stuffle2}})=0&\zeta({S_{x_0^2x_1^4}-\frac{6}{35}S_{x_0x_1}^{\shuffle3}-\frac{1}{2}S_{x_0^2x_1}^{\shuffle2}})=0\\ 
		&\zeta({\Sigma_{y_3y_1^3}-\frac{1}{21}\Sigma_{y_2}^{\stuffle3}})=0&\zeta({S_{x_0x_1x_0x_1^3}-\frac{8}{21}S_{x_0x_1}^{\shuffle3}-S_{x_0^2x_1}^{\shuffle2}})=0\\ 
		&\zeta({\Sigma_{y_2y_1^4}-\frac{17}{50}\Sigma_{y_2}^{\stuffle3}+\frac{3}{16}\Sigma_{y_3}^{\stuffle2}})=0&\zeta({S_{x_0x_1^5}-\frac{8}{35}S_{x_0x_1}^{\shuffle3}})=0\\ 
	\hline
	\end{array}$
\end{example}

\begin{example}[Rewitting among local coordinates\footnote{This table is ``handly'' obtained from the one of Example \ref{E1} in \cite{Bui}.} and irreducible terms, \cite{Bui}]\label{E}\\
\small
\center
$\begin{array}{|c|rcl|rcl|}
		\hline
		&\mbox{Rewriting}\!\!&\text{on}&\!\!\{\zeta(\Sigma_l)\}_{l\in\Lyn Y\setminus\{y_1\}}&\mbox{Rewriting}\!\!&\text{on}&\!\!\{\zeta(S_l)\}_{l\in\Lyn X\setminus X}\\
		\hline
		3&\zeta(\Sigma_{y_2y_1})&\to&\frac{3}{2}\zeta(\Sigma_{y_3})&\zeta(S_{x_0x_1^2})&\to&\zeta(S_{x_0^2x_1})\\ 
		\hline
		&\zeta(\Sigma_{y_4})&\to&\frac{2}{5}\zeta(\Sigma_{y_2})^2&\zeta(S_{x_0^3x_1})&\to&\frac{2}{5}\zeta(S_{x_0x_1})^2\\               
		4&\zeta(\Sigma_{y_3y_1})&\to&\frac{3}{10}\zeta(\Sigma_{y_2})^2&\zeta(S_{x_0^2x_1^2})&\to&\frac{1}{10}\zeta(S_{x_0x_1})^2\\  
		&\zeta(\Sigma_{y_2y_1^2})&\to&\frac{2}{3}\zeta(\Sigma_{y_2})^2&\zeta(S_{x_0x_1^3})&\to&\frac{2}{5}\zeta(S_{x_0x_1})^2\\ 
		\hline
		&\zeta(\Sigma_{y_3y_2})&\to&3\zeta(\Sigma_{y_3})\zeta(\Sigma_{y_2})-5\zeta(\Sigma_{y_5})&\zeta(S_{x_0^3x_1^2})&\to&-\zeta(S_{x_0^2x_1})\zeta(S_{x_0x_1})+2\zeta(S_{x_0^4x_1})\\ 
		&\zeta(\Sigma_{y_4y_1})&\to&-\zeta(\Sigma_{y_3})\zeta(\Sigma_{y_2})+\frac{5}{2}\zeta(\Sigma_{y_5})&\zeta(S_{x_0^2x_1x_0x_1})&\to&-\frac{3}{2}\zeta(S_{x_0^4x_1})+\zeta(S_{x_0^2x_1})\zeta(S_{x_0x_1})\\   
		5&\zeta(\Sigma_{y_2^2y_1})&\to&\frac{3}{2}\zeta(\Sigma_{y_3})\zeta(\Sigma_{y_2})-\frac{25}{12}\zeta(\Sigma_{y_5})&\zeta(S_{x_0^2x_1^3})&\to&-\zeta(S_{x_0^2x_1})\zeta(S_{x_0x_1})+2\zeta(S_{x_0^4x_1})\\ 
		&\zeta(\Sigma_{y_3y_1^2})&\to&\frac{5}{12}\zeta(\Sigma_{y_5})&\zeta(S_{x_0x_1x_0x_1^2})&\to&\frac{1}{2}\zeta(S_{x_0^4x_1})\\ 
		&\zeta(\Sigma_{y_2y_1^3})&\to&\frac{1}{4}\zeta(\Sigma_{y_3})\zeta(\Sigma_{y_2})+\frac{5}{4}\zeta(\Sigma_{y_5})&\zeta(S_{x_0x_1^4})&\to&\zeta(S_{x_0^4x_1})\\ 
		\hline
		&\zeta(\Sigma_{y_6})&\to&\frac{8}{35}\zeta(\Sigma_{y_2})^3& \zeta(S_{x_0^5x_1})&\to&\frac{8}{35}\zeta(S_{x_0x_1})^3\\ 
		&\zeta(\Sigma_{y_4y_2})&\to&\zeta(\Sigma_{y_3})^2-\frac{4}{21}\zeta(\Sigma_{y_2})^3& \zeta(S_{x_0^4x_1^2})&\to&\frac{6}{35}\zeta(S_{x_0x_1})^3-\frac{1}{2}\zeta(S_{x_0^2x_1})^2\\ 
		&\zeta(\Sigma_{y_5y_1})&\to&\frac{2}{7}\zeta(\Sigma_{y_2})^3-\frac{1}{2}\zeta(\Sigma_{y_3})^2& \zeta(S_{x_0^3x_1x_0x_1})&\to&\frac{4}{105}\zeta(S_{x_0x_1})^3 \\ 
		&\zeta(\Sigma_{y_3y_1y_2})&\to&-\frac{17}{30}\zeta(\Sigma_{y_2})^3+\frac{9}{4}\zeta(\Sigma_{y_3})^2&\zeta(S_{x_0^3x_1^3})&\to&\frac{23}{70}\zeta(S_{x_0x_1})^3-\zeta(S_{x_0^2x_1})^2\\ 
		6&\zeta(\Sigma_{y_3y_2y_1})&\to&3\zeta(\Sigma_{y_3})^2-\frac{9}{10}\zeta(\Sigma_{y_2})^3& \zeta(S_{x_0^2x_1x_0x_1^2})&\to&\frac{2}{105}\zeta(S_{x_0x_1})^3\\ 
		&\zeta(\Sigma_{y_4y_1^2})&\to&\frac{3}{10}\zeta(\Sigma_{y_2})^3-\frac{3}{4}\zeta(\Sigma_{y_3})^2&\zeta(S_{x_0^2x_1^2x_0x_1})&\to&-\frac{89}{210}\zeta(S_{x_0x_1})^3+\frac{3}{2}\zeta(S_{x_0^2x_1})^2 \\ 
		&\zeta(\Sigma_{y_2^2y_1^2})&\to&\frac{11}{63}\zeta(\Sigma_{y_2})^3-\frac{1}{4}\zeta(\Sigma_{y_3})^2& \zeta(S_{x_0^2x_1^4})&\to&\frac{6}{35}\zeta(S_{x_0x_1})^3-\frac{1}{2}\zeta(S_{x_0^2x_1})^2\\ 
		&\zeta(\Sigma_{y_3y_1^3})&\to&\frac{1}{21}\zeta(\Sigma_{y_2})^3&\zeta(S_{x_0x_1x_0x_1^3})&\to&\frac{8}{21}\zeta(S_{x_0x_1})^3-\zeta(S_{x_0^2x_1})^2 \\ 
		&\zeta(\Sigma_{y_2y_1^4})&\to&\frac{17}{50}\zeta(\Sigma_{y_2})^3+\frac{3}{16}\zeta(\Sigma_{y_3})^2&\zeta(S_{x_0x_1^5})&\to&\frac{8}{35}\zeta(S_{x_0x_1})^3\\ 
	\hline
	\end{array}$
\begin{eqnarray*}
\calZ_{irr}^{X,\le12}&=&
\{\zeta(S_{x_0x_1}),\zeta(S_{x_0^2x_1}),\zeta(S_{x_0^4x_1}),\zeta(S_{x_0^6x_1}),\zeta(S_{x_0x_1^2x_0x_1^4}),\zeta(S_{x_0^8x_1}),\cr
&&\zeta(S_{x_0x_1^2x_0x_1^6}),\zeta(S_{x_0^{10}x_1}),\zeta(S_{x_0x_1^2x_0x_1^7}),\zeta(S_{x_0x_1^2x_0x_1^8}),\zeta(S_{x_0x_1^4x_0x_1^6})\}.\cr
\calZ_{irr}^{Y,\le12}&=&
\{\zeta(\Sigma_{y_2}),\zeta(\Sigma_{y_3}),\zeta(\Sigma_{y_5}),\zeta(\Sigma_{y_7}),\zeta(\Sigma_{y_3y_1^5}),\zeta(\Sigma_{y_9}),\zeta(\Sigma_{y_3y_1^7}),\cr
&&\zeta(\Sigma_{y_{11}}),\zeta(\Sigma_{y_2y_1^9}),\zeta(\Sigma_{y_3y_1^9}),\zeta(\Sigma_{y_2^ 2y_1^8})\}.
\end{eqnarray*}
\end{example}

Now, let us describ the algorithm {\bf LocalCoordinateIdentification} (partially implemented in \cite{Bui} and briefly described in \cite{LT12,Tokyo}), applying Theorem \ref{renormalization1} and Corollary \ref{pont}, bringing aditional informations and results to \cite{Bui}. It uses the notations concerning
\begin{enumerate}
\item the shuffle and quasi-shuffle subalgebras, recalled in Section \ref{Combinatorics}, \textit{i.e.}
\begin{eqnarray}
(\Q1_{X^*}\oplus x_0\ncp{\Q}{X}x_1,\shuffle,1_{X^*})&\cong&(\Q[\{S_l\}_{l\in\Lyn X\setminus X}],\shuffle,1_{X^*}),\label{subalgebra1}\\
(\Q1_{Y^*}\oplus(Y\setminus\{y_1\})\ncp{\Q}{Y},\stuffle,1_{Y^*})&\cong&(\Q[\{\Sigma_l\}_{l\in\Lyn Y\setminus\{y_1\}}],\stuffle,1_{Y^*}),\label{subalgebra2}
\end{eqnarray}
\item the algebras of polyzetas, recalled in Section \ref{Indexation}, \textit{i.e.}
\begin{eqnarray}
(\Q[\{\zeta(S_l)\}_{l\in\Lyn X\setminus X}],\times,1)&\mbox{and}&
(\Q[\{\zeta(\Sigma_l)\}_{l\in\Lyn Y\setminus\{y_1\}}],\times,1),
\end{eqnarray}
\item the rewriting systems, introduced in \eqref{Q_XQ_Y}--\eqref{Rirr} bellow, \textit{i.e.}
\begin{eqnarray}\label{RS}
(\Q1_{X^*}\oplus x_0\ncp{\Q}{X}x_1,\calR^X_{irr})&\mbox{and}&
(\Q1_{Y^*}\oplus(Y\setminus\{y_1\})\ncp{\Q}{Y},\calR^Y_{irr}),
\end{eqnarray}
which are being without critical pairs, noetherian, confluent.
\end{enumerate}

This algorithm comprises an initialization phase followed by two nested loops. The first loop focuses on the weight of Lyndon words, and the second on the ordered set of Lyndon words at the same weight. The last yields equations on polyzetas, as in \cite{Bui}, and then transforms them to corresponding rewriting rules or, exclusively, irreducible terms. This algorithm is illustrated by Examples \ref{E1}--\ref{E0}, provides \eqref{RS} and precisely contains
\begin{enumerate}
\item The above increasing set  $\{\calL_{irr}^{\calX,\le p}\}_{p\ge2}$ and its image, $\{\calZ_{irr}^{\calX,\le p}\}_{p\ge2}$, by the restriction of the $\zeta$ polymorphism, described in \eqref{restriction} which is bijective.

\item The following set of homogeneous in weight polynomials (see Remarks \ref{weightX} and \ref{weightY}) being image by a section of $\zeta$ from $\{\zeta(Q_l)=0\}_{l\in\Lyn\calX\setminus\gDIV}$:
\begin{eqnarray}\label{Q_XQ_Y}
\calQ_{\calX}=\{Q_l\}_{l\in\Lyn\calX\setminus\gDIV}.
\end{eqnarray}

\item The shuffle or quasi-shuffle $\Q$-ideal $\calR_{\calX}$ inside $\ker\zeta$, generated by $\calQ_{\calX}$ as follows
\begin{eqnarray}\label{R_XR_Y}
\calR_X:=(\mathrm{span}_{\Q}\calQ_X,\shuffle,1_{X^*})\subseteq\ker\zeta
&\mbox{and}&
\calR_Y:=(\mathrm{span}_{\Q}\calQ_Y,\stuffle,1_{Y^*})\subseteq\ker\zeta.
\end{eqnarray}

\item By \eqref{Q_XQ_Y}, for any $l\in\Lyn^p\calX:=\{l\in\Lyn\calX|(l)=p\ge2\}$, any nonzero homogeneous in weight polynomial $Q_l=\Sigma_l-\Upsilon_l$ (resp. $Q_l=S_l-U_l$) is
\begin{enumerate}
\item led by $\Sigma_l$ (resp. $S_l$), which does not belong to $\Q[\calL_{irr}^{\calX,\le p}]$,

\item followed by $-\Upsilon_l$ (resp. $-U_l$), which is canonically represented in $\Q[\calL_{irr}^{\calX,\le p}]$.
\end{enumerate}
Then let $\Sigma_l\rightarrow\Upsilon_l$ and $S_l\rightarrow U_l$ be the rewriting rules, respectively, of
\begin{eqnarray}\label{Rirr}
\calR_{irr}^Y:=\{\Sigma_l\rightarrow\Upsilon_l\}_{l\in\Lyn Y\setminus\{y_1\}}
&\mbox{and}&\calR_{irr}^X:=\{S_l\rightarrow U_l\}_{l\in\Lyn X\setminus X}.
\end{eqnarray}	

\item On the other hand, denoting $\calL_{irr}^{\calX,\le p}:=\calL_{irr}^{\calX,2}\cup\cdots\cup\calL_{irr}^{\calX,p}$, for any $l\in\Lyn\calX\setminus\gDIV$, the following assertions are equivalent (see Example \ref{E0} below)
\begin{enumerate}
\item $Q_l=0$,
\item $\Sigma_l\in\calL_{irr}^{Y,\le p}$ (resp. $S_l\in\calL_{irr}^{X,\le p}$),
\item $\Sigma_l\rightarrow\Sigma_l$ (resp. $S_l\rightarrow S_l$).
\end{enumerate}
\end{enumerate}

\noindent
\fbox{%
\begin{minipage}{1\textwidth}
{\bf LocalCoordinateIdentification}\\
$\calZ_{irr}^{Y,\infty}:=\{\};\calL_{irr}^{Y,\infty}:=\{\};\calR_{irr}^{Y}:=\{\};\calQ_{Y}:=\{\}$;\\
$\calZ_{irr}^{X,\infty}:=\{\};\calL_{irr}^{X,\infty}:=\{\};\calR_{irr}^{X}:=\{\};\calQ_{X}:=\{\}$;\\
for $p$ ranges in $2,\ldots,\infty$ do\\
\phantom{Here}\hspace{3mm}for $l$ ranges in the totally ordered $\Lyn^p Y$ do\\
\phantom{Here}\hspace{6mm}$\lambda:=\pi_X(l)$;\\
\phantom{Here}\hspace{6mm}identify $\scal{Z_{\gamma}}{\Pi_l}$ in $Z_{\gamma}=B(y_1)\pi_YZ_{\shuffle}$ and $\scal{Z_{\shuffle}}{P_\lambda}$
in $Z_{\shuffle}=B(x_1)^{-1}\pi_XZ_{\gamma}$;\\
\phantom{Here}\hspace{6mm}by elimination, obtain equations on $\{\zeta(\Sigma_{l'})\}_{l'\in\Lyn^pY\atop l'\preceq l}$
and on $\{\zeta(S_{\lambda'})\}_{\lambda'\in\Lyn^pX\atop\lambda'\preceq\lambda}$;\\
\phantom{Here}\hspace{6mm}express\footnote{This step and the following ones are not yet been achieved by the implementation in \cite{Bui}.} these equations, led by $\zeta(\Sigma_l)$ and by $\zeta(S_\lambda)$, as rewriting rules;\\
\phantom{Here}\hspace{6mm}if $\zeta(\Sigma_l)\rightarrow\zeta(\Sigma_l)$ then $\calZ_{irr}^{Y,\infty}:=\calZ_{irr}^{Y,\infty}\cup\{\zeta(\Sigma_l)\}$
and $\calL_{irr}^{Y,\infty}:=\calL_{irr}^{Y,\infty}\cup\{\Sigma_l\}$\\
\phantom{Here}\hspace{9mm}else $\calR_{irr}^Y:=\calR_{irr}^Y\cup\{\Sigma_l\rightarrow\Upsilon_l\}$
 and $\calQ_Y:=\calQ_Y\cup\{\Sigma_l-\Upsilon_l\}$;\\
\phantom{Here}\hspace{6mm}if $\zeta(S_\lambda)\rightarrow\zeta(S_\lambda)$
then $\calZ_{irr}^{X,\infty}:=\calZ_{irr}^{X,\infty}\cup\{\zeta(S_\lambda)\}$
and $\calL_{irr}^{X,\infty}:=\calL_{irr}^{X,\infty}\cup\{S_\lambda\}$\\
\phantom{Here}\hspace{9mm}else $\calR_{irr}^X:=\calR_{irr}^X\cup\{S_\lambda\rightarrow U_\lambda\}$ and $\calQ_X:=\calQ_X\cup\{S_\lambda-U_\lambda\}$\\
\phantom{Here}\hspace{3mm}end\_for\\
end\_for
\end{minipage}
}

In the other words, the ordering over the set of Lyndon words $\Lyn\calX$ induces the orderings over $\Q$-algebraic bases $\{S_\lambda\}_{\lambda\in\Lyn X\setminus X}$ and $\{\Sigma_l\}_{l\in\Lyn Y\setminus\{y_1\}}$ (see Remarks \ref{ordered1} and \ref{ordered2}), over the set of irreducible terms $\calL^{\calX,\infty}_{irr}$ and the $\Q$-ideal $\calR_{\calX}$ inside $\ker\zeta$, and then over the set of rewriting rules $\calR_{irr}^{\calX}$.Moreover,
\begin{enumerate}
\item each irreducible term, belonging to $\calL^{X,\infty}_{irr}$ (resp. $\calL^{Y,\infty}_{irr}$), is an element of the $\Q$-algebraic basis $\{S_l\}_{l\in\Lyn X\setminus X}$ (resp. $\{\Sigma_l\}_{l\in\Lyn Y\setminus\{y_1\}}$) of the subalgebra given in \eqref{subalgebra1} (resp. \eqref{subalgebra2}),

\item each rewriting rule, belonging to $\calR^{\calX}_{irr}$, admits the left side doing not belong to the $\Q$-algebra $\Q[\calL_{irr}^{{\calX},\infty}]$ and the right side being canonically represented in the $\Q$-algebra $\Q[\calL_{irr}^{{\calX},\infty}]$. The difference of these two sides belongs to the totally ordered ideal $\calR_{\calX}$ of the $\Q$-algebra $\Q[\Lyn\calX\setminus\gDIV]$.
\end{enumerate}

\begin{example}[Rewriting among $\{\Sigma_l\}_{l\in\Lyn Y\setminus\{y_1\}}$
and $\{S_l\}_{l\in\Lyn X\setminus X}$, irreducible terms\footnote{There is some typographical errors in \cite{LT12,Tokyo}.} \cite{LT12,Tokyo}]\label{E0}
\small
\center
$\begin{array}{|c|rcl|rcl|}
		\hline
		&\mbox{Rewriting}&\text{on}&\{\Sigma_l\}_{l\in\Lyn Y\setminus\{y_1\}}&\mbox{Rewriting}&\text{on}&\{S_l\}_{\Lyn X\setminus X}\cr 
		\hline
		3&\Sigma_{y_2y_1}&\rightarrow&\frac{3}{2}\Sigma_{y_3}&S_{x_0x_1^2}&\rightarrow&S_{x_0^2x_1}\cr 
		\hline
		&\Sigma_{y_4}&\rightarrow&\frac{2}{5}\Sigma_{y_2}^{\stuffle 2}&S_{x_0^3x_1}&\rightarrow&\frac{2}{5}S_{x_0x_1}^{\shuffle 2}\cr               
		4&\Sigma_{y_3y_1}&\rightarrow&\frac{3}{10}\Sigma_{y_2}^{\stuffle 2}&S_{x_0^2x_1^2}&\rightarrow&\frac{1}{10}S_{x_0x_1}^{\shuffle 2}\cr  
		&\Sigma_{y_2y_1^2}&\rightarrow&\frac{2}{3}\Sigma_{y_2}^{\stuffle 2}&S_{x_0x_1^3}&\rightarrow&\frac{2}{5}S_{x_0x_1}^{\shuffle 2}\cr 
		\hline
		&\Sigma_{y_3y_2}&\rightarrow&3\Sigma_{y_3}\stuffle\Sigma_{y_2}-5\Sigma_{y_5}&S_{x_0^3x_1^2}&\rightarrow&-S_{x_0^2x_1}\shuffle S_{x_0x_1}+2S_{x_0^4x_1}\cr 
		&\Sigma_{y_4y_1}&\rightarrow&-\Sigma_{y_3}\stuffle\Sigma_{y_2}+\frac{5}{2}\Sigma_{y_5}&S_{x_0^2x_1x_0x_1}&\rightarrow&-\frac{3}{2}S_{x_0^4x_1}+S_{x_0^2x_1}\shuffle S_{x_0x_1}\cr   
		5&\Sigma_{y_2^2y_1}&\rightarrow&\frac{3}{2}\Sigma_{y_3}\stuffle\Sigma_{y_2}-\frac{25}{12}\Sigma_{y_5}&S_{x_0^2x_1^3}&\rightarrow&-S_{x_0^2x_1}\shuffle S_{x_0x_1}+2S_{x_0^4x_1}\cr 
		&\Sigma_{y_3y_1^2}&\rightarrow&\frac{5}{12}\Sigma_{y_5}&S_{x_0x_1x_0x_1^2}&\rightarrow&\frac{1}{2}S_{x_0^4x_1}\cr 
		&\Sigma_{y_2y_1^3}&\rightarrow&\frac{1}{4}\Sigma_{y_3}\stuffle\Sigma_{y_2}+\frac{5}{4}\Sigma_{y_5}&S_{x_0x_1^4}&\rightarrow&S_{x_0^4x_1}\cr 
		\hline
		&\Sigma_{y_6}&\rightarrow&\frac{8}{35}\Sigma_{y_2}^{\stuffle 3}& S_{x_0^5x_1}&\rightarrow&\frac{8}{35}S_{x_0x_1}^{\shuffle 3}\cr 
		&\Sigma_{y_4y_2}&\rightarrow&\Sigma_{y_3}^{\stuffle 2}-\frac{4}{21}\Sigma_{y_2}^{\stuffle 3}& S_{x_0^4x_1^2}&\rightarrow&\frac{6}{35}S_{x_0x_1}^{\shuffle 3}-\frac{1}{2}S_{x_0^2x_1}^{\shuffle 2}\cr 
		&\Sigma_{y_5y_1}&\rightarrow&\frac{2}{7}\Sigma_{y_2}^{\stuffle 3}-\frac{1}{2}\Sigma_{y_3}^{\stuffle 2}& S_{x_0^3x_1x_0x_1}&\rightarrow&\frac{4}{105}S_{x_0x_1}^{\shuffle 3} \cr 
		&\Sigma_{y_3y_1y_2}&\rightarrow&-\frac{17}{30}\Sigma_{y_2}^{\stuffle 3}+\frac{9}{4}\Sigma_{y_3}^{\stuffle 2}&S_{x_0^3x_1^3}&\rightarrow&\frac{23}{70}S_{x_0x_1}^{\shuffle 3}-S_{x_0^2x_1}^{\shuffle 2}\cr 
		&\Sigma_{y_3y_2y_1}&\rightarrow&3\Sigma_{y_3}^{\stuffle 2}-\frac{9}{10}\Sigma_{y_2}^{\stuffle 3}& S_{x_0^2x_1x_0x_1^2}&\rightarrow&\frac{2}{105}S_{x_0x_1}^{\shuffle 3}\cr 
		6&\Sigma_{y_4y_1^2}&\rightarrow&\frac{3}{10}\Sigma_{y_2}^{\stuffle 3}-\frac{3}{4}\Sigma_{y_3}^{\stuffle 2}&S_{x_0^2x_1^2x_0x_1}&\rightarrow&-\frac{89}{210}S_{x_0x_1}^{\shuffle 3}+\frac{3}{2}S_{x_0^2x_1}^{\shuffle 2} \cr 
		&\Sigma_{y_2^2y_1^2}&\rightarrow&\frac{11}{63}\Sigma_{y_2}^{\stuffle 3}-\frac{1}{4}\Sigma_{y_3}^{\stuffle 2}& S_{x_0^2x_1^4}&\rightarrow&\frac{6}{35}S_{x_0x_1}^{\shuffle 3}-\frac{1}{2}S_{x_0^2x_1}^{\shuffle 2}\cr 
		&\Sigma_{y_3y_1^3}&\rightarrow&\frac{1}{21}\Sigma_{y_2}^{\stuffle 3}&S_{x_0x_1x_0x_1^3}&\rightarrow&\frac{8}{21}S_{x_0x_1}^{\shuffle 3}-S_{x_0^2x_1}^{\shuffle 2} \cr 
		&\Sigma_{y_2y_1^4}&\rightarrow&\frac{17}{50}\Sigma_{y_2}^{\stuffle 3}+\frac{3}{16}\Sigma_{y_3}^{\stuffle 2}&S_{x_0x_1^5}&\rightarrow&\frac{8}{35}S_{x_0x_1}^{\shuffle 3}\cr 
		\hline
	\end{array}$
\begin{eqnarray*}
\calL_{irr}^{X,\le12}&=&\{S_{x_0x_1},S_{x_0^2x_1},S_{x_0^4x_1},S_{x_0^6x_1},S_{x_0x_1^2x_0x_1^4},S_{x_0^8x_1},S_{x_0x_1^2x_0x_1^6},S_{x_0^{10}x_1},S_{x_0x_1^2x_0x_1^7},S_{x_0x_1^2x_0x_1^8},S_{x_0x_1^4x_0x_1^6}\}.\cr
\calL_{irr}^{Y,\le12}&=&\{\Sigma_{y_2},\Sigma_{y_3},\Sigma_{y_5},\Sigma_{y_7},\Sigma_{y_3y_1^5},\Sigma_{y_9},\Sigma_{y_3y_1^7},\Sigma_{y_{11}},\Sigma_{y_2y_1^9},\Sigma_{y_3y_1^9},\Sigma_{y_2^ 2y_1^8}\}.\cr
\end{eqnarray*}
\end{example}

With the notations introduced in \eqref{Zirr}--\eqref{Rirr}, let us return to some relevant results already mentioned in \cite{acta,VJM,CM}, for which the previous algorithm is the harvested fruit (briefly described in \cite{LT12,Tokyo}).

\begin{proposition}\label{directsum}
With the notation in \eqref{convenience}, one has
\begin{enumerate}
\item\label{un} $\mathrm{Im}\,\zeta=(\calZ,\times,1)\cong(\Q[{\calZ_{irr}^{\calX,\infty}}],\times,1)$ and $\calR_{\calX}=\ker\zeta$.

\item\label{deux} $\Q[\Lyn\calX\setminus\gDIV]/\calR_{\calX}\cong\Q[{\calL_{irr}^{\calX,\infty}}]$.

\item\label{trois} $\calZ_{irr}^{\calX,\le p}$ is $\Q$-algebraically independent, for $p\ge2$. So is $\calZ_{irr}^{\calX,\infty}$.

\item\label{quadre} Let $w$ in $CONV$. Then $\zeta(w)\in(\Q[{\calZ_{irr}^{\calX,\infty}}],\times,1)$.

\item\label{cinq} $\Q[\{S_l\}_{l\in\Lyn X\setminus X}]=\calR_X\oplus\Q[{\calL_{irr}^{X,\infty}}]$ and $\Q[\{\Sigma_l\}_{l\in\Lyn Y\setminus\{y_1\}}]=\calR_Y\oplus\Q[\calL_{irr}^{Y,\infty}]$.
\end{enumerate}
\end{proposition}

\begin{proof}
\begin{enumerate}
\item By Corollary \ref{zeta} and the constructions described in \eqref{restriction}--\eqref{R_XR_Y} (see also Remark \ref{bijection}), one obtains
\begin{eqnarray*}
\mathrm{Im}\,\zeta=(\calZ,\times,1)\cong(\Q[{\calZ_{irr}^{\calX,\infty}}],\times,1)
\end{eqnarray*}
and
\begin{eqnarray*}
\calR_{\calX}\subseteq\ker\zeta\subsetneq\Q[\Lyn\calX\setminus\gDIV].
\end{eqnarray*}

Now, if $Q\in\ker\zeta$ such that $\scal{Q}{1_{\calX^*}}=0$ then there is $Q_1\in{\mathcal R}_{\calX}$ and $Q_2\in\Q[\calL_{irr}^{\calX,\infty}]$ such that  $Q=Q_1+Q_2$. Thus, decomposing in the $\Q$-algebraic basis $\{S_l\}_{l\in\Lyn X\setminus X}$, or $\{\Sigma_l\}_{l\in\Lyn Y\setminus\{y_1\}}$, and then reducing by the rewriting rules in $\calR_{irr}^{\calX}$ and by hypothesis, one obtains.
\begin{eqnarray*}
Q\equiv_{\calR_{irr}^{\calX}}Q_1\in{\mathcal R}_{\calX}.
\end{eqnarray*}

Finally, using the restriction of the $\zeta$ polymorphism, described in \eqref{restriction} which is bijective, it follows then the expected result.

\item It is a consequence of Item \ref{un}.

\item Extracted from the $\Q$-algebraic basis $\{S_l\}_{l\in\Lyn X}$, or $\{\Sigma_l\}_{l\Lyn Y}$, the family $\calL_{irr}^{\calX,\le p}$ is $\Q$-algebraically independent. By the restriction of the $\zeta$ polymorphism, described in \eqref{restriction} which is bijective, the family $\calZ_{irr}^{\calX,\le p}$ is $\Q$-algebraically independent.

The proof is similar for $\calZ_{irr}^{\calX,\infty}$.

\item Since $\{S_l\}_{l\in\Lyn X\setminus X}$ (resp. $\{\Sigma_l\}_{l\in\Lyn Y\setminus\{y_1\}}$) is a pure transcendent basis of the $\Q$-algebra $(\Q1_{X^*}\oplus x_0\ncp{\Q}{X}x_1,\shuffle,1_{X^*})$ (resp. $(\Q1_{Y^*}\oplus(Y\setminus\{y_1\})\ncp{\Q}{Y},\stuffle,1_{Y^*})$) then, for any word $w$ in $x_0X^*x_1$ (resp. $(Y\setminus\{y_1\})Y^*$, after a decomposition in the $\Q$-algebraic basis $\{S_l\}_{l\in\Lyn X\setminus X}$ (resp. $\{\Sigma_l\}_{l\in\Lyn Y\setminus\{y_1\}}$), a reduction of $w$ by the rewriting rules in $\calR_{irr}^{\calX}$ is a polynomial belonging to $\Q[\calL_{irr}^{\calX,\infty}]$ and then, by the restriction of the $\zeta$ polymorphism, described in \eqref{restriction} which is bijective, the polyzeta $\zeta(w)$ belongs to $(\Q[\calZ_{irr}^{\calX,\infty}],\times,1)$.

\item Let $P\in(\Q[\{S_l\}_{l\in\Lyn X\setminus X}],\shuffle,1_{X^*})$, or $(\Q[\{\Sigma_l\}_{l\in\Lyn Y\setminus\{y_1\}}],\stuffle,1_{Y^*})$, such that $P\notin\ker\zeta\supseteq{\calR}_{\calX}$. Then, by Item \ref{deux} and by linearity, $\zeta(P)\in(\Q[\calZ_{irr}^{\calX,\infty}],\times,1)$. On the other hand, if $Q\in{\calR}_{\calX}\cap\Q[\calL_{irr}^{\calX,\infty}]$ then, by \eqref{R_XR_Y}, $\zeta(Q)=0$ and then, by \eqref{restriction} again, one obtains $Q=0$ leading to the expected result.
\end{enumerate}
\end{proof}

\begin{corollary}\label{transcendent}
The $\shuffle$-ideal $\calR_X$ (resp. $\stuffle$-ideal $\calR_Y$), obtaind by the main algorithm {\bf LocalCoordinateIdentification}, is isomorphic to the $\shuffle$-ideal (resp. $\stuffle$-ideal) $\Q$-alge\-braicly generated by the homogenous in weight polynomials $\{l_1\shuffle l_2-\pi_X(\pi_Y(l_1)\allowbreak\stuffle\allowbreak\pi_Y(l_2))\}_{l_1,l_2\in\Lyn X\setminus\{x_0\}}$ (resp. $\{l_1\stuffle l_2-\pi_Y(\pi_X(l_1)\shuffle\pi_X(l_2))\}_{l_1,l_2\in\Lyn Y}$).
\end{corollary}

\begin{proof}
In Corollary \ref{pont}, the identities $Z_{\gamma}=B(y_1)\pi_Y(Z_{\shuffle})$ and $Z_{\stuffle}=B'(y_1)\pi_Y(Z_{\shuffle})$, bridge the grouplike series $Z_{\shuffle}$ and $Z_{\gamma}$ (or $Z_{\stuffle}$), for $\Delta_{\shuffle}$ and $\Delta_{\shuffle}$, respectively.

On the one hand, by Proposition \ref{noyeaux}, the regularized double shuffle relations with simultaneous regularization to $0$ (see Remark \ref{regularizations}) are equivalent to the facts that $Z_{\shuffle}$ and $Z_{\stuffle}$ are grouplike. On the other hand, by Proposition \ref{directsum}, the $\shuffle$-ideal $\calR_X$ and $\stuffle$-ideal $\calR_Y$, as being the kernels of the $\zeta$ polymorphim, are furnished by these identities in Corollary \ref{pont}, via the main algorithm {\bf LocalCoordinateIdentification}. It follows then the expected results.

In fact, the factors $B'(y_1)$ (in $Z_{\stuffle}=B'(y_1)\pi_Y(Z_{\shuffle})$) and $B(y_1)$ (in $Z_{\gamma}=B(y_1)\pi_Y(Z_{\shuffle})$) are necessary to express, one the one hand, $\{\zeta_{\stuffle}(y_1^kw)\}_{k\ge1\atop w\in\CONV\cup\{1_{Y^*}\}}$ as following
\begin{eqnarray*}
\zeta_{\stuffle}(y_1^2)&=&-\frac12\zeta(2),\cr
\zeta_{\stuffle}(y_1^3)&=&\frac13\zeta(3),\cr
\zeta_{\stuffle}(y_1y_7)&=&\zeta(3)\zeta(5)-\frac{54}{175}\zeta(2)^4,\cr
\zeta_{\stuffle}(y_1^2y_6)&=&\frac{19}{35}\zeta(2)^4+\frac12\zeta(2)\zeta(3)^2+\zeta(6,2)-4\zeta(3)\zeta(5),
\end{eqnarray*}
and, on the other hand, $\{\gamma_{y_1^kw}\}_{k\ge1\atop w\in\CONV\cup\{1_{Y^*}\}}$ as in Example \ref{generalizedgamma} which cannot be obtained using the regularizations $\zeta^T_{\shuffle}$ and $\zeta^T_{\stuffle}$ described in the remark \ref{regularizations}, even with $T=\gamma$.
\end{proof}

Now, by the restriction of  the $\zeta$ polymorphism (described in \eqref{restriction} which is bijective), by Propositions \ref{noyeaux}--\ref{directsum} and Corollary \ref{transcendent}, we are in situation to define.

\begin{definition}\label{formalzetavalues}
\begin{enumerate}
\item The elements of $\Q[\calZ_{irr}^{\calX,\infty}]$ are called \textit{real polyzetas}.

\item The elements of $\Q[\calL_{irr}^{\calX,\infty}]$ are called \textit{formal polyzetas}. 
\end{enumerate}
\end{definition}

\begin{theorem}\label{graded}
The $\Q$-algebra $(\calZ,\times,1)$, being a subalgebra of $(\R,\times,1)$, is freely generated by $\calZ_{irr}^{\calX,\infty}$ and, as $\Q$-module,
\begin{eqnarray*}
\calZ=\Q1\oplus\bigoplus_{k\ge2}\calZ_k.
\end{eqnarray*}
\end{theorem}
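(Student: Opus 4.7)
The plan is to deduce the theorem essentially as a corollary of Proposition~\ref{directsum}, by observing that the whole apparatus of irreducible terms is homogeneous in weight and that the rewriting rules in $\calR^{\calX}_{irr}$ preserve the weight grading.

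For the first assertion (freeness), I would argue as follows. By Proposition~\ref{directsum}(1), the polymorphism $\zeta$ induces an isomorphism of $\Q$-algebras $(\calZ,\times,1)\cong\Q[\calZ^{\calX,\infty}_{irr}]$. By Proposition~\ref{directsum}(\ref{algebraicbases}), the family $\calZ^{\calX,\infty}_{irr}$ is $\Q$-algebraically independent. A polynomial algebra over an algebraically independent set \emph{is} the free commutative algebra on that set, so $(\calZ,\times,1)$ is freely generated by $\calZ^{\calX,\infty}_{irr}$.

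For the grading, the key structural fact I would invoke is that the $\{S_l\}_{l\in\Lyn X}$ (resp.\ $\{\Sigma_l\}_{l\in\Lyn Y}$) are homogeneous in weight by \eqref{triangular1} (resp.\ \eqref{triangular2}), so each element of $\calL^{\calX,\infty}_{irr}$ carries a well-defined weight, and the associated irreducible polyzeta $\zeta(S_l)$ (resp.\ $\zeta(\Sigma_l)$) lies in $\calZ_{|l|}$. Since every rewriting rule of $\calR^{\calX}_{irr}$ has the form $L\to R$ with $L$ and $R$ homogeneous of the same weight (the right side being the canonical representative of the left side modulo the weight-graded ideal $\calR_{\calX}$), weight is preserved under rewriting. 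Combining this with \eqref{prod}, any $\zeta(w)\in\calZ_k$ (with $w\in\CONV$ of weight $k$) reduces, after decomposition in the algebraic basis and application of $\calR^{\calX}_{irr}$, to a $\Q$-linear combination of monomials in $\calZ^{\calX,\infty}_{irr}$ whose total weight is exactly $k$. Hence $\calZ_k$ equals the $\Q$-span of such weight-$k$ monomials, and $\calZ=\Q1+\sum_{k\ge 2}\calZ_k$.

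It remains to show the sum is direct. Suppose $z_2+z_3+\cdots+z_N=0$ with $z_k\in\calZ_k$. Via the isomorphism $\calZ\cong\Q[\calZ^{\calX,\infty}_{irr}]$, each $z_k$ corresponds to a polynomial in the algebraically independent generators $\calZ^{\calX,\infty}_{irr}$ that is homogeneous of total degree $k$ with respect to the weight grading on the generators. Algebraic independence then forces each homogeneous component to vanish individually, giving $z_k=0$ for all $k$. This yields the direct sum decomposition $\calZ=\Q1\oplus\bigoplus_{k\ge 2}\calZ_k$.

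The only delicate point, and the one I would scrutinize most carefully, is the homogeneity of the rewriting rules: one must verify that the construction of $\calQ_{\calX}$ in \eqref{Q_XQ_Y} yields polynomials $Q_l=\Sigma_l-\Upsilon_l$ (resp.\ $S_l-U_l$) that are strictly homogeneous in weight, i.e.\ that $\Upsilon_l$ (resp.\ $U_l$) is a combination of weight-$|l|$ monomials in $\calL^{\calX,\le |l|}_{irr}$. This is guaranteed by the fact that the polynomial relations obtained by identifying local coordinates in $Z_{\gamma}=B(y_1)\pi_Y(Z_{\shuffle})$ and $Z_{\shuffle}=B(x_1)^{-1}\pi_X Z_{\gamma}$ are themselves weight-homogeneous (as visible in Examples~\ref{E1} and~\ref{E0}), since $B(y_1)$ and $B(x_1)$ are weight-graded exponentials; once this observation is recorded the grading part is formal.
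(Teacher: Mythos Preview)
Your proposal is correct and follows essentially the same approach as the paper: freeness comes directly from Proposition~\ref{directsum}, and the grading follows from the weight-homogeneity of the generators $\{Q_l\}_{l\in\Lyn\calX\setminus\gDIV}$ of $\ker\zeta$. The paper phrases the grading argument more abstractly --- since $\ker\zeta$ is a homogeneous ideal, the quotient $\Q1_{\calX^*}\oplus(\CONV\mbox{-part})/\ker\zeta\cong\mathrm{Im}\,\zeta=\calZ$ inherits the grading --- whereas you unpack this concretely via the rewriting system and the algebraic independence of $\calZ^{\calX,\infty}_{irr}$, but the content is the same.
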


\begin{proof}
By Proposition \ref{directsum}, the $\Q$-algebra $(\calZ,\times,1)$ is freely generated by $\calZ_{irr}^{\calX,\infty}$ and $\ker\zeta$, being generated by the homogeneous in weight polynomials $\{Q_l\}_{l\in\Lyn\calX\setminus\gDIV}$, is graded. With the notations in Conjecture \ref{Zconj}, $\calZ$ is also graded, as being $\mathrm{Im}\,\zeta$ which is isomorphic to $(\Q1_{Y^*}\oplus(Y\setminus\{y_1\})\ncp{\Q}{Y})/\ker\zeta$ and to $(\Q1_{X^*}\oplus x_0\ncp{\Q}{X}x_1/\ker\zeta$.
\end{proof}

\begin{corollary}\label{transcendent}
One has
\begin{enumerate}
\item Let $P\in\ncp{\Q}{\calX}$ and $P\notin\ker\zeta$, homogeneous in weight, or $P\in\CONV$. Then $\zeta(P)$ is a transcendental number over $\Q$.

\item Let $P\in\calL_{irr}^{\calX,\infty}$. Then $\zeta(P)$ is a transcendental number over $\Q$.

\item ${\zeta(2q+1)}/\pi^{2p}\notin\Q$ and $\zeta(2p+1)/\zeta(2q+1)\notin\Q$, for $p,q\ge1,p\neq q$.
\end{enumerate}
\end{corollary}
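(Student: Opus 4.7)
The plan is to reduce all three items to the $\Q$-algebraic independence of $\calZ_{irr}^{\calX,\infty}$ established in Proposition \ref{directsum}(2), which via the bijection \eqref{restriction} identifies $(\calZ,\times,1)$ with the polynomial $\Q$-algebra $\Q[\calZ_{irr}^{\calX,\infty}]$. Being a purely transcendental $\Q$-algebra, $\calZ$ sits inside a purely transcendental extension of $\Q$, so the algebraic closure of $\Q$ inside $\calZ$ is $\Q$ itself. Consequently every element of $\calZ\setminus\Q$ is transcendental over $\Q$, and the whole corollary reduces to checking that the relevant element of $\calZ$ is not rational.

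Item (2) is then immediate: for $P\in\calL_{irr}^{\calX,\infty}$, the bijective restriction \eqref{restriction} sends $P$ to $\zeta(P)\in\calZ_{irr}^{\calX,\infty}$, which belongs to a $\Q$-algebraically independent set and in particular is not rational. For item (1), I first absorb the alternative $P\in\CONV$ into the homogeneous one: a single convergent word is homogeneous of weight $w\ge2$, and $\zeta(P)>0$ is a sum of positive rationals, so $P\notin\ker\zeta$. In both situations $P$ is homogeneous of some weight $w\ge2$ and $P\notin\ker\zeta$, and by \eqref{subalgebra1}--\eqref{subalgebra2} lies in $\Q[\{S_l\}_{l\in\Lyn X\setminus X}]$ or $\Q[\{\Sigma_l\}_{l\in\Lyn Y\setminus\{y_1\}}]$. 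Apply Proposition \ref{directsum}(3) to decompose $P=P_1+P_2$ with $P_1\in\calR_{\calX}$ and $P_2\in\Q[\calL_{irr}^{\calX,\infty}]$. Since $\calR_{\calX}$ is generated by the homogeneous polynomials $\{Q_l\}_{l\in\Lyn\calX\setminus\gDIV}$, the decomposition is grading-preserving, so $P_2$ is itself homogeneous of weight $w$. By Proposition \ref{directsum}(1), $\calR_{\calX}=\ker\zeta$, so $\zeta(P)=\zeta(P_2)$; the hypothesis $P\notin\ker\zeta$ forces $P_2\ne0$. Because every element of $\calL_{irr}^{\calX,\infty}$ has weight $\ge2$, a nonzero homogeneous polynomial of weight $w\ge2$ in these generators has no constant term, so $P_2\in\Q[\calL_{irr}^{\calX,\infty}]\setminus\Q$ and hence $\zeta(P)\in\calZ\setminus\Q$ is transcendental by the principle of the first paragraph.

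For item (3), use $\pi^2=6\zeta(2)=6\zeta(\Sigma_{y_2})$ together with $\zeta(\Sigma_{y_2}),\zeta(\Sigma_{y_{2q+1}})\in\calZ_{irr}^{Y,\infty}$. A rational equality $\zeta(2q+1)=r\pi^{2p}$ with $r\in\Q$ would translate into the nontrivial polynomial relation $\zeta(\Sigma_{y_{2q+1}})-6^pr\,\zeta(\Sigma_{y_2})^p=0$ among members of $\calZ_{irr}^{Y,\infty}$, contradicting Proposition \ref{directsum}(2). The same reasoning applied to $\zeta(\Sigma_{y_{2p+1}})-r\,\zeta(\Sigma_{y_{2q+1}})=0$ for $p\ne q$ rules out $\zeta(2p+1)/\zeta(2q+1)\in\Q$ as a nontrivial $\Q$-linear relation between distinct members of $\calZ_{irr}^{Y,\infty}$.

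The arguments above are essentially formal once Proposition \ref{directsum} is in hand. The genuine obstacle is the prerequisite that $\Sigma_{y_2}$ and each $\Sigma_{y_{2q+1}}$ actually belong to $\calL_{irr}^{Y,\infty}$, i.e., are never the leading term of a rewriting rule in $\calR_{irr}^Y$. This is the output of \textbf{LocalCoordinateIdentification} on odd-weight Lyndon words, confirmed in the tabulations of Example \ref{E}, and is the point at which the symbolic apparatus developed in Section \ref{Algorithm} feeds directly into the transcendence conclusion.
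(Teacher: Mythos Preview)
Your treatment of items (1) and (2) is correct but takes a slightly different route from the paper. The paper argues directly from the grading of Theorem \ref{graded}: since $\zeta(P)\in\calZ_w$ with $w\ge2$, the powers $(\zeta(P))^k\in\calZ_{kw}$ lie in pairwise distinct graded pieces and are therefore $\Q$-linearly independent, ruling out any polynomial relation over $\Q$. You instead invoke the polynomial-algebra structure $\calZ\cong\Q[\calZ_{irr}^{\calX,\infty}]$ to conclude that the only elements of $\calZ$ algebraic over $\Q$ are the rationals, and then check $\zeta(P)\notin\Q$ via the decomposition of Proposition \ref{directsum}(3). Both arguments rest on Proposition \ref{directsum}; yours is marginally more general (it covers any $\zeta(P)\in\calZ\setminus\Q$, homogeneous or not) at the cost of the extra decomposition step.

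Item (3), however, has a genuine gap. Your argument requires $\Sigma_{y_{2q+1}}\in\calL_{irr}^{Y,\infty}$ for \emph{every} $q\ge1$, but the paper establishes this only for $1\le q\le5$ (Example \ref{E}) and explicitly leaves the general case open in the closing paragraph of Section \ref{Algorithm} (``It could remain that $\{\zeta(2n+1)\}_{n>5}$ \ldots\ could be irreducible''); indeed Corollary \ref{independence}(2) states the analogous independence of $\pi$ from the odd zeta values only \emph{conditionally} on this hypothesis. So your appeal to Example \ref{E} does not cover the full range of $p,q\ge1$. The paper's own proof of item (3) sidesteps the issue entirely: it uses only that $\zeta(2q+1)\in\calZ_{2q+1}$, that $\pi^{2p}\in\calZ_{2p}$ (via the rationality of $\zeta(2p)/\pi^{2p}$), and that $\zeta(2p+1)\in\calZ_{2p+1}$; these live in pairwise distinct summands of the direct-sum decomposition of Theorem \ref{graded}, so no nontrivial $\Q$-linear relation can hold among them. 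You should replace your irreducibility-based argument for item (3) with this grading argument.
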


\begin{proof}
\begin{enumerate}
\item By \eqref{prod}, each monomial $(\zeta(P))^k$ ($k\ge1)$ is of different weight and then, by Theorem \ref{graded}, $\zeta(P)$ could not satisfy an algebraic equation $T^k+a_{k-1}T^{k-1}+\ldots=0$ with coefficients in $\Q$, \textit{i.e.} the expected result.

\item Any $P\in\calL_{irr}^{\calX,\infty}$ is homogeneous in weight and then, by the previous item, $\zeta(P)$ is a transcendental number over $\Q$.

\item $\zeta(2q+1)\in\calZ_{2q+1}$ and $\zeta(2p+1)\in\calZ_{2p+1}$ ($p\neq q$) and, by the rationality of $\zeta(2n)/\pi^{2n}$, one has $\pi^{2n}\in\calZ_{2p}$. By Theorem \ref{graded}, \textit{i.e.} there is no $\Q$-linear relation among elements of different component, it follows then the expected results.
\end{enumerate}
\end{proof}

\begin{corollary}\label{independence}
Let $\calP$ (resp. $\calQ$) be the $\Q$-algebra generated by $\calZ_{irr}^{\calX,\infty}\setminus\{\zeta(2)\}$ (resp. $\{\zeta(2q+1)\}_{q\ge1}$). Then
\begin{enumerate}
\item $\pi$ is transcendental over $\calP$.

\item If $\{\zeta(2q+1)\}_{q\ge1}\subset\calZ_{irr}^{\calX,\infty}$ then $\pi$ is transcendental over $\calQ$.
\end{enumerate}
\end{corollary}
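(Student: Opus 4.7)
The plan is to reduce both assertions to Euler's classical identity $\pi^{2}=6\,\zeta(2)$ together with the $\Q$-algebraic freeness of $\calZ_{irr}^{\calX,\infty}$, already established in Proposition \ref{directsum}, item \ref{algebraicbases}. Since $S_{x_0x_1}\in\calL_{irr}^{X,\le 2}$ (equivalently $\Sigma_{y_2}\in\calL_{irr}^{Y,\le 2}$), one has $\zeta(2)\in\calZ_{irr}^{\calX,\infty}$, so $\zeta(2)$ is one of the free algebraic generators of $(\calZ,\times,1)$.

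For item $(1)$, writing $\calZ_{irr}^{\calX,\infty}=\{\zeta(2)\}\sqcup(\calZ_{irr}^{\calX,\infty}\setminus\{\zeta(2)\})$ and using the $\Q$-algebraic independence of $\calZ_{irr}^{\calX,\infty}$, the element $\zeta(2)$ is transcendent over the $\Q$-subalgebra $\calP$ generated by the complementary family. By Euler's identity $\pi^2=6\zeta(2)$, it follows that $\pi^2$ is transcendent over $\calP$. Now if, by contradiction, $\pi$ were algebraic over $\calP$, with some minimal relation $\pi^n+a_{n-1}\pi^{n-1}+\dots+a_0=0$, $a_i\in\calP$, then $\calP[\pi]$ would be a finite $\calP$-module and consequently its subring $\calP[\pi^2]$ would also be finite over $\calP$, forcing $\pi^2$ to be algebraic over $\calP$ --- a contradiction. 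Hence $\pi$ is transcendent over $\calP$.

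For item $(2)$, under the hypothesis $\{\zeta(2q+1)\}_{q\ge1}\subset\calZ_{irr}^{\calX,\infty}$, the element $\zeta(2)$ still belongs to $\calZ_{irr}^{\calX,\infty}$ and is distinct from every $\zeta(2q+1)$; again by the $\Q$-algebraic freeness provided by Proposition \ref{directsum}, item \ref{algebraicbases}, $\zeta(2)$ is transcendent over $\calQ$, hence so is $\pi^{2}=6\,\zeta(2)$, and repeating verbatim the argument used for item $(1)$ yields the transcendence of $\pi$ over $\calQ$.

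There is essentially no obstacle here: the substantive work is entirely absorbed by Proposition \ref{directsum} (whose proof reduces the kernel of $\zeta$ to the ideal generated by the polynomials $\{Q_l\}_{l\in\Lyn\calX\setminus\gDIV}$ produced by the algorithm \textbf{LocalCoordinateIdentification}). The only care needed in the write-up is the standard, elementary step that transcendence of $\pi^2$ over a $\Q$-algebra implies transcendence of $\pi$, which follows from the inclusion of rings $\calP\subset\calP[\pi^2]\subset\calP[\pi]$ together with the fact that $\calP[\pi]$ is a free $\calP[\pi^2]$-module of rank at most $2$.
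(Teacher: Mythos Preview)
Your proof is correct and follows essentially the same strategy as the paper: first reduce to the transcendence of $\pi^{2}$ over $\calP$ (resp.\ $\calQ$) via Euler's identity $\pi^{2}=6\,\zeta(2)$ together with the algebraic independence of $\calZ_{irr}^{\calX,\infty}$ from Proposition~\ref{directsum}, item~\ref{algebraicbases}, and then pass from $\pi^{2}$ to $\pi$. The only cosmetic difference lies in this last step: the paper uses the symmetrization trick $R(T):=P(T)P(-T)$ (an even polynomial, hence a polynomial in $T^{2}$) to obtain directly an algebraic relation for $\pi^{2}$, whereas you invoke the standard finiteness argument via the tower $\calP\subset\calP[\pi^{2}]\subset\calP[\pi]$. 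Both arguments are elementary and equivalent; your treatment of item~(2) is in fact slightly more direct than the paper's, since you go straight to the algebraic independence of $\{\zeta(2)\}\cup\{\zeta(2q+1)\}_{q\ge1}$ inside $\calZ_{irr}^{\calX,\infty}$ without the detour through Corollary~\ref{transcendent}.
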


\begin{proof}
One has, on the one hand, as said above $\zeta(2)=\pi^2/6$ is irreducible (see also Example \ref{E0}) and then, by Proposition \ref{directsum} (Item \ref{trois}), $\pi^2$ is transcendental over $\calP$. On the other hand, for any $p\ge1$, by Corollary \ref{transcendent},
\begin{eqnarray*}
\zeta(2p+1)\notin\bigoplus_{n\ge1}\Q[\pi^{2n}]
\end{eqnarray*}
meaning that $\zeta(2p+1)$ is a transcendental over $\Q[\pi^2]$ and then, by hypothesis and by Proposition \ref{directsum} (Item \ref{trois}), $\pi^2$ is transcendental over $\calQ$.

Hence, if there exists $P\in\calP[T]$ (resp. $\calQ[T]$) such that $P\neq 0$ and $P(\pi)=0$ then let $R\in\calP[T]$ be defined by
$$R(T):=P(T)P(-T).$$
It follows that $R(T)=R(-T)$ and, by hypothesis, $R(\pi)=0$ meaning that $\pi^2$ satisfies an algebraic equation with coefficients in $\calP$ (resp. $\calQ$) contradicting with two previous assertions. Thus, such $P$ does not exist and then $\pi$ is transcendental over $\calP$ (resp. $\calQ$).
\end{proof}

Ending this note, let $l\in\Lyn\calX$ such that $(l)=n>1$. Then
\begin{eqnarray}
\Sigma_l\succ\Sigma_{y_n}=y_n&\mbox{and}&S_l\succ S_{x_0^{n-1}x_1}=x_0^{n-1}x_1.
\end{eqnarray}
But unfortunately
\begin{eqnarray}
\{\Sigma_{y_n}\}_{n>1}\not\subset\calL_{irr}^{Y,\infty}&\mbox{and}&
\{S_{x_0^{n-1}x_1}\}_{n>1}\not\subset\calL_{irr}^{X,\infty}.
\end{eqnarray}

Indeed, $\zeta(\Sigma_{y_2})=\zeta(S_{x_0x_1})=\zeta(2)=\pi^2/6$ is irreducible then so are $\Sigma_{y_2}$ and $S_{x_0x_1}$. The rationality of $\zeta(2n)/\pi^{2n}$ (see \eqref{paire}) means that $\Sigma_{y_{2n}}=y_{2n}$ and $S_{x_0^{2n-1}x_1}=x_0^{2n-1}x_1$ are not irreducible, for $n>1$.
On the other hand, $\{\zeta(2n+1)\}_{1\le n\le 5}$, or equivalently $\{\Sigma_{2n+1}\}_{1\le n\le 5}$ and $\{S_{x_0^{2n}x_1}\}_{1\le n\le 5}$ , are irreducible (see Examples \ref{E1}--\ref{E0}). It could remain that $\{\zeta(2n+1)\}_{n>5}$, or equivalently $\{\Sigma_{2n+1}\}_{n>5}$ and $\{S_{x_0^{2n}x_1}\}_{n>5}$, could be irreducible.

\section{Conclusion}
Thanks to an Abel like theorem (see Theorem \ref{renormalization1}) and the equations bridging the algebraic structures (see Corollary \ref{pont}) of the $\Q$-algebra $(\calZ,\times,1)$, being a subalgebra of $(\R,\times,1)$, generated by the polyzetas $\{\zeta(w)\}_{w\in\CONV}$ (see the notations in \eqref{convenience}), the $\Q$-algebraic relations among the local coordinates (of second kind on the groups of grouplike series) of the grouplike series $Z_{\shuffle}$ (\textit{i.e.} among the polyzetas $\{\zeta(S_l)\}_{l\in\Lyn X\setminus X}$) and of $Z_{\stuffle}$ (\textit{i.e.} among the polyzetas $\{\zeta(\Sigma_l)\}_{l\in\Lyn Y\setminus\{y_1\}}$) are provided by the main algorithm {\bf LocalCoordinateIdentification}.

These $\Q$-algebraic relations are interpreted as two confluent, without critical pairs rewriting systems $(\Q1_{Y^*}\oplus(Y\setminus\{y_1\})\ncp{\Q}{Y},\calR^Y_{irr})$ and $(\Q1_{X^*}\oplus x_0\ncp{\Q}{X}x_1,\calR^X_{irr})$ admitting $\calL_{irr}^{\calX,\infty}$ as the set of irreducible terms, in which the irreducible terms represent the $\Q$-algebraic generators for $(\calZ,\times,1)$ and, on the other hand, the $\shuffle$-ideal $\calR_X$ and the $\stuffle$-ideal $\calR_Y$ represent the kernels of the $\zeta$ polymorphism (see Proposition \ref{directsum}).

These $\Q$-ideals are generated by the polynomials, totally ordered and homogeneous in weight, $\{Q_l\}_{l\in\Lyn\calX\setminus\gDIV}$, and are interpreted as the confluent rewriting systems in which the irreducible terms belong to $\calL_{irr}^{\calX,\infty}$ and, in each rewriting rule of $\calR_{irr}^{\calX}$, the left side is the leading monomial of $Q_l$ ($l\in\Lyn\calX\setminus\gDIV$) and does not belong to $\Q[\calL_{irr}^{\calX,\infty}]$ while the right side is canonically represented on the $\Q$-algebra $\Q[\calL_{irr}^{\calX,\infty}]$.

It follows that the algebra $(\Q[\{\zeta(l)\}_{l\in\calL_{irr}^{\calX,\infty}}],\times,1)$, \textit{i.e.} $(\calZ,\times,1)$, is isomorphic to the quotien $(\Q1_{X^*}\oplus x_0\ncp{\Q}{X}x_1,\shuffle,1_{X^*})/\calR_X$, or $(\Q1_{Y^*}\oplus(Y\setminus\{y_1\})\ncp{\Q}{Y},\stuffle,1_{Y^*})/\calR_Y$. It is $\Q$-free and graded (see Theorem \ref{graded}) and then the irreducible polyzetas, belonging to $\calZ_{irr}^{\calX,\infty}$ and being $\Q$-algebraic free, are transcendental numbers (see Corollary \ref{transcendent}).

Up to weight $12$, by these results, Conjecture \ref{Zconj} holds, \textit{i.e.} $\calZ_{irr}^{\calX,\le12}$ is $\Q$-algebraically free (see Example \ref{E} and see \cite{VJM} for a short discussion, see also \cite{racinet,kaneko}\footnote{All these implementations base on the ``double shuffle relations" and provide linear relations.}). In particular, $\pi^2$ is $\Q$-algebraically free on $\{\zeta(2q+1)\}_{1\le q\le5}$ values. So is $\pi$ (see Corollary \ref{independence}).


\begin{thebibliography}{8}
\bibitem{apery}{R. Ap\'ery}.--
\textit{Irrationalit\'e de $\zeta(2)$ and $\zeta(3)$}, Ast\'erique, 61, pp. 11-13 (1979).
%
\bibitem{ballrivoal}{K. Ball, T. Rivoal}.--
\textit{Irrationalit\'e d'une infinit\'e de valeurs de la fonction zeta aux entiers impairs}, Inventiones Mathematicae, 146 (1) 193-207 (2001).
%
\bibitem{berstel}{J.~Berstel, C.~Reutenauer}.--
\textit{Rational series and their languages}, Spr.-Ver., 1988.
%
\bibitem{blumlein}{J. Bl\"umlein, D. J. Broadhurst, and J. A. M. Vermaseren}.--
\textit{The multiple zeta value data mine},
\textit{Computer Physics Communications}, 181(3), pp. 582-625, 2010.
%
\bibitem{borwein2}{D.~Borwein, J.M.~Borwein \& R.~Girgensohn}.--
\textit{Explicit evaluation of Euler sums}. Proc. Edin. Math. Soc., 38 (1995), pp. 277-294.
%
\bibitem{borwein4}{J.M.~Borwein, D.M.~Bradley \& D.J.~Broadhurst}.--
\textit{Evaluation of $k$-fold Euler/Zagier sums~: a compendium of results for arbitrary $k$}. El. J. Comb., (1997).
%
\bibitem{Lie7}{N. Bourbaki}.--
\textit{Groupes et alg\`ebres de Lie, Ch 7-8}, N. Bourbaki et Springer-Verlag Berlin Heidelberg 2006
%
\bibitem{Bui}{V.C. Bui, G.H.E. Duchamp, V. Hoang Ngoc Minh}.--
\textit{Structure of Polyzetas and Explicit Representation on Transcendence Bases of Shuffle and Stuffle Algebras}, J. Sym. Comp. 83 (2017).
	%
\bibitem{SLC74}{V.C. Bui, G.H.E. Duchamp, V. Hoang Ngoc Minh, Q.H. Ngo and C. Tollu}.--
\textit{(Pure) Transcendence Bases in $\varphi$-Deformed Shuffle Bialgebras},
74\`eme S\'em. Lotharingien de Comb. (2018).
%
\bibitem{PMB}{V.C. Bui, V. Hoang Ngoc Minh, Q.H. Ngo and V. Nguyen Dinh}.--
\textit{Families of eulerian functions involved in regularization of divergent polyzetas}, Publications Math\'ematiques de Besan\c{c}on, pp. 5-28 (2023).
%
\bibitem{LT12}{V.C. Bui, V. Hoang Ngoc Minh, Q.H. Ngo and V. Nguyen Dinh}.--
\textit{On The Kernels Of The Zeta Polymorphism}, proceeding of XV Int. Wo. \textit{Lie Theory and Its Applications in Physics} (2024).
%
\bibitem{brown}{F.~Brown}.--
\textit{Alg\`ebre de Hopf des multizetas motiviques}.
Rencontre Renormalisation, G\' eom\' etrie et Combinatoire, Paris, 25 et 26 Novembre, 2010.
%
\bibitem{cartier2}{P. Cartier}.--
\textit{Fonctions polylogarithmes, nombres polyzetas et groupes pro-unipotents}.--
S\'eminaire BOURBAKI, $53^{\mbox{\`eme}}$, $n^{\circ}885$, 2000-2001.
%
\bibitem{Cartier2}{P. Cartier}.--
\textit{A primer of Hopf algebras},
Frontiers in Number Theory, Physics, and Geometry II,
Springer, Berlin, Heidelberg, pp. 537-615 (2007).
%
\bibitem{Cartier_double}{P. Cartier}.--
	\textit{On the double zeta values}
	Advanced Studies in Pure Mathematics 63 (2012) Galois-Teichmiiller Theory and Arithmetic Geometry, pp. 91-119.
%
\bibitem{chen}{Chen, K.T.}.-- {\em Iterated path integrals} {\em Bull. Amer. Math. Soc.} {\bf  83} 831--879, 1977.
%
\bibitem{Daresbury}{C.~Costermans, V. Hoang Ngoc Minh}.--
\textit{Some Results \`a l'Abel Obtained by Use of Techniques \`a la Hopf},
Workshop on Global Integrability of Field Theories and Applications, Daresbury (UK), 1-3, November 2006.
%
\bibitem{JSC}{C. Costermans, V. Hoang Ngoc Minh}.--
\textit{Noncommutative algebra, multiple harmonic sums and applications in discrete probability}, J. Sym. Comp. (2009), 801-817.
%
\bibitem{delignegoncharov}{P.~Deligne \& A.B.~Goncharov}.--
{\em Groupes fondamentaux motiviques de Tate mixte}, 2003.
%
\bibitem{drinfeld2}{V.~Drinfel'd}--
\textit{On quasitriangular quasi-Hopf algebras and on a group that is closely connected with Gal($\bar{\Q}/\Q$)},
\textit{Leningrad Math. J.}, 4, 829-860, 1991.
%
\bibitem{ecalle}{J. Ecalle}.--
\textit{ARI/GARI, dimorphy and multiple zeta arithmetic: a first evaluation}, Journal de Théorie des Nombres de Bordeaux 15 (2003).
%
\bibitem{euler2}{L. Euler}.--
\textit{Meditationes circa singulare serierum genus}, Novi. Comm. Acad. Sci. Petropolitanae, 20 (1775), 140-186.
%
\bibitem{flajoletsalvy}{Ph.~Flajolet \& B.~Salvy}.--
\textit{Euler sums and contour integral representations},
	Experimental Macthemaics, 1998.
%
\bibitem{Furusho}{Furusho, H}.--
\textit{The multiple zeta value algebra and the stable derivation algebra},
Publ. Res. Inst. Math. Sci. Vol 39. no 4. (2003). 695-720.
%
\bibitem{racinet}{M. Espie, J.-C. Novelli, G. Racinet}.--
	\textit{Formal Computations About Multiple Zeta Values},
 in From Combinatorics to Dynamical Systems (Strasbourg, 2002), F. Fauvet and C. Mitschi (eds.),
IRMA Lect. Math. Theor. Phys. 3, de Gruyter, Berlin, 2003, pp. 1-16. 
%
\bibitem{flajo_sedg}{P. Flajolet, R. Sedgewick}.--
\textit{An Introduction to the Analysis of  Algorithms}, Addison-Wesley.
%
\bibitem{fliess1}{M.~Fliess}.--
\textit{Fonctionnelles causales non lin\'eaires et ind\'e\-ter\-mi\-n\'ees non commutatives},
Bull. Soc. Math. France, N$^{\circ}$109, 1981, pp. 3-40.
%
\bibitem{hoangjacoboussous}{V. Hoang Ngoc Minh, G.~Jacob, N.~Oussous}.--
\textit{Input/Output Behaviour of Nonlinear Control Systems~: Rational Approximations, Nilpotent structural Approximations},
in Analysis of controlled Dynamical Systems,
Progress in Sys.s and Cont. Th., Birkh\"auser, 1991, pp. 253-262.
%
\bibitem{SLC43}{Hoang Ngoc Minh, Jacob~G., N.E.~Oussous, M.~Petitot}.--
\textit{Aspects combinatoires des polylogarithmes et des sommes d'Euler-Zagier},
\textit{Journal Electronique du S\'eminaire Lotharingien de Combinatoire}, B43e, (2000).
%
\bibitem{FPSAC97}{V. Hoang Ngoc Minh, M. Petitot}.--
\textit{A Lyndon words, polylogarithms and the Riemann $\zeta$ function},
Proceedings of FPSAC'98, 1998.
%
\bibitem{FPSAC98}{V. Hoang Ngoc Minh, M. Petitot, J. Van der Hoeven}.--
\textit{Polylogarithms and Shuffle Algebra},
Discrete Math. 217 (2000), 273--292.
%
\bibitem{FPSAC99}{V. Hoang Ngoc Minh, M. Petitot, J. Van der Hoeven}.--
\textit{L'al\-g\` e\-bre des polylogarithms par les s\'eries
g\'en\'eratrices}, in the proceeding of 
FPSAC'99, Barcelone, Espagne, Juin 1999.
%
\bibitem{acta}{V. Hoang Ngoc Minh}.--
\textit{On a conjecture by Pierre Cartier about a group of associators},
Acta Math. Vietnamica (2013),  38, Issue 3,  339-398.
%
\bibitem{VJM}{V. Hoang Ngoc Minh}.--
\textit{Structure of polyzetas and Lyndon words}, 
Viet. Math. J. (2013),  41, Issue 4, 409-450.
%
\bibitem{CM}{V. Hoang Ngoc Minh}.--
\textit{On the solutions of universal differential equation with three singularities},
in Confluentes Mathematici, Tome 11 (2019) no. 2, p. 25-64.
%
\bibitem{Tokyo}{V. Hoang Ngoc Minh}.--
\textit{Algebraic (non) Relations among Polyzetas}, in proceeding of
10th Int. Symp. on Symbolic Computation in Software Science,
August 28-30, 2024, Tokyo, Japan.
%
\bibitem{Hoffman}{M. E. Hoffman}.--
\textit{Multiple harmonic series}, Pacific  J.  Math. 152, 275--290 (1992).
%
\bibitem{hoffman2}{Hoffman, M.}.--
{\em The algebra of multiple harmonic series}, \textit{J. of Alg.}, 1997.

\bibitem{IharaKanekoZagier}{K. Ihara, M. Kaneko, D. Zagier}.--
\textit{Derivation and double shuffle relations for multiple zeta values},
Compositio Math. 142, pp. 307–338 (2006).
%
\bibitem{kaneko}{M. Kaneko, M. Noro, and K. Tsurumaki}.--
\textit{On a conjecture for the dimension of the space of the multiple zeta values},
 in Software for Algebraic Geometry, M. Stillman et. al (eds.),
IMA Volumes in Mathematics and its Applications 148, Springer, New York, 2008, pp. 47-58. 
%
\bibitem{lewinlivre}{L. Lewin}
\textit{Polylogarithms and associated functions},
North Holland (1981).
%
\bibitem{Lindemann}{F. Lindemann},--
{\em \"Uber die Zalh $\pi$}, Annalen {\bf 20}, pp. 213-225, 1882.
%
\bibitem{lothaire}{M.~Lothaire}.--
\textit{Combinatorics on Words}, Enc. of math. and its App., Addison-Wesley, 1983.  
%
\bibitem{nielsenlivre}{N.~Nielsen}.--
\textit{Handbuch der theorie der gamma funktion}, 1906 by Teubner in Leipzig.
%
\bibitem{Terasoma}{T.~Terasoma}.--
{\em Mixed Tate Motives and Multiple Zeta Values}, Invent. Math. 149 (2002), 339-36.
%
\bibitem{radford}{D.E. Radford}.--
\textit{A natural ring basis for the shuffle algebra and an application to group schemes}, Journal of Algebra, 58:432--454, 1979.
%
\bibitem{rivoal}{T. Rivoal}.--
\textit{La fonction zeta de Riemann prend une infinit\'e de valeurs irrationnelles aux entiers impairs}, C. R. Acad. Sci. Paris, t. 331, Srie I, p. 267-270 (2000).
%
\bibitem{reutenauer}{C. Reutenauer}.--
\textit{Free Lie Algebras}, London Math. Soc. Monographs (1993).
%
\bibitem{Waldschmidt}{M. Waldschmidt}.--
\textit{Open diophantine problems}, Hilbert's Problems Today (2001).

\bibitem{zagier}{D. Zagier}.--
\textit{Values of zeta functions and their applications}, in ``First European Congress of Mathematics", vol. 2, Birkh\"auser, pp. 497-512, (1994).
%
\bibitem{zudilin}{W. Zudilin}.--
\textit{Arithmetic of linear forms involving odd zeta values}
Journal de th\'eorie des nombres de Bordeaux, Tome 16, pp. 251-291 (2004).
 \end{thebibliography}
\end{document}